\numberwithin{equation}{section}
\theoremstyle{plain}
\newtheorem{theorem}[equation]{Theorem}
\newtheorem{lemma}[equation]{Lemma}
\newtheorem{corollary}[equation]{Corollary}
\theoremstyle{definition}
\theoremstyle{remark}
\newtheorem{remark}[equation]{Remark}
\newcommand{\supp}{\operatorname{supp}}
\newcommand{\dist}{\operatorname{dist}}
\newcommand{\diam}{\operatorname{diam}}
\newcommand{\esssup}{\operatorname*{ess\,sup}}
\newcommand{\bR}{\mathbb R}
\newcommand\cD{\mathcal{D}}
\newcommand\cN{\mathcal{N}}
\newcommand\cQ{\mathcal{Q}}
\newcommand\cR{\mathcal{R}}
\newcommand\sB{\mathscr{B}}
\newcommand\sC{\mathscr{C}}
\newcommand\sL{\mathscr{L}}
\newcommand\sP{\mathscr{P}}
\newcommand\sV{\mathscr{V}}
\newcommand\sW{\mathscr{W}}
\providecommand{\ip}[1]{\langle#1\rangle}
\providecommand{\set}[1]{\{#1\}}
\providecommand{\Set}[1]{\left\{#1\right\}}
\providecommand{\bigset}[1]{\bigl\{#1\bigr\}}
\providecommand{\abs}[1]{\lvert#1\rvert}
\providecommand{\Abs}[1]{\left\lvert#1\right\rvert}
\providecommand{\bigabs}[1]{\bigl\lvert#1\bigr\rvert}
\providecommand{\norm}[1]{\lVert#1\rVert}
\providecommand{\tri}[1]{\lvert\!\lvert\!\lvert#1\rvert\!\rvert\!\rvert}
\renewcommand{\vec}[1]{\boldsymbol{#1}}
\renewcommand{\qedsymbol}{$\blacksquare$}
\newcommand{\traction}{\boldsymbol{\tau}}
\newcommand{\strain}{\varepsilon}
\begin{document}
\title{Heat kernel for the elliptic system of linear elasticity with boundary conditions}

\author{Justin Taylor\\Department of Mathematics\\Murray State University\\Murray, Kentucky  42071, USA
\and
Seick Kim\footnote{Seick Kim is supported by NRF Grant No. 2012-040411 and R31-10049 (WCU program).}\\Department of Computational Science and Engineering\\Yonsei University\\Seoul, 120-749, Korea
\and
Russell Brown\footnote{This work was partially supported by a grant from the Simons Foundation (\#195075 to Russell Brown).}\\Department of Mathematics\\University of Kentucky\\Lexington, Kentucky 40506, USA}
\maketitle

\abstract{We consider the elliptic system of linear elasticity with bounded measurable coefficients in a  domain where the second Korn inequality holds.
We construct heat kernel of the system subject to Dirichlet, Neumann, or mixed boundary condition under the assumption that weak solutions of the elliptic system are H\"older continuous in the interior.
Moreover, we show that if weak solutions of the mixed problem are H\"older continuous up to the boundary, then the corresponding heat kernel has a Gaussian bound.
In particular, if the domain is a two dimensional Lipschitz domain satisfying a corkscrew or non-tangential accessibility condition on the set where we specify Dirichlet boundary condition, then we show that the heat kernel has a Gaussian bound.
As an application, we construct Green's function for elliptic mixed problem in such a domain. 
}

\section{Introduction}
In a domain (i.e. a connected  open  set) $\Omega\subset \bR^n$ ($n \ge 2$), we consider the differential operator 
\begin{equation}		\label{eq1.01a}
L\vec u= \frac{\partial}{\partial x_\alpha} \left(A^{\alpha\beta}(x) \frac{\partial \vec u}{\partial x_\beta}\right),
\end{equation}
where $\vec u$ is a column vector with components $u^1,\ldots, u^n$, $A^{\alpha\beta}(x)$ are $n\times n$ matrices whose elements $a^{\alpha\beta}_{ij}(x)$ are bounded measurable functions which satisfy the symmetry condition
\begin{gather}
\label{eq1.02a}
a^{\alpha\beta}_{ij}(x)=a^{\beta\alpha}_{ji}(x)=a^{i \beta}_{\alpha j}(x),\\
\label{eq1.03a}
\frac{1}{4}\kappa_1 \bigabs{\vec \xi+\vec \xi^T}^2 \le a^{\alpha\beta}_{ij}(x) \xi^j_\beta \xi^i_\alpha \le \frac{1}{4}\kappa_2 \bigabs{\vec \xi+\vec \xi^T}^2,
\end{gather}
where $\vec \xi$ is an arbitrary $n\times n$ matrix with real entries $\xi^\alpha_i$, $\kappa_1, \kappa_2>0$, and $\abs{\vec \xi}=(\vec \xi,\vec \xi)^{1/2}$. 
Here and below, we will follow the convention that we sum  over repeated indices and notation $(\vec \xi,\vec \eta)=\xi_\alpha^i \eta_\alpha^{i}$
for $n\times n$ matrices $\vec \xi=(\xi_\alpha^i)$ and $\vec \eta=(\eta_\alpha^i)$.
The operator $L$ defined by \eqref{eq1.01a} can also be written in coordinate form as follows.
\[
(L \vec u)^i = \sum_{\alpha, \beta=1}^n \sum_{j=1}^n \frac{\partial}{\partial x_\alpha} \left(a^{\alpha\beta}_{ij}(x) \frac{\partial u^j}{\partial x_\beta}\right),\quad i=1,\ldots, n.
\]
Our assumptions on the coefficients $ a ^ { \alpha \beta } _ { ij}$ will include the equations of linear elasticity and in this case, 
the coefficients $a^{\alpha\beta}_{ij}(x)$ are usually referred to as the elasticity tensor; see e.g. \cite{Ciarlet, MH}.
In the classical theory of linear elasticity, the elasticity tensor for a homogeneous isotropic body is given by the formula
\[
a_{ij}^{\alpha\beta} = \lambda \delta_{i \alpha} \delta _{j \beta}+ \mu (\delta_{ij} \delta_{\alpha\beta} + \delta_{i \beta} \delta_{j \alpha}),
\]
where $\lambda>0$, $\mu>0$ are the Lam\'e constants, $\delta_{ij}$ is the Kronecker symbol.
In this case, the conditions \eqref{eq1.02a} and \eqref{eq1.03a} are satisfied with $\kappa_1=2\mu$ and $\kappa_2= 2\mu+n\lambda$.
We define the traction $\traction=\traction(\vec u)$ by the formula
\begin{equation}			\label{eq:traction}
\traction(\vec u)=\nu_\alpha  A^{\alpha\beta}(x) \frac{\partial \vec u}{\partial x_\beta},
\end{equation}
where $\vec \nu=(\nu_1,\ldots,\nu_n)$ is the unit outward normal to $\partial\Omega$.

We consider following boundary value problems, which are the ones most frequently considered in the theory of linear elasticity.

\begin{enumerate}[1.]
\item
Dirichlet (displacement) problem
\begin{equation}		\label{DP}\tag{DP}
L\vec u = \vec f\;\text{ in }\;\Omega,\quad \vec u =\vec \Phi\;\text{ on }\;\partial\Omega.
\end{equation}
\item
Neumann (traction) problem
\begin{equation}		\label{NP}\tag{NP}
L\vec u = \vec f\;\text{ in }\;\Omega,\quad \traction(\vec u)=\vec \varphi \;\text{ on }\;\partial\Omega.
\end{equation}
\item
Mixed problem
\begin{equation}		\label{MP}\tag{MP}
L\vec u = \vec f\;\text{ in }\;\Omega,\quad \vec u =\vec \Phi\;\text{ on }\;D,\quad \traction(\vec u) =\vec \varphi\;\text{ on }\;N,
\end{equation}
where $D$ and $ N$ are subsets of $\partial\Omega$ such that $D \cup N=\partial \Omega$ and $ D\cap N= \emptyset$.
\end{enumerate}
In the above, the equation as well as the boundary condition should be interpreted in  a weak sense; see Section~\ref{sec2} for a precise formulation.

In this article, we are concerned with the heat kernel  associated with the mixed problem \eqref{MP}.
By allowing $D=\partial\Omega, N=\emptyset$ and $D=\emptyset, N=\partial\Omega$ in \eqref{MP}, we may regard \eqref{DP} and \eqref{NP} as extreme cases of \eqref{MP} and this is why we focus on \eqref{MP}.
By the heat kernel for \eqref{MP}, we mean an $n\times n$ matrix valued function $\vec K(x,y,t)$ satisfying
\begin{equation}			\label{HK}
\left\{
\begin{array}{cl}
\frac{\partial}{\partial t} \vec K(x,y,t)-  L_x \vec K(x,y,t) = 0 \quad & \text{in }\; \Omega \times (0,\infty),\\
\vec K(x,y,t)=0 & \text{on }\;  D \times (0,\infty),\\
\traction_x(\vec K(x,y,t))=0 & \text{on }\;  N \times (0,\infty),\\
\vec K(x,y,0) = \delta_y \vec I &  \text{on }\; \Omega,
\end{array}
\right. 
\end{equation}
where $\delta_{y}(\cdot)$ is Dirac delta function concentrated at $y$,  $\vec I$ is the $n\times n$ identity matrix, and the equation as well as the boundary condition should be interpreted in some weak sense.
The heat kernels for \eqref{DP} and \eqref{NP} are similarly defined and they are frequently referred to as Dirichlet and Neumann heat kernels.

We assume that $\Omega$ is a bounded $(\epsilon,\delta)$-domain of Jones \cite{Jones} and if $D\neq \emptyset$ and $D\neq \partial\Omega$, we assume further that it has a Lipschitz portion; if $D=\partial\Omega$, then we require none of these conditions (see H1 in Section~\ref{sec:bac}). 
We prove that if weak solutions of the system $L\vec u =0$ are locally H\"{o}lder continuous (see H2 in Section~\ref{sec:bac}), then the heat kernel for \eqref{MP} exists and satisfies a natural growth estimate near the pole; see Theorem~\ref{thm1}.
It is known that weak solutions of $L$ are H\"{o}lder continuous if $n=2$ or if the coefficients are uniformly continuous.
We also prove that if the gradient of weak solutions of the system $L \vec u=0$ satisfy the growth condition called \emph{Dirichlet property} (see H3 in Section~\ref{sec:bac}), then the heat kernel has a Gaussian upper bound; see Theorem~\ref{thm3}.  The 
Dirichlet property is known to hold in the case when $\Omega$ is a Lipschitz domain in $\bR^2$ and $D$ satisfies the \emph{corkscrew condition} (see \cite{TKB}) or when the coefficients and domains are sufficiently smooth and $\bar D\cap \bar N=\emptyset$ (see \cite{Fichera}).
As an application, we construct Green's function for the elliptic system from the heat kernel and in the presence of Dirichlet property, we show that the Green's function has the usual bound of $C\abs{x-y}^{2-n}$ (or logarithmic bound if $n=2$); see Theorem~\ref{thm4}.

A few remarks are in order.
The Dirichlet or Neumann heat kernels for elliptic equations are studied by many authors; see Davies \cite{Davies89}, Robinson \cite{Robinson}, Varopoulos et al. \cite{VSC}, and references therein. 
For the heat kernel for second-order elliptic operators in divergence form satisfying Robin-type boundary conditions, we mention Gesztesy et al. \cite{GMN}.
 Dirichlet and Neumann Green's functions for strongly  parabolic systems are studied in Cho et al. \cite{CDK, CDK2} and Choi and Kim \cite{CK2}.
We should mention that  our paper, though   technically more
involved, is an extension of  their method.
The elliptic  Green   function for \eqref{MP} in two dimensional domains is constructed in Taylor et al. \cite{TKB} by a different method not involving the heat kernel.
Previously, Taylor et al. \cite{TOB} give a construction of the Green function for a class of mixed problems for the Laplacian in a Lipschitz domain in dimensions two and higher.

In recent years, there has been increasing interest in the study of
elliptic equations under mixed boundary conditions from a variety of
viewpoints. 
Haller-Dintelmann et al. 
\cite{MR2551487}
consider H\"older continuity of solutions of a single equation with an
interest in applications in control. 
Mazzucato and Nistor \cite{MR2564468} study the mixed problem for the
elliptic system of elasticity in
polyhedral domains under mixed boundary conditions. 
Finally, a recent monograph of Maz'ya and Rossmann \cite{MR2641539}
treats mixed problems for systems in polyhedral domains. We refer to
the above works and their references for background on the study of
mixed problems for systems. 
It would be interesting to see if the well-posedness results in these
works can be used to obtain
fundamental estimates H2 and H3 that we use to obtain further
estimates for the heat kernel.

The organization of the paper is as follows. In Section~\ref{sec2}, we introduce some notation and definitions including the precise definition of the heat kernel for \eqref{MP}.
In Section~\ref{sec:main}, we state our main theorems (Theorems~\ref{thm1} and \ref{thm3}), which we briefly described above.
In Section~\ref{sec:apps}, we construct, as an application, the Green's function for \eqref{MP} and obtain the usual bounds.
We give the proofs for our main results in Section~\ref{sec:pf} and some technical lemmas are proved in Appendix.

\section{Preliminaries}	\label{sec2}
\subsection{Notation and definition}
Throughout the article, we let $\Omega$ denote a domain in $\bR^n$ and let $D$ and $N$ be fixed subsets of $\partial\Omega$ such that $D \cup N =\partial\Omega$ and $D \cap N = \emptyset$.
We use $X=(x,t)$ to denote a point in $\bR^{n+1}$; $x=(x_1,\ldots, x_n)$ will always be a point in $\bR^n$.
We also write $Y=(y,s)$, $X_0=(x_0,t_0)$, and reserve notation
\[
\hat Y=(y,0)=(y_1,\ldots,y_n,0).
\]
We define the parabolic distance in $\bR^{n+1}$  by
\[
\abs{X-Y}_{\sP}= \max(\abs{x-y}, \sqrt{\abs{t-s}}),
\]
where $\abs{\,\cdot\,}$ denotes the usual Euclidean norm, and write $\abs{X}_{\sP}=\abs{X-0}_{\sP}$.
We use the following notation for basic cylinders in $\bR^{n+1}$.
\begin{gather*}
Q(X,r)=\set{Y\in \bR^{n+1}\colon  \abs{Y-X}_{\sP}<r},\\
Q_{-}(X,r)=\set{Y=(y,s)\in \bR^{n+1}\colon \abs{Y-X}_{\sP}<r,\, s<t},\\
Q_{+}(X,r)=\set{Y=(y,s)\in \bR^{n+1}\colon \abs{Y-X}_{\sP}<r,\, s>t}.
\end{gather*}
We also use $B(x,r)=\set{y\in \bR^n\colon \abs{y-x}<r}$ to denote a
ball in $ \bR^n$.  For the vector valued function $\vec u=(u^1,\ldots,
u^n)^T$, we denote by $\strain(\vec u)$ (called the strain tensor) the matrix whose elements
are
\[
\strain_{ij}(\vec u)=\frac{1}{2}\left(\frac{\partial u^i}{\partial x_j}+\frac{\partial u^j}{\partial x_i}\right).
\]
We denote by $\cR$ the linear space of rigid displacements of $\bR^n$; i.e.
\begin{equation}		\label{spacer}
\cR=\set{\vec A x+\vec b \colon \text{$\vec A$ is real $n\times n$ skew-symmetric matrix and $\vec b \in \bR^n$}}
\end{equation}
Note that $\cR$ is a real vector space of dimension $N=n(n+1)/2$ and that
\begin{equation}		\label{l2ip}
\ip{\vec u, \vec v} = \int_\Omega \vec u\cdot \vec v \,dx=\int_\Omega \vec v^T \vec u \,dx
\end{equation}
defines an inner product on $\cR$.
Let us fix an orthonormal basis $\set{\vec \omega_{i}}_{i=1}^N$ in $\cR$ and define the projection operator $\pi_\cR \colon L^2(\Omega)^n\to \cR$ by
\begin{equation}			\label{pr}
\pi_\cR(\vec u)= \sum_{i=1}^N \ip{\vec u, \vec \omega_i} \vec \omega_i.
\end{equation}
The above formula still makes sense for $\vec u= \delta_y \vec e_k$, where $\vec e_k$ is the $k$th unit (column) vector in $\bR^n$.
For $y\in \Omega$, we denote by $\vec T_y=\vec T_y(x)$ an $n\times n$ matrix valued function such that
\begin{equation}			\label{eq2.04sj}
\vec T_y \vec e_k=
\begin{cases}  \pi_\cR(\delta_y \vec e_k) & \text{if $D=\emptyset$,}
\\
0 &\text{if $D \neq \emptyset$.}
\end{cases}
\end{equation}
Roughly speaking, $\vec T_y$ is an orthogonal projection of $\delta_y \vec I$ on $\cR$ if $D=\emptyset$ and $0$ otherwise.
We set
\[
\sB(\vec u,\vec v)= a^{\alpha\beta}_{ij} \frac{\partial u^j}{\partial x_\beta} \frac{\partial v^i}{\partial x_\alpha}.
\]
It follows from \eqref{eq1.02a} the form  $\sB$ is symmetric (i.e., $\sB(\vec u,\vec v)=\sB(\vec v,\vec u)$) and from \eqref{eq1.03a} that
\begin{equation}			\label{eq1.10b}
\kappa_2^{-1} \sB(\vec u,\vec u)\le \abs{\strain(\vec u)}^2 \le \kappa_1^{-1} \sB(\vec u,\vec u).
\end{equation}
It is easy to verify that \eqref{eq1.02a}, \eqref{eq1.03a} imply (see \cite[Lemma~3.1, p. 30]{OSY})
\[
a^{\alpha\beta}_{ij} \xi^j_\beta \eta^i_\alpha \le \frac{1}{4}\kappa_2 \abs{\vec \xi+\vec \xi^T}\, \abs{\vec \eta+\vec \eta^T} \le \kappa_2 \abs{\vec \xi}\abs{\vec \eta}.
\]
Finally, we denote $d_x=d(x)=\dist(x, \Omega^c)$ when $\Omega$ is clear from the context and  write $a\wedge b=\min(a,b)$ and $a \vee b=\max(a,b)$ for $a, b \in 
\bar \bR$.

\subsection{Function spaces}
We use the notation in Gilbarg \& Trudinger \cite{GT} for the standard functions spaces defined on $U\subset \bR^n$ such as $L^p(U)$, $W^{k,p}(U)$, $C^{k,\alpha}(\bar U)$, etc.
For $\Gamma\subset \partial U$, let $W^{1,2}(U; \Gamma)$ be the subspace obtained by taking the closure in $W^{1,2}(U)$ of smooth functions in $\bar U$ which vanish in a neighborhood of $\Gamma$.
Note that we have $W^{1,2}(U;\partial U)=W^{1,2}_0(U)$.
We shall denote
\[
\tilde{W}^{1,2}(U;\Gamma):= 
\begin{cases}
W^{1,2}(U;\Gamma) &\mbox{if } \Gamma \neq \emptyset \\
\Set{u\in W^{1,2}(U) \colon \int_U u\,dx=0 } & \mbox{if }  \Gamma=\emptyset. 
\end{cases} 
\]
For $\Omega$ and $D$ as above, we define
\begin{equation}		\label{eqv}
\vec V :=
\begin{cases}
W^{1,2}(\Omega; D)^n &\mbox{if } D \neq \emptyset \\
\bigset{\vec u\in W^{1,2}(\Omega)^n \colon \ip{\vec u, \vec v} =0,\;\; \forall \vec v \in \cR } & \mbox{if }  D=\emptyset,
\end{cases} 
\end{equation}
where $\cR$ and $\ip{\vec u,\vec v}$ are as in \eqref{spacer} and \eqref{l2ip}.
Notice that $\vec V \subset \tilde{W}^{1,2}(\Omega; D)^n$.
For $\mu \in (0,1]$, we denote
\[
\abs{u}_{\mu;U}=[u]_{\mu;U}+\abs{u}_{0;U}=\sup_{\substack{x, y \in U\\ x\neq y}}\frac{\abs{u(x)-u(y)}}{\abs{x-y}^\mu}+\sup_{x\in U}\,\abs{u(x)}.
\]

For spaces of functions defined on $Q\subset \bR^{n+1}$, we borrow notation mainly from Ladyzhenskaya et al. \cite{LSU}.
To avoid confusion, spaces of functions defined on $Q\subset \bR^{n+1}$ shall be always written in \emph{script letters}.
For $q\ge 1$, we let $\sL_q(Q)$ denote the Banach space consisting of measurable functions on $Q$ that are $q$-integrable.
For $Q=\Omega\times (a,b)$, we denote by $\sL_{q,r}(Q)$ the Banach space consisting of all measurable functions on $Q$ with a finite norm
\[
\norm{u}_{\sL_{q,r}(Q)}=\left(\int_a^b\left(\int_{\Omega} \abs{u(x,t)}^q\, dx\right)^{r/q}dt\right)^{1/r},
\]
where $q\ge 1$ and $r\ge 1$. Thus
$\sL_{q,q}(Q)$ is the space  $\sL_q(Q)$.
By $\sC^{\mu,\mu/2}(\bar Q)$ we denote the set of all bounded measurable functions $u$ on $Q$ for which $\abs{u}_{\mu,\mu/2;Q}$ is finite, where
we define the parabolic H\"{o}lder norm as follows:
\begin{align*}
\abs{u}_{\mu,\mu/2;Q}&=[u]_{\mu,\mu/2;Q}+\abs{u}_{0;Q}\\
&=\sup_{\substack{X, Y \in Q\\ X\neq Y}}\frac{\abs{u(X)-u(Y)}}{\abs{X-Y}^\mu_\sP}+\sup_{X\in Q}\abs{u(X)}, \quad \mu\in(0,1].
\end{align*}
We write $u\in \sC^\infty_c(Q)$ (resp. $\sC^\infty_c(\bar Q)$) if $u$ is an infinitely differentiable function on $\bR^{n+1}$ with  compact support in $Q$ (resp. $\bar Q$).
We write $D_i u=\partial u/\partial x_i$ ($i=1,\ldots,n$) and $u_t=\partial u /\partial t$.
We also write $Du=D_x u=(D_1 u,\ldots, D_n u)$.
We write $Q(t_0)$ for the set of all points $(x,t_0)$ in $Q$ and $I(Q)$ for
the set of all $t$ such that $Q(t)$ is nonempty.
We denote
\[
\tri{u}_{Q}^2= \int_{Q}  \abs{D_x u}^2 \,dx \, dt+\esssup\limits_{t\in I(Q)} \int_{Q(t)} \abs{u(x,t)}^2\,dx.
\]
The space $\sW^{1,0}_q(Q)$ denotes the Banach space consisting of functions $u\in \sL_q(Q)$ with weak derivatives $D_i u \in \sL_q(Q)$ ($i=1,\ldots,n$) with the norm
\[
\norm{u}_{\sW^{1,0}_q(Q)}=\norm{u}_{\sL_q(Q)}+\norm{D_x u}_{\sL_q(Q)}
\]
and by $\sW^{1,1}_q(Q)$ the Banach space with the norm
\[
\norm{u}_{\sW^{1,1}_q(Q)}=\norm{u}_{\sL_q(Q)}+\norm{D_x u}_{\sL_q(Q)}+ \norm{u_t}_{\sL_q(Q)}.
\]
In the case when $Q$ has a finite height (i.e., $Q\subset \bR^{n}\times(-T,T)$ for some $T<\infty$), we define $\sV_2(Q)$ as the Banach space consisting of all elements of $\sW^{1,0}_2(Q)$ having a finite norm $\norm{u}_{\sV_2(Q)}:=  \tri{u}_{Q}$ and the space $\sV^{1,0}_2(Q)$ is obtained by completing the set $\sW^{1,1}_2(Q)$ in the norm of $\sV_2(Q)$.
When $Q$ does not have a finite height, we say that $u \in \sV_2(Q)$ (resp. $\sV^{1,0}_2(Q)$) if $u \in \sV_2(Q_T)$ (resp. $\sV^{1,0}_2(Q_T)$) for all $T>0$, where $Q_T=Q \cap \set{ \abs{t} <T}$, and $\tri{u}_Q <\infty$.
Note that this definition allows that $1\in \sV^{1,0}_2(\Omega\times(0,\infty))$.
Finally, we write $u\in \sL_{q,loc}(Q)$ if $u \in \sL_q(Q')$ for all $Q'\Subset Q$ and similarly define $\sW^{1,0}_{q,loc}(Q)$, etc.

\subsection{Weak solutions}			\label{sec:weaksol}

For $\vec f,  \vec g_\alpha \in L^2(U)^n$, where $\alpha=1,\ldots, n$, we say that $\vec u$ is a weak  solution of $L \vec u=\vec f+D_\alpha \vec g_\alpha$ in $U$ if $\vec u \in W^{1,2}(U)^n$ and for any $\vec v \in W^{1,2}_0(U)^n$ satisfies the identity
\begin{equation}			\label{eq2.02bt}
\int_{U}  \sB(\vec u, \vec v)=-\int_{U} \vec f \cdot \vec v+\int_{U} \vec g_\alpha \cdot D_\alpha\vec v.
\end{equation}
Let $\Gamma_D, \Gamma_N$ be disjoint subsets of $\partial U$.
Recall that the traction $\traction(\vec u)$ is defined by the formula \eqref{eq:traction}.
We say that $\vec u$ is a weak solution of
\[
L\vec u = \vec f+D_\alpha \vec g_\alpha \;\text{ in }\;U,\quad \vec u =0\;\text{ on }\;\Gamma_D,\quad \traction(\vec u) =\vec g_\alpha \nu_\alpha\;\text{ on }\;\Gamma_N
\]
if $\vec u \in W^{1,2}(U;\Gamma_D)^n$ and for any $\vec v \in W^{1,2}(U;\partial  U\setminus \Gamma_N)^n$ satisfies the identity \eqref{eq2.02bt}.
Let $\Omega, D, N$ be as above.
For $\vec f\in L^2(\Omega)^n$, we say that $\vec u$ is a weak solution of the mixed problem
\[
L\vec u = \vec f\;\text{ in }\;\Omega,\quad \vec u =0\;\text{ on }\;D,\quad \traction(\vec u) =0\;\text{ on }\;N
\]
if $\vec u \in W^{1,2}(\Omega;D)^n$ and satisfies for any $\vec v \in W^{1,2}(\Omega; D)^n$ the identity
\[
\int_\Omega \sB(\vec u, \vec v)= -\int_\Omega \vec f\cdot \vec v.
\]

Let $Q$ be a cylinder $U \times (a,b)$, where $U \subset \bR^n$ and $-\infty< a<b <\infty$.
For $\vec f \in \sL_{2,1}(Q)^n$ and $\vec g_\alpha \in \sL_2(Q)^n$, we say that $\vec u$ is a weak solution of 
\[
\vec u_t-L\vec u=\vec f + D_\alpha \vec g_\alpha
\]
if $\vec u \in \sV_2(Q)^n$ and satisfies for all $\vec \phi \in \sC^\infty_c (Q)^n$ the identity
\begin{equation}		\label{eqn:E-71}
-\int_{Q} \vec u \cdot \vec \phi_t+ \int_{Q} \sB(\vec u, \vec \phi)= \int_{Q} \vec f \cdot \vec \phi-\int_{Q} \vec g_\alpha \cdot D_\alpha\vec \phi.
\end{equation}
Let $\Gamma_D, \Gamma_N$ be disjoint subsets of $\partial U$.
We say that $\vec u$ is a weak solution of
\[
\left\{
\begin{array}{ll}
\vec u_t - L\vec u =\vec f + D_\alpha \vec g_\alpha & \text{in }\; U  \times (a,b) =:Q\\
\vec u=0 & \text{on }\;  \Gamma_D \times (a,b)\\
\traction(\vec u) = -\vec g_\alpha \nu_\alpha  & \text{on }\; \Gamma_N \times (a,b) =:S
\end{array}
\right. 
\]
if $\vec u\in \sV_2(Q)^n$, $\vec u(\cdot, t) \in W^{1,2}(U; \Gamma_D)^n$ for a.e. $t\in (a,b)$, and it satisfies the identity \eqref{eqn:E-71} for all $\vec \phi \in \sC^\infty_c (Q\cup S)^n$.
Next, denote $Q=\Omega\times (a,b)$, and let $\vec f \in \sL_{2,1}(Q)^n$, $\vec g_\alpha\in \sL_2(Q)^n$, and $\vec \psi \in L^2(\Omega)^n$.
By a weak solution in $\sV_2(Q)^n$ (resp. $\sV^{1,0}_2(Q)^n$) of the problem
\begin{equation} \label{PMixed}
\left\{
\begin{array}{ll}
\vec u_t - L\vec u =\vec f + D_\alpha \vec g_\alpha& \text{in }\; \Omega \times (a,b)\\
\vec u=0 & \text{on }\;  D\times (a,b)\\
\traction(\vec u) = -\vec g_\alpha \nu_\alpha & \text{on }\; N \times (a,b)\\
\vec u(\cdot, a) = \vec \psi &  \text{on }\;\Omega,
\end{array}
\right. 
\end{equation}
we mean $\vec u(x,t)$ in $\sV_2(Q)^n$ (resp. $\sV^{1,0}_2(Q)^n$) such that  $\vec u(\cdot, t) \in W^{1,2}(\Omega;D)^n$ for a.e. $t\in (a,b)$ and satisfying the identity
\[
-\int_Q  \vec u \cdot \vec \phi_t +\int_Q \sB(\vec u,\vec \phi) - \int_Q \vec f \cdot \vec \phi+\int_{Q} \vec g_\alpha \cdot D_\alpha\vec \phi = \int_{\Omega} \vec \psi(x) \cdot \vec \phi(x,a)\,dx
\]
for any $\vec \phi(x,t) \in \sC^\infty_c(\bar Q)^n$ that vanishes on $D\times (a,b)$ and  equals to zero for $t=a$.
We may identify $\vec f\in \sL_{2,1}(Q)^n$ as an element in $L^1(a,b; \vec V')$ in the sense
\[
[\vec f(\cdot, t)] (\vec v) =\int_\Omega \vec f(x,t)\cdot \vec v(x)\,dx,\quad \vec v \in \vec V,
\]
and consider the problem
\begin{equation} \label{eq2.12mx}
\left\{
\begin{array}{ll}
\vec u_t - L\vec u =\vec f & \text{in }\; \Omega \times (a,b)\\
\vec u=0 & \text{on }\;  D\times (a,b)\\
\traction(\vec u) =0 & \text{on }\; N \times (a,b)\\
\vec u(\cdot, a) = \vec \psi &  \text{on }\;\Omega\\
\vec u(\cdot, t)\in \vec V&\text{for a.e.}\;t\in (a,b).
\end{array}
\right. 
\end{equation}
In the above, we impose the compatibility condition for $\vec \psi$ that $\ip{\vec \psi,\vec v}=0$ for all $\vec v \in \cR$ in the case when $D=\emptyset$.
We shall say that $\vec u$ is a weak solution of the problem \eqref{eq2.12mx} if $\vec u \in\sV_2^{1,0}(Q)^n$, $\vec u(\cdot, t) \in \vec V$ for a.e. $t\in(a,b)$, and satisfies the identity
\[
-\int_Q  \vec u \cdot \vec \phi_t +\int_Q \sB(\vec u,\vec \phi) - \int_Q \vec f \cdot \vec \phi  = \int_{\Omega} \vec \psi(x) \cdot \vec \phi(x,a)\,dx
\]
for any $\vec \phi(x,t) \in \sC^\infty_c(\bar Q)^n$ that vanishes on $D\times (a,b)$, satisfies $\ip{\vec \phi(\cdot, t), \vec v}= 0$ for all $t\in [a,b]$ and any $\vec v \in \cR$, and equals to zero for $t=a$.
Note that if $D\neq \emptyset$, then a weak solution of the problem \eqref{eq2.12mx} is also a weak solution in $\sV^{1,0}_2(Q)^n$ of the problem \eqref{PMixed} with $\vec g_\alpha=0$ for all $\alpha=1,\ldots,n$,  and vice versa.
However, when $D=\emptyset$, they are not the same in general; note that if $\vec f(\cdot,t) -\tilde{\vec f}(\cdot,t) \in \cR$, then they are the same as elements in $L^1(a,b; \vec V')$.

\subsection{Heat kernel for the system of linear elasticity}	\label{sec2.4}
We say that an $n\times n$ matrix valued function $\vec K(x,y,t)$, with measurable entries $K_{ij} \colon \Omega\times \Omega\times [0,\infty) \to  \bar \bR$, is the heat kernel for \eqref{MP} if it satisfies the following properties, where we denote  $\cQ=\Omega\times [0,\infty)$.
\begin{enumerate}[a)]
\item
For all $y\in \Omega$, elements of $\vec K(\cdot,y,\cdot)$ belong to $\sW^{1,0}_{1,loc}(\cQ) \cap \sV_{2}(\cQ \setminus Q_{+}(\hat{Y},r))$ for any $r>0$.

\item
For all $y\in \Omega$, $\vec K(\cdot,y,\cdot)$ is a generalized solution of the problem \eqref{HK} in the sense that $\vec K(\cdot,y,t) \in W^{1,2}(\Omega;D)^n$ for a.e. $t>0$ and for any $\vec \phi=(\phi^1,\ldots,\phi^n)^T \in \sC^\infty_c(\bar \cQ)^n$ that vanishes on $N\times [0,\infty)$, we have the identity
\begin{multline}		\label{eq2.04x}
-\int_{\cQ} K_{ik}(x,y,t) \frac{\partial}{\partial t} \phi^i (x,t)\,dx\,dt\\+ \int_{\cQ} a^{\alpha\beta}_{ij}\frac{\partial}{\partial x_\beta} K_{jk}(x,y,t) \frac{\partial}{\partial x_\alpha} \phi^i(x,t) \,dx\,dt
= \phi^k(y,0).
\end{multline}
\item
For any $\vec f=(f^1,\ldots, f^n)^T \in \sC^\infty_c(\bar \cQ)^n$, the function $\vec u$ given by
\[
\vec u(x,t):= \int_0^t\!\!\!\int_\Omega \vec K(y,x,t-s)^T \vec f(y,s)\,dy\,ds
\]
is, for any $T>0$, a unique weak solution in $\sV^{1,0}_2(\Omega\times (0,T))^n$ of the problem 
\begin{equation}			\label{2.13vv}
\left\{
\begin{array}{ll}
\vec u_t - L \vec u = \vec f \quad & \text{in }\; \Omega \times (0,T), \\
\vec u=0 & \text{on }\;  D\times (0,T),\\
\traction(\vec u) =0  & \text{on }\; N \times (0,T),\\
\vec u(\cdot, 0) = 0  &  \text{on }\; \Omega.
\end{array} \right. 
\end{equation}
\end{enumerate}
We note that part c) of the above definition gives the uniqueness of the heat kernel for \eqref{MP}.

\subsection{Basic Assumptions and their consequences}	\label{sec:bac}
\begin{enumerate}[{\bf{H}1.}]
\item	
We assume $\Omega\subset \bR^n$, $n\ge 2$ is a bounded domain.
If $D=\partial\Omega$, we do not make any further assumption.
Otherwise, we assume the $(\epsilon,\delta)$-condition of Jones \cite{Jones} for some $\epsilon, \delta >0$:
For any $x, y \in \Omega$ such that $\abs{x-y}<\delta$, there is a rectifiable arc $\gamma$ joining $x$ to $y$ and satisfying
\[
l(\gamma) \le \frac{1}{\epsilon}\abs{x-y},\quad d(z) \ge \frac{\epsilon\abs{x-z}\abs{y-x}}{\abs{x-y}}\;\text{ for all $z$ on $\gamma$},
\]
where $l(\gamma)$ denotes the arc length of $\gamma$ and $d(z)$ is the distance from z to the complement of $\Omega$.
If $D \neq \partial\Omega$ and $D\neq \emptyset$, we assume further that $D$ has a portion of Lipschitz boundary; i.e. there exist $x_0\in D$ and a neighborhood $V$ of $x_0$ in $\bR^n$ and new orthogonal coordinates $\set{y_1,\ldots, y_n}$ such that $V$ is a hypercube in the new coordinates:
\[
V=\set{(y_1,\ldots, y_n): -a_j<y_j<a_j, \;1\le j \le n};
\]
there exists a Lipschitz continuous function $\varphi$ defined in
\[
V'=\set{(y_1,\ldots, y_{n-1}): -a_j<y_j<a_j, \;1\le j \le n-1};
\]
and such that
\[
\begin{aligned}
\abs{\varphi(y')} &\le a_n/2\;\text{ for every }\;y'=(y_1, \ldots, y_{n-1}) \in V',\\
\Omega\cap V &=\set{y=(y',y_n)\in V: y_n <\varphi(y')},\\
D \cap V &=\set{y=(y',y_n)\in V: y_n =\varphi(y')}.\\
\end{aligned}
\]
In other words, in a neighborhood $V$ of $x_0$, $\Omega$ is below the graph of $\varphi$ and $D$ is the graph of $\varphi$.
\end{enumerate}
Basically, we introduce the assumption H1 is to guarantee the multiplicative inequality \eqref{eq:mei} and the second Korn inequality \eqref{eq2.20ns} are available to us.
We recall that the following multiplicative inequality holds for any $u$  in $W^{1,2}(\bR^n)$ with $n\ge 1$; see \cite[Theorem~2.2, p. 62]{LSU}.
\begin{equation}		\label{extrn}
\norm{u}_{L^{2(n+2)/n}(\bR^n)} \le c(n) \norm{D u}_{L^2(\bR^n)}^{n/(n+2)}  \norm{u}_{L^2(\bR^n)}^{2/(n+2)}.
\end{equation}
If we assume H1, then there is an extension operator $E \colon W^{1,2}(\Omega) \to W^{1,2}(\bR^n)$ such that the following holds; see \cite[Theorem~8]{Rogers}.
\begin{equation}		\label{extop}
\norm{Eu}_{L^2(\bR^n)} \le C \norm{u}_{L^2(\Omega)},\quad
\norm{Eu}_{W^{1,2}(\bR^n)} \le C \norm{u}_{W^{1,2}(\Omega)}.
\end{equation}
Then by combining \eqref{extop} and \eqref{extrn}, for any $u \in \tilde W^{1,2}(\Omega; D)$, we obtain
\begin{align*}
\norm{u}_{L^{2(n+2)/n}(\Omega)} 
&\le C\norm{D(E u)}_{L^2(\bR^n)}^{n/(n+2)} \norm{E  u}_{L^2(\bR^n)}^{2/(n+2)}\\
& \le C\norm{u}_{W^{1,2}(\Omega)}^{n/(n+2)} \norm{u}_{L^2(\Omega)}^{2/(n+2)}\le C\norm{D u}^{n/(n+2)}_{L^2(\Omega)}\norm{u}^{2/(n+2)}_{L^2(\Omega)},
\end{align*}
where in the last step we used H1 to apply the Friedrichs inequality (or Poincar\'{e}'s inequality if $D=\emptyset$):
\[
\norm{u}_{L^2(\Omega)} \le C \norm{Du}_{L^2(\Omega)}, \quad \forall u \in \tilde{W}^{1,2}(\Omega; D).
\]
We have proved that H1 implies that there is $\gamma=\gamma(n,\Omega,D)$ such that for any $u \in \tilde W^{1,2}(\Omega; D)$, we have
\begin{equation}			\label{eq:mei}
\norm{u}_{L^{2(n+2)/n}(\Omega)} \le \gamma \norm{D u}^{n/(n+2)}_{L^2(\Omega)}\norm{u}^{2/(n+2)}_{L^2(\Omega)}.
\end{equation}
If $u \in \sV_2(\Omega\times (a,b))$ is such that $u(\cdot, t) \in \tilde W^{1,2}(\Omega;D)$ for a.e. $t \in (a,b)$, where $-\infty\le a <b\le \infty$, then by \eqref{eq:mei} we have (see \cite[pp. 74--75]{LSU})
\begin{equation}			\label{eq5.13d}
\norm{u}_{\sL_{2(n+2)/n}(\Omega\times(a,b))} \le \gamma  \tri{u}_{\Omega\times(a,b)}.
\end{equation}
Another important consequence of the inequality \eqref{eq:mei} is the following: For any $u\in \sV_2(\Omega\times(a,b))$ with $b-a<\infty$, we have
\begin{equation}			\label{paraemb}
\norm{u}_{\sL_{2(n+2)/n}(\Omega\times(a,b))} \le \left(2\gamma+(b-a)^{n/2}\abs{\Omega}^{-1}\right)^{\frac{1}{n+2}} \tri{u}_{\Omega\times(a,b)}.
\end{equation}
We refer to \cite[Eq.~(3.8), p. 77]{LSU} for the proof of \eqref{paraemb}. 
Moreover, H1 implies the following second Korn inequality: (see \cite{DM04} for the proof)
\begin{equation}		\label{eq2.20ns}
\norm{\vec u}_{W^{1,2}(\Omega)} \le C \left\{\norm{\vec u}_{L^2(\Omega)} +\norm{\strain(\vec u)}_{L^2(\Omega)} \right\}.
\end{equation}
In fact, if $\vec u \in W^{1,2}(\Omega;\partial\Omega)^n=W^{1,2}_0(\Omega)^n$, we have the first Korn inequality
\[
\norm{D \vec u}_{L^2(\Omega)}^2 \le 2 \norm{\strain(\vec u)}_{L^2(\Omega)}^2. 
\]
Also, we have the following inequalities for any $\vec u \in \vec V$:
\begin{equation}
\label{eq2.03a}
\norm{\vec u}_{W^{1,2}(\Omega)}  \le C \norm{\strain(\vec u)}_{L^2(\Omega)}.
\end{equation}
The inequality \eqref{eq2.03a} is obtained by utilizing \eqref{eq2.20ns} in the proof of \cite[Theorem~2.7, p. 21]{OSY}.
By \eqref{eq2.03a} and \eqref{eq1.10b}, for any  $\vec u\in \vec V$, we have
\begin{equation}			\label{eq2.08q}
\int_\Omega \sB(\vec u,\vec u)\,dx \ge c \int_\Omega \abs{D \vec u}^2 \,dx.
\end{equation}

\begin{lemma}		\label{lem:pre}
Assume  H1 and let $\vec \psi \in L^2(\Omega)^n$ and $\vec f \in \sL_{2,1}(Q)^n$, where $Q=\Omega\times (a,b)$ and $-\infty<a<b<\infty$.
Then, there exists a unique weak solution $\vec u$ in $\sV^{1,0}_2(Q)^n$ of the problem \eqref{PMixed}. 
Moreover, if we assume that $\ip{\vec \psi, \vec v}=0$ for all $\vec v \in \cR$ in the case  when $D=\emptyset$, then there also exists a unique weak solution of the problem \eqref{eq2.12mx}.
If $\norm{\vec f}_{\sL_{2(n+2)/(n+4)}(Q)}<\infty$, then the weak solution $\vec u$ of the problem \eqref{eq2.12mx} satisfies an energy inequality
\begin{equation}			\label{enieq}
\tri{\vec u}_{\Omega\times (a,b)} \le C \left\{ \norm{\vec f}_{\sL_{2(n+2)/(n+4)}(Q)}+ \norm{\vec\psi}_{L^2(\Omega)} \right\},
\end{equation}
where $C$ depends only on $n, \kappa_1, \kappa_2$ and the constants appearing in \eqref{eq5.13d} and \eqref{eq2.03a}.
\end{lemma}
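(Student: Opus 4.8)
The plan is to run the Galerkin method in the manner of \cite[Ch.~III]{LSU}, adapted to the system; the only point at which the structure of $L$ (as opposed to a general second order elliptic operator) matters is the coercivity of the form $\sB$, and this is handed to us for free by the Korn-type inequalities \eqref{eq2.20ns}--\eqref{eq2.08q}, which are available under H1. Write $H=L^2(\Omega)^n$ if $D\neq\emptyset$ and $H=\bigset{\vec u\in L^2(\Omega)^n:\ip{\vec u,\vec v}=0\ \text{for all}\ \vec v\in\cR}$ if $D=\emptyset$, and let $\vec W$ denote $W^{1,2}(\Omega;D)^n$ when treating \eqref{PMixed} and $\vec V$ when treating \eqref{eq2.12mx}. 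In every case $\vec W$ is separable, its $L^2$-closure equals $H$, and $\vec\psi\in H$ (for \eqref{eq2.12mx} with $D=\emptyset$ this is exactly the assumed compatibility condition). Fix $\set{\vec w_k}_{k\ge1}\subset\vec W$, orthonormal in $H$, with $\bigcup_m\operatorname{span}\set{\vec w_1,\dots,\vec w_m}$ dense in $\vec W$.

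We look for $\vec u_m(x,t)=\sum_{k=1}^m c^m_k(t)\,\vec w_k(x)$ solving
\[
\tfrac{d}{dt}c^m_k(t)+\sum_{j=1}^m c^m_j(t)\int_\Omega\sB(\vec w_j,\vec w_k)\,dx=\int_\Omega\bigl(\vec f\cdot\vec w_k-\vec g_\alpha\cdot D_\alpha\vec w_k\bigr)\,dx,\qquad c^m_k(a)=\ip{\vec\psi,\vec w_k},
\]
for $k=1,\dots,m$ (for \eqref{eq2.12mx} we drop $\vec g_\alpha$ and read the right side as $[\vec f(\cdot,t)](\vec w_k)$). This is a linear ODE system with $L^1(a,b)$ coefficients, so $c^m_k$ exist and are absolutely continuous on $[a,b]$, and $\vec u_m(\cdot,a)=P_m\vec\psi\to\vec\psi$ in $L^2(\Omega)^n$. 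Multiplying the $k$-th equation by $c^m_k(t)$ and summing gives, for a.e.\ $t$,
\[
\tfrac12\tfrac{d}{dt}\norm{\vec u_m(\cdot,t)}_{L^2(\Omega)}^2+\int_\Omega\sB(\vec u_m,\vec u_m)\,dx=\int_\Omega\bigl(\vec f\cdot\vec u_m-\vec g_\alpha\cdot D_\alpha\vec u_m\bigr)\,dx .
\]
For \eqref{PMixed} I would bound the right side via Cauchy--Schwarz, use $\int_\Omega\sB(\vec u_m,\vec u_m)\ge c_0\norm{D\vec u_m}_{L^2}^2-c_1\norm{\vec u_m}_{L^2}^2$ (from \eqref{eq1.10b}, \eqref{eq2.20ns} and the Friedrichs/Poincar\'e inequality), absorb $c_0\norm{D\vec u_m}_{L^2}^2$ and apply Gronwall to get $\tri{\vec u_m}_{Q}\le C(\norm{\vec f}_{\sL_{2,1}(Q)}+\norm{\vec g_\alpha}_{\sL_2(Q)}+\norm{\vec\psi}_{L^2})$ uniformly in $m$ (with $C$ allowed to depend on $b-a$).

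For \eqref{eq2.12mx} the form is genuinely coercive on $\vec V$ by \eqref{eq2.08q}, so $\int_\Omega\sB(\vec u_m,\vec u_m)\ge c\norm{D\vec u_m}_{L^2}^2$; integrating the energy identity over $(a,t')$ and taking the supremum over $t'\in(a,b)$,
\[
\tfrac12\tri{\vec u_m}_{Q}^2\le\tfrac12\norm{\vec\psi}_{L^2(\Omega)}^2+\int_Q\abs{\vec f\cdot\vec u_m} .
\]
Since $\tfrac{n}{2(n+2)}+\tfrac{n+4}{2(n+2)}=1$, H\"older's inequality followed by \eqref{eq5.13d} (note $\vec u_m(\cdot,t)\in\vec V\subset\tilde W^{1,2}(\Omega;D)^n$) gives
\[
\int_Q\abs{\vec f\cdot\vec u_m}\le\norm{\vec f}_{\sL_{2(n+2)/(n+4)}(Q)}\norm{\vec u_m}_{\sL_{2(n+2)/n}(Q)}\le\gamma\,\norm{\vec f}_{\sL_{2(n+2)/(n+4)}(Q)}\,\tri{\vec u_m}_{Q},
\]
and Young's inequality yields $\tri{\vec u_m}_{Q}^2\le C\bigl(\norm{\vec f}_{\sL_{2(n+2)/(n+4)}(Q)}^2+\norm{\vec\psi}_{L^2(\Omega)}^2\bigr)$ uniformly in $m$, with $C$ depending only on $n,\kappa_1,\kappa_2$ and the constants in \eqref{eq5.13d} and \eqref{eq2.03a} — in particular not on $b-a$, since $\gamma$ does not.

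By these uniform bounds, along a subsequence $\vec u_m\rightharpoonup\vec u$ in $\sL_2(Q)^n$, $D\vec u_m\rightharpoonup D\vec u$ in $\sL_2(Q)^{n\times n}$, and $\vec u_m\to\vec u$ weakly-$*$ in $L^\infty(a,b;L^2(\Omega)^n)$; weak lower semicontinuity of the norms transfers the a priori bound to $\vec u$, giving \eqref{enieq}. Testing the ODE against $\vec\phi(x,t)=\sum_{k\le m_0}d_k(t)\vec w_k(x)$ with $d_k\in C^1([a,b])$, $d_k(b)=0$, letting $m\to\infty$ for fixed $m_0$ and then using density, one checks that $\vec u$ satisfies the defining weak identity, including the initial datum $\vec u(\cdot,a)=\vec\psi$. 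To upgrade $\vec u\in\sV_2(Q)^n$ to $\vec u\in\sV^{1,0}_2(Q)^n$, I would read off from the identity that $\vec u_t\in L^1(a,b;\vec W')$ (resp.\ $L^1(a,b;\vec V')$), deduce $\vec u\in C([a,b];H)$ after modification on a null set, and approximate $\vec u$ in the $\sV_2$-norm by its Steklov time-averages, which lie in $\sW^{1,1}_2(Q)^n$. Uniqueness of either problem follows by applying the energy identity to the difference $\vec w$ of two solutions (justified via a Steklov regularization so that $\vec w$ itself may serve as test function): since $\vec w(\cdot,a)=0$ and $\int_\Omega\sB(\vec w,\vec w)\ge0$, we obtain $\norm{\vec w(\cdot,t)}_{L^2(\Omega)}=0$ for every $t$. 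I expect the steps requiring the most care to be this last limiting argument — membership in $\sV^{1,0}_2$, attainment of the initial value, and the $D=\emptyset$ bookkeeping (choice of the $L^2$-orthonormal basis of $\vec V$, convergence of the projection $P_m\vec\psi$, and the interpretation of $\vec f$ as an element of $L^1(a,b;\vec V')$) — whereas the a priori estimate itself is immediate once the coercivity supplied by the Korn inequalities is in place.
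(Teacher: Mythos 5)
Your proposal is correct and follows essentially the same route as the paper, which simply cites ``the standard Galerkin's method and the energy inequality'' made available by the second Korn inequalities \eqref{eq2.20ns} and \eqref{eq2.03a}; you have merely supplied the (standard) details the authors left implicit, including the key observation that the Hölder pairing $\tfrac{n}{2(n+2)}+\tfrac{n+4}{2(n+2)}=1$ together with \eqref{eq5.13d} yields a constant in \eqref{enieq} independent of $b-a$. The one minor slip is the display $\tfrac12\tri{\vec u_m}_{Q}^2\le\tfrac12\norm{\vec\psi}_{L^2(\Omega)}^2+\int_Q\abs{\vec f\cdot\vec u_m}$: since $\tri{\cdot}_Q^2$ is the sum of the Dirichlet energy and the supremum of the $L^2$-slices, one must take the supremum in $t'$ and use the bound once for each term, which changes the numerical constants but not the conclusion.
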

\begin{proof}
With the aid of the second Korn inequalities \eqref{eq2.20ns} or
\eqref{eq2.03a}, it follows from the standard Galerkin's method and the energy inequality.
\end{proof}

\begin{enumerate}[{\bf{H}1.}]
\setcounter{enumi}{1}
\item
There exist $\mu_0\in(0,1]$ and $A_0>0$ such that if $\vec u$ is a weak solution of $L\vec u= 0$ in $B=B(x_0,r)$, where $x_0\in \Omega$ and $0<r\le d(x_0)$, then $\vec u$ is H\"{o}lder continuous in $\frac{1}{2}B=B(x_0,r/2)$ with an estimate
\begin{equation}		\label{IH}
[\vec u]_{\mu_0; \frac{1}{2}B}\le A_0 r^{-\mu_0}\left(\fint_{B}\,\abs{\vec u(y)}^2\,dy \right)^{1/2}.
\end{equation}
Here, we use the notation $\fint_B u \,dx =\frac{1}{\abs{B}}\int_B u\,dx$.
\end{enumerate}
It follows from H2 that a weak solution of $\vec u_t -L \vec u$ is
also locally  H\"{o}lder continuous.
 \begin{lemma}				\label{lem:ihp}
H2 implies that there exist $\mu_1\in (0,\mu_0)$ and $A_1>0$ such that whenever $\vec u$ is a weak solution in $\sV_2(Q)^n$ of $\vec u_t -L \vec u=0$ in $Q=Q_{-}(X_0,r)$, where $X_0=(x_0,t_0)\in\cQ$ and $0<r\le d(x_0)$, $\vec u$ is H\"{o}lder continuous in $\frac{1}{2}Q=Q_{-}(X_0,r/2)$ and we have an estimate
\[
[\vec u]_{\mu_1,\mu_1/2; \frac{1}{2}Q} \le A_1 r^{-\mu_1-(n+2)/2} \norm{\vec u}_{\sL_2(Q)}.
\]
\end{lemma}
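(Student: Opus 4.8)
The plan is to derive the parabolic Hölder estimate from the elliptic one (H2) by the standard device of freezing the time variable and using an energy/oscillation iteration in the spatial slices, combined with the classical $L^2$–Caccioppoli and Poincaré machinery in the $t$-direction. First I would reduce to the model situation: by translation assume $X_0=(0,0)$, and work on $Q=Q_-(0,r)$ with $r\le d(x_0)$, so that on every time slice the ball $B(0,\rho)$ with $\rho\le r$ lies at distance $\ge r-\rho$ from $\Omega^c$ and H2 is applicable on slightly smaller spatial balls. The key point is that $\vec u_t-L\vec u=0$ means that for a.e.\ fixed $t$, $\vec u(\cdot,t)$ is \emph{not} a weak solution of $L\vec u=0$, but $L\vec u(\cdot,t)=\vec u_t(\cdot,t)$; so I would split $\vec u = \vec w + \vec h$ on a sub-cylinder, where $\vec h(\cdot,t)$ solves $L\vec h(\cdot,t)=0$ in a spatial ball with $\vec h(\cdot,t)=\vec u(\cdot,t)$ on the boundary (this uses Lemma~\ref{lem:pre}/the elliptic solvability implicit in H1 together with the second Korn inequality \eqref{eq2.03a} for well-posedness on the slice), and $\vec w(\cdot,t)\in W^{1,2}_0$ captures the error coming from $\vec u_t$.

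The heart of the argument is a decay estimate for $\Phi(\rho):=\int_{Q_-(0,\rho)}\abs{D_x\vec u}^2$, or more precisely for a quantity controlling oscillation. For the harmonic part $\vec h$, H2 (applied on each slice, then integrated in $t$, using the interior gradient bound that follows from \eqref{IH} via a standard Caccioppoli argument on dyadic balls) gives a Campanato-type decay $\int_{Q_-(0,\theta\rho)}\abs{\vec h-(\vec h)_{\theta\rho}}^2 \le C\theta^{n+2+2\mu_0}\int_{Q_-(0,\rho)}\abs{\vec h-(\vec h)_\rho}^2$ after controlling the time-oscillation of the slice-averages by $\norm{\vec u_t}$ bounds coming from the equation tested against $\vec u$ (Caccioppoli in both $x$ and $t$). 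For the correction $\vec w$, the energy inequality from Lemma~\ref{lem:pre} gives $\tri{\vec w}_{Q}\le C\norm{\vec u_t}_{\text{(dual norm)}}$, and by the parabolic embedding \eqref{paraemb} one estimates $\int_{Q_-(0,\rho)}\abs{\vec w}^2$ by a higher power of $\rho$ times $\tri{\vec u}^2_{Q}$; crucially the exponent beats $n+2$, so $\vec w$ is a genuinely lower-order perturbation. Adding the two contributions and invoking the usual iteration lemma (e.g.\ Giaquinta's, or \cite[Lemma on decay]{LSU}) yields $\int_{Q_-(0,\rho)}\abs{\vec u-(\vec u)_\rho}^2 \le C(\rho/r)^{n+2+2\mu_1}\,r^{-(n+2)}\norm{\vec u}^2_{\sL_2(Q)}\cdot r^{n+2}$ for some $\mu_1\in(0,\mu_0)$, and Campanato's characterization of Hölder spaces on cylinders then gives $[\vec u]_{\mu_1,\mu_1/2;\frac12 Q}\le A_1 r^{-\mu_1-(n+2)/2}\norm{\vec u}_{\sL_2(Q)}$, with the parabolic scaling of the norm accounted for exactly as in the statement.

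The main obstacle I expect is the bookkeeping around the time variable: H2 is purely elliptic and says nothing about $t$-regularity, so one must manufacture control of $\vec u_t$ (in the negative Sobolev norm $L^2(a,b;\vec V')$ and hence, via the equation, enough to run the Campanato iteration) purely from the energy identity and the Caccioppoli inequality, and then carefully track how the slice-average $(\vec h(\cdot,t))_\rho$ varies in $t$ so that the decay in $\rho$ is not destroyed by oscillation in $t$. The scaling-invariant formulation (all constants depending only on $n,\kappa_1,\kappa_2$, the Korn/embedding constants, and $A_0,\mu_0$) must be maintained throughout by using parabolic rescaling $\vec u(x,t)\mapsto \vec u(rx,r^2 t)$ and checking that every inequality invoked is scale-invariant in this sense. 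Everything else — the split into $\vec w+\vec h$, Caccioppoli, the iteration lemma, Campanato's theorem — is standard once H2 is in hand, so I would present those steps briskly and concentrate the detail on the $\vec u_t$-control and the $\theta$-power accounting.
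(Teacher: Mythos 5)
Your decomposition $\vec u=\vec h+\vec w$ (slice-wise elliptic approximation plus remainder) runs into a scale-criticality problem that the Campanato iteration cannot overcome as you have set it up. The remainder $\vec w(\cdot,t)$ solves $L\vec w(\cdot,t)=\vec u_t(\cdot,t)$ on $B_\rho$ with zero boundary data, and from the weak formulation the only a priori control on $\vec u_t(\cdot,t)$ is $\norm{\vec u_t(\cdot,t)}_{W^{-1,2}(B_\rho)}\le C\norm{D\vec u(\cdot,t)}_{L^2(B_\rho)}$. The Lax--Milgram estimate then gives $\int_{Q_\rho}\abs{\vec w}^2 \le C\rho^2\int_{Q_\rho}\abs{D_x\vec u}^2$, and Caccioppoli turns the right side into $C\int_{Q_{2\rho}}\abs{\vec u-\vec\lambda}^2$ --- the same order as the quantity you are iterating on, not a perturbation. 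Dimensionally, $\vec u_t$ scales like $L\vec u$ under the parabolic rescaling $\vec u(x,t)\mapsto\vec u(\lambda x,\lambda^2 t)$, so freezing $t$ and treating $\vec u_t$ as a right-hand side for the elliptic problem does not produce a subcritical term. Your claim that the exponent coming from $\vec w$ ``beats $n+2$'' does not hold: one gets at best $\rho^2\tri{\vec u}^2_Q$, and $\rho^2$ loses to $\rho^{n+2}$ for every $n\ge 1$.

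The ingredient that makes the paper's proof work, and that your plan never invokes, is the time-independence of the coefficients $A^{\alpha\beta}(x)$. The paper's proof of Lemma~\ref{lem:ihp} simply cites \cite[Theorem~3.3]{Kim} (with the second Korn inequality \eqref{eq2.20ns}/\eqref{eq2.03a} replacing the usual ellipticity in the energy estimates). Kim's argument proceeds by observing that because $L$ does not depend on $t$, the difference quotients $(\vec u(x,t+h)-\vec u(x,t))/h$, and in the limit $\vec u_t$, are themselves weak solutions of the same parabolic system. Iterating interior energy estimates on $\partial_t^k\vec u$ together with the parabolic embedding \eqref{eq5.13d} yields local $L^\infty$ (indeed H\"older-in-$t$) control of $\vec u_t$. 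Only at that point is the elliptic equation $L\vec u(\cdot,t)=\vec u_t(\cdot,t)$ equipped with a genuinely lower-order (bounded) right-hand side, and the interior elliptic H\"older estimate H2, combined with a standard perturbation argument, produces the spatial H\"older continuity; the time H\"older continuity follows from the $L^\infty$ bound on $\vec u_t$. Your plan can be repaired by inserting this bootstrap on $\vec u_t$ before the decomposition --- then $\int_{Q_\rho}\abs{\vec w}^2\lesssim \rho^{n+6}\norm{\vec u_t}^2_{\sL_\infty}$ really does beat $\rho^{n+2+2\mu_0}$ --- but as written the plan omits the one structural fact (time-independence) on which the lemma actually rests, and the iteration you describe will not close without it.
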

\begin{proof}
With the second Korn inequality available to us, the proof is essentially the same as that of \cite[Theorem~3.3]{Kim}.
\end{proof}

Finally, we introduce a condition that was originally considered by Auscher and Tchamitchian \cite{AT2} and is referred to as the \emph{Dirichlet property}.
\begin{enumerate}[{\bf{H}1.}]
\setcounter{enumi}{2}
\item
There exist $\mu_0 \in(0,1]$ and $A_0>0$ such that if $\vec u$ is a weak solution of
\[
\left\{
\begin{array}{lll}
L \vec u = 0  &\text{in}& B \cap \Omega, \\
\vec u=0 & \text{on}& B \cap D,\\
\traction(\vec u) =0  & \text{on} & B \cap N,
\end{array} \right. 
\]
where $B=B(x_0,r)$ with $x_0\in \Omega$ and $0<r\le \diam\Omega$, then for any $0<\rho \le r$, we have
\begin{equation}		\label{dirichlet}
\int_{B(x_0,\rho)\cap \Omega} \abs{D \vec u}^2 \,dx \le A_0 \left(\frac{\rho}{r}\right)^{n-2+2\mu_0} \int_{B(x_0,r)\cap \Omega} \abs{D \vec u}^2 \,dx.
\end{equation}
\end{enumerate}
The following lemma says that H3 implies H2.
Moreover, it shows that if there is $\beta>0$ such that for all $x_0\in\Omega$ and $0<r \le \diam \Omega$, we have
\begin{equation}			\label{typea}
\abs{\Omega\cap B(x_0,r)} \ge \beta r^n,
\end{equation}
then, weak solutions of $\vec u_t -L \vec u =0 $ with homogeneous
boundary data are  H\"{o}lder continuous up to the boundary.
It is not hard to check that $(\epsilon, \delta)$-domains satisfy 
condition \eqref{typea}, so that domains that satisfy H1 also satisfy
(\ref{typea}). 

\begin{lemma}		\label{lem:lhp}
Let $\cQ=\Omega\times [0,\infty)$, $\cD=D\times [0,\infty)$, and
    $\cN=N\times [0,\infty)$. 
If $\Omega$ satisfies the condition \eqref{typea}, then H3 implies 
that  there exist $\mu_1 \in (0,\mu_0)$ and $A_1>0$ such that if $\vec u$ is a weak solution in $\sV_2(Q\cap \cQ)^n$ of 
\begin{equation}			\label{eq2.10tt}
\left\{
\begin{array}{lll}
\vec u_t - L \vec u = 0  &\text{in}& Q \cap \cQ, \\
\vec u=0 & \text{on}& Q \cap \cD,\\
\traction(\vec u) =0  & \text{on} & Q \cap \cN,
\end{array} \right. 
\end{equation}
where $Q=Q_{-}(X_0,r)$ with $X_0=(x_0,t_0) \in\cQ$ and $0<r \le \sqrt{t_0}\wedge \diam \Omega$, 
then $\vec u$ is H\"{o}lder continuous in $\frac{1}{2}Q \cap \cQ$, where  $\frac{1}{2} Q= Q_{-}(X_0,r/2)$, and we have an estimate
\begin{equation}			\label{eq2.11sc}
r^{\mu_1} [\vec u]_{\mu_1,\mu_1/2; \frac{1}{2}Q \cap \cQ}+\abs{\vec u}_{0; \frac{1}{2}Q \cap \cQ} \le A_1 r^{-(n+2)/2} \norm{\vec u}_{\sL_2(Q\cap \cQ)}.
\end{equation}
\end{lemma}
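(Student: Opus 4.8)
The plan is to deduce the interior estimate from Lemma~\ref{lem:ihp} and to treat the boundary case by an up-to-the-boundary Dirichlet-growth (Campanato) iteration driven by H3.

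First I would record that H3 implies H2 with the same $\mu_0$. If $\vec u$ solves $L\vec u=0$ in an interior ball $B=B(x_0,r)\subset\Omega$, then $B\cap D=B\cap N=\emptyset$, so \eqref{dirichlet} gives $\int_{B(x_0,\rho)}\abs{D\vec u}^2\le A_0(\rho/r)^{n-2+2\mu_0}\int_{B}\abs{D\vec u}^2$ for $0<\rho\le r$; together with the interior Caccioppoli inequality (from \eqref{eq1.03a}, the first Korn inequality applied to $\zeta\vec u$ for a cutoff $\zeta$, and Poincar\'e) this is Morrey's Dirichlet-growth condition, which yields \eqref{IH}. Hence Lemma~\ref{lem:ihp} is available: if $d(x_0)\ge r$ then $Q\cap\cQ=Q$ and Lemma~\ref{lem:ihp} gives the H\"older-seminorm part of \eqref{eq2.11sc} directly, while the $\abs{\vec u}_0$-part is controlled by that seminorm together with the $L^2$-average of $\vec u$ over $B(x_0,r/2)$.

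The bulk of the work is the case $d(x_0)<r$, where I would adapt the argument of \cite[Theorem~3.3]{Kim} near the boundary. Write $\Omega_\rho=B(x_0,\rho)\cap\Omega$ and $\widetilde Q_\rho=Q_-(X_0,\rho)\cap\cQ$. The core is a mixed Dirichlet-growth estimate: for some $\theta\in(0,1)$ and $C$, $\iint_{\widetilde Q_{\theta\rho}}\abs{D\vec u}^2\le C\theta^{\,n+2\mu_1}\iint_{\widetilde Q_{\rho}}\abs{D\vec u}^2$ for all $\rho\le r$. To establish it at scale $\rho$ I would fix $J=(t_0-\rho^2,t_0)$ and, for each $t\in J$, let $\vec w(\cdot,t)$ solve the elliptic mixed problem in $\Omega_{\rho/2}$ that equals $\vec u(\cdot,t)$ on $\partial B(x_0,\rho/2)\cap\Omega$ and has homogeneous data on $D$ and $N$, and set $\vec v=\vec u-\vec w$. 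The Dirichlet principle gives $\iint_J\int_{\Omega_{\rho/2}}\abs{D\vec w}^2\le C\iint_{\widetilde Q_\rho}\abs{D\vec u}^2$, and H3 with center $x_0$ applied to $\vec w(\cdot,t)$ and integrated in $t$ gives the decay for the $\vec w$-part. The remainder solves $\vec v_t-L\vec v=-\vec w_t$ in $\Omega_{\rho/2}\times J$ with $\vec v=0$ on $(\partial B(x_0,\rho/2)\cap\Omega)\times J$ and homogeneous conditions on $D$ and $N$; an energy estimate plus a Caccioppoli argument would bound $\iint_{\widetilde Q_{\theta\rho}}\abs{D\vec v}^2$ by $\theta^{n+2}$ times the full energy plus a term in $\vec w_t$, and since $\vec w_t(\cdot,t)$ solves $L\vec w_t=0$ with boundary datum $\vec u_t(\cdot,t)$ it is controlled in a negative norm by $\vec u_t$, hence by the local energy of $\vec u$ via the equation and difference quotients in $t$. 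Iterating the displayed decay from scale $r/2$ and invoking the $L^2$-Caccioppoli inequality to replace $\iint_{\widetilde Q_{r/2}}\abs{D\vec u}^2$ by $Cr^{-2}\norm{\vec u}_{\sL_2(Q\cap\cQ)}^2$, I would obtain $\iint_{\widetilde Q_\rho}\abs{D\vec u}^2\le A\, r^{-(n+2)-2\mu_1}\norm{\vec u}_{\sL_2(Q\cap\cQ)}^2\,\rho^{\,n+2\mu_1}$. Because \eqref{typea} holds, the same bound holds with $X_0$ replaced by any $X\in\frac{1}{2}Q\cap\cQ$; combining it with the parabolic Poincar\'e inequality (using the equation to control $\vec u_t$) gives the Campanato-type estimate $\iint_{Q_-(X,\rho)\cap\cQ}\abs{\vec u-\vec c_{X,\rho}}^2\le C\,\rho^{\,n+2+2\mu_1}\,r^{-(n+2)-2\mu_1}\norm{\vec u}^2_{\sL_2(Q\cap\cQ)}$, and the parabolic analogue of Campanato's isomorphism theorem then yields the H\"older seminorm in \eqref{eq2.11sc}; the $\abs{\vec u}_0$-part follows by adding $\abs{\vec c_{X,r/2}}\le Cr^{-(n+2)/2}\norm{\vec u}_{\sL_2(Q\cap\cQ)}$, again by \eqref{typea}.

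The main obstacle I anticipate is carrying the standard parabolic machinery uniformly up to the boundary: one must prove the Caccioppoli, Poincar\'e, and time-difference-quotient estimates on the domains $\Omega_\rho$ that meet $\partial\Omega$ along the mixed set $D\cup N$ --- in particular controlling $\vec w_t$ near the interface $\bar D\cap\bar N$ --- with constants independent of $x_0$, and the measure-density condition \eqref{typea} is precisely what makes the local Poincar\'e and Campanato inequalities uniform. The exponent $\mu_1<\mu_0$ arises from absorbing the error terms across the iteration. Once these localized inequalities are in hand, the iteration is the same as in the interior argument of \cite[Theorem~3.3]{Kim}.
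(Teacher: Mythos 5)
Your overall strategy — use H3 to set up a Campanato-type iteration near the boundary, feed it through a parabolic Caccioppoli/Poincar\'e apparatus, and then invoke \eqref{typea} to make the local averages and the sup-norm estimate uniform — is the same as the paper's, and both you and the paper delegate the interior case to the machinery of Lemma~\ref{lem:ihp}/\cite[Theorem~3.3]{Kim}. But the decomposition you use in the boundary step is not the one the paper takes, and the difference matters.

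The paper isolates a purely \emph{stationary} lemma (Lemma~\ref{lem6.13apx} in the Appendix): for a weak solution of the \emph{elliptic} mixed problem $L\vec u=\vec f$ in $B\cap\Omega$ with $\vec f\in L^{2,\lambda}$, one gets a boundary Morrey/Campanato estimate on $\int_{B(x,r)\cap\Omega}\abs{D\vec u}^2$. Its proof decomposes $\vec u=\vec v+\vec w$ with $\vec v$ solving $L\vec v=\vec f$ with \emph{fully homogeneous} mixed data on $\partial(B\cap\Omega)$ and $\vec w=\vec u-\vec v$ solving $L\vec w=0$, so H3 applies cleanly to $\vec w$. To make the energy estimate for $\vec v$ uniform, $\vec v$ is extended by zero to all of $\Omega$ and the \emph{global} second Korn inequality on $\Omega$ is invoked (via the uniform geometry condition \eqref{app6.17}); this is the key trick that produces uniform constants for the small domains $B(x,r)\cap\Omega$ touching $\bar D\cap\bar N$. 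The parabolic conclusion then follows slicewise by treating $\vec u(\cdot,t)$ as an elliptic solution with forcing $\vec f=\vec u_t(\cdot,t)$ and controlling $\vec u_t$ and $D_x\vec u$ in $L^\infty_tL^2_x$ via the parabolic Caccioppoli estimates.

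You instead do a harmonic replacement at each time, putting the nonhomogeneous boundary data into $\vec w(\cdot,t)$ with $L\vec w=0$, and then treat the remainder $\vec v=\vec u-\vec w$ as a genuinely \emph{parabolic} object with forcing $-\vec w_t$. Two concrete issues arise. First, $\vec v$ does not vanish at the bottom slice $t=t_0-\rho^2$, so the energy/Caccioppoli estimate you propose does not by itself give the $\theta^{n+2}$ factor; you would need to re-insert a Poincar\'e step and absorb the resulting $L^2$-term of $\vec v$ into the iteration. Second, the ``negative norm plus difference quotient'' control of $\vec w_t$ near $\bar D\cap\bar N$ is exactly the kind of estimate that is painful in the mixed setting, and the paper's route sidesteps it entirely by working with $\vec f=\vec u_t$ directly (which is controlled by the higher energy estimates you never need to deduce from $\vec w$). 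Finally, while you are right that \eqref{typea} makes the Poincar\'e and Campanato averages uniform, the uniformity of the \emph{Korn} constant on the truncated domains $B(x,r)\cap\Omega$ is the delicate point, and it is handled in the paper not by a measure-density argument but by the extension-by-zero device; your sketch does not address this, and without it the energy estimate for $\vec v$ in the comparison step has no uniform constant.

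So: the architecture matches, but the comparison operator and the bookkeeping around it are genuinely different. If you switch to the stationary decomposition of Lemma~\ref{lem6.13apx} (zero-data elliptic corrector, extension by zero, global Korn), the gaps above disappear and your iteration then reduces to exactly the paper's ``obvious modifications'' of \cite[Theorem~3.3]{Kim}.
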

\begin{proof}
See Appendix \ref{proof:lem:lhp}.
\end{proof}

\section{Main theorems}	\label{sec:main}

\begin{theorem}		\label{thm1}
Assume the conditions H1 and H2.
Then there exists a unique heat kernel $\vec K(x,y,t)$ for \eqref{MP}.
It satisfies the symmetry relation
\begin{equation}		\label{eq13.26e}
\vec K(x,y,t)=\vec K(y,x,t)^T
\end{equation}
and thus by \eqref{2.13vv}, for any $\vec f \in \sC^\infty_c(\bar \Omega\times [0,\infty))^n$, we have
\begin{equation}			\label{eq13.37r}
\vec u(x,t)=\int_0^t\!\!\!\int_\Omega \vec K(x,y,t-s) \vec f(y,s)\,dy\,ds
\end{equation}
is, for any $T>0$, a unique weak solution in $\sV^{1,0}_2(\Omega\times(0,T))^n$ of the problem \eqref{2.13vv}.
Also, for $\vec \psi \in L^2(\Omega)^n$, the function $\vec u$ given by
\begin{equation}		\label{eq10.06g}
\vec u(x,t)=\int_\Omega \vec K (x,y,t) \vec \psi(y)\,dy
\end{equation}
is, for any $T>0$, a unique weak solution in $\sV^{1,0}_2(\Omega\times(0,T))^n$ of the problem
\begin{equation}		\label{eq10.08a}
\left\{
\begin{array}{ll}
\vec u_t - L \vec u = 0 \quad & \text{in }\; \Omega \times (0,T), \\
\vec u=0 & \text{on }\;  D\times (0,T),\\
\traction(\vec u) =0  & \text{on }\; N \times (0,T),\\
\vec u(\cdot, 0) = \vec \psi  &  \text{on }\; \Omega
\end{array} \right. 
\end{equation}
and if $\vec \psi$ is continuous at $x_0\in \Omega$ in addition, then
\begin{equation}			\label{eq5.23}
\lim_{\substack{(x,t)\to (x_0,0)\\x\in\Omega,\,t>0}}
\int_{\Omega} \vec K(x,y,t) \vec \psi(y)\,dy =\vec \psi(x_0).
\end{equation}
Moreover, the following estimates holds for all $y\in\Omega$, where we use notation $\cQ=\Omega\times [0,\infty)$, $d_y=d(y)$, and $\hat{Y}=(y,0)$.
\begin{enumerate}[1)]
\item
$\norm{\vec K(\cdot,y,\cdot)}_{\sL_p(Q_{+}(\hat{Y},r))} \le C_p r^{-n+(n+2)/p},\quad \forall r \in (0,d_y],\;\; \forall p \in \bigl[1,\frac{n+2}{n}\bigr)$.

\item
$\abs{\set{(x,t)\in \cQ \colon \abs{\vec K(x,y,t)}>\lambda}} \le C \lambda^{-(n+2)/n}, \quad \forall \lambda> d_y^{-n}$.

\item
$\norm{D_x \vec K(\cdot,y,\cdot)}_{\sL_p(Q_{+}(\hat{Y},r))} \le C_p r^{-n-1+(n+2)/p},\;\; \forall r \in (0,d_y],\;\; \forall p\in \bigl[1,\frac{n+2}{n+1}\bigr)$.

\item
$\abs{\set{(x,t) \in \cQ \colon \abs{D_x \vec K(x,y,t)}>\lambda}} \le C\lambda^{-\frac{n+2}{n+1}}, \quad \forall \lambda> d_y^{-n-1}.$

\item
For $X=(x,t)\in\cQ$ satisfying  $\abs{X-\hat{Y}}_{\sP}<d_y/2$, we have
\begin{equation}		\label{eq3.07c}
\abs{\vec K(x,y,t)}\le C \abs{X-\hat{Y}}_{\sP}^{-n}.
\end{equation}
\item
For $X=(x,t)$ and  $X'=(x',t')$  in $\cQ$ satisfying
\[
2\abs{X'-X}_{\sP}<\abs{X-\hat{Y}}_{\sP}<d_y/2,
\]
we have
\begin{equation}		\label{eq3.08d}
\abs{\vec K(x',y,t')-\vec K(x,y,t)}\le C\abs{X'-X}_{\sP}^{\mu_1}\, \abs{X-\hat{Y}}_{\sP}^{-n-\mu_1},
\end{equation}
\end{enumerate}
In the above, $C$ are constants depending only on $n, \kappa_1,\kappa_2, \Omega, D, \mu_0, A_0$ and $C_p$ depend on $p$ in addition.
\end{theorem}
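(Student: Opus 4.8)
The plan is to follow the approach of Cho--Dong--Kim for parabolic systems, adapted to the elasticity setting using the second Korn inequality from H1. The construction proceeds by first building approximating kernels for a regularized operator and then passing to the limit. Concretely, for $\vec\psi \in L^2(\Omega)^n$ (satisfying the compatibility condition $\ip{\vec\psi,\vec v}=0$ for all $\vec v\in\cR$ when $D=\emptyset$), Lemma~\ref{lem:pre} gives a unique weak solution of \eqref{eq2.12mx} with $\vec f = 0$; this defines a semigroup $S(t)\vec\psi = \vec u(\cdot,t)$ on $\vec V$. The heat kernel is to be realized as the Schwartz kernel of $S(t)$: fix $y\in\Omega$, $1\le k\le n$, and for $\rho>0$ let $\vec u_{\rho,y,k}$ solve \eqref{eq2.12mx} with initial data $\vec\psi_{\rho} = \abs{B(y,\rho)}^{-1}\mathbf{1}_{B(y,\rho)}\vec e_k - \vec T_y\vec e_k$ (the subtraction of $\vec T_y\vec e_k$ enforces compatibility when $D=\emptyset$ and is zero otherwise). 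One then sets $\vec K(\cdot,y,\cdot)\vec e_k = \lim_{\rho\to 0}\vec u_{\rho,y,k} + \vec T_y\vec e_k$ and shows this limit exists in the appropriate local topology. The symmetry \eqref{eq13.26e} follows from the self-adjointness of the bilinear form $\sB$ (guaranteed by \eqref{eq1.02a}) via the standard duality argument pairing the equation for $\vec K(\cdot,y,t)$ against $\vec K(\cdot,x,s)$.

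The key estimates 1)--6) are obtained by duality combined with the local boundedness furnished by H2 (through Lemma~\ref{lem:ihp}) and the parabolic embedding \eqref{paraemb}, \eqref{eq5.13d}. The scheme: for a test function $\vec f\in\sC^\infty_c$, the solution operator bound from the energy inequality \eqref{enieq} gives $\tri{\vec u}_{\Omega\times(a,b)} \le C\norm{\vec f}_{\sL_{2(n+2)/(n+4)}}$; by \eqref{eq5.13d} this yields $\norm{\vec u}_{\sL_{2(n+2)/n}} \le C\norm{\vec f}_{\sL_{2(n+2)/(n+4)}}$, which is an $L^q \to L^{q'}$ smoothing estimate with $q = 2(n+2)/(n+4)$. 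Representing $\vec u$ via $\vec K$ and dualizing gives the $\sL_p$ bound of 1) with $p = q' / (q'-1)$ localized to the half-cylinder $Q_+(\hat Y,r)$, after using a scaling/covering argument and the restriction $r\le d_y$ so that interior regularity applies; 2) is the weak-type reformulation of 1). For 3) and 4) one differentiates, using the energy estimate to control $\norm{D\vec u}$ rather than $\norm{\vec u}$, which costs one extra power of $r$ and shrinks the exponent range to $[1,\frac{n+2}{n+1})$. Estimate 5), the pointwise bound $\abs{\vec K(x,y,t)}\le C\abs{X-\hat Y}_\sP^{-n}$ on $\abs{X-\hat Y}_\sP<d_y/2$, comes from applying Lemma~\ref{lem:ihp} on the cylinder $Q_-(X,c\abs{X-\hat Y}_\sP)$ — on which $\vec K(\cdot,y,\cdot)$ solves the homogeneous system since the pole $\hat Y$ lies outside — and inserting estimate 1) to bound the $\sL_2$ norm on the right. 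Estimate 6) is the Hölder version, using the Hölder seminorm bound in Lemma~\ref{lem:ihp} together with 5) applied at the larger scale.

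The statements \eqref{eq13.37r}, \eqref{eq10.06g} and the uniqueness claims follow from property c) of the definition in Section~\ref{sec2.4} together with the uniqueness in Lemma~\ref{lem:pre}: once $\vec K$ is constructed with the stated properties, Duhamel's formula and the linearity of the problem identify \eqref{eq13.37r} as the unique weak solution, and \eqref{eq10.06g} as the solution of \eqref{eq10.08a}. The continuity at the pole \eqref{eq5.23} is proved by splitting $\vec\psi = \vec\psi(x_0) + (\vec\psi - \vec\psi(x_0))$: the constant part is handled by showing $\int_\Omega \vec K(x,y,t)\,dy \to \vec I$ (when $D\neq\emptyset$ this uses that $\vec 1$ is not quite a solution but the defect is controlled; when $D=\emptyset$ one uses the $\vec T_y$ correction), and the remainder is estimated using the continuity of $\vec\psi$ at $x_0$ together with the bound 5) integrated against $\abs{\vec\psi(y)-\vec\psi(x_0)|$, splitting the $y$-integral into a small ball around $x_0$ and its complement.

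The main obstacle I expect is the careful bookkeeping around the case $D=\emptyset$, where the natural solution space $\vec V$ is a subspace of codimension $N=n(n+1)/2$ and the operator is only invertible modulo rigid displacements $\cR$. The projection corrections $\pi_\cR$ and $\vec T_y$ must be inserted consistently so that (i) the defined $\vec K$ still satisfies the weak formulation \eqref{eq2.04x} with test functions subject to the orthogonality constraint, (ii) the symmetry \eqref{eq13.26e} survives the correction, and (iii) the growth estimates near the pole are not spoiled (the rigid-displacement part $\vec T_y\vec e_k$ is smooth and bounded in $y$, so it contributes a harmless lower-order term). A secondary technical point is justifying the interchange of limits (in $\rho$) with the differentiation and localization operations, which requires the a priori estimates 1)--4) to be uniform in $\rho$; this is where one must be careful that the constants coming from \eqref{enieq}, \eqref{eq5.13d}, \eqref{paraemb} and Lemma~\ref{lem:ihp} depend only on the data $n,\kappa_1,\kappa_2,\Omega,D,\mu_0,A_0$ and not on the approximation parameter.
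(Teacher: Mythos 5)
Your overall framework — mollify the initial data, build the mollified kernel from Lemma~\ref{lem:pre}, establish estimates uniformly in the mollification parameter, and pass to a weak limit, inserting the $\pi_\cR$ corrections in the $D=\emptyset$ case — is exactly the paper's strategy, following Cho--Dong--Kim. The symmetry argument and the derivation of estimate~6) from 5) via the H\"older seminorm in Lemma~\ref{lem:ihp} also match.

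However, there are two genuine gaps in the way you propose to close the estimates.

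First, the derivation order of 1)--5) does not work as you have it. You propose to obtain 1) from the $\sL_q\to\sL_{q'}$ smoothing estimate by duality, and then to get the pointwise bound 5) by ``applying Lemma~\ref{lem:ihp} on $Q_{-}(X,c\abs{X-\hat Y}_\sP)$ and inserting estimate~1) to bound the $\sL_2$ norm on the right.'' This is circular in the wrong direction: estimate~1) controls $\norm{\vec K(\cdot,y,\cdot)}_{\sL_p}$ only for $p<\frac{n+2}{n}$, which for every $n\ge 2$ is strictly less than $2$; Lemma~\ref{lem:ihp} requires the $\sL_2$ norm, so 1) cannot feed it. (Moreover, the dual of the smoothing estimate gives a single exponent $p=\frac{2(n+2)}{n+4}$, not the full range $[1,\frac{n+2}{n})$, and for $n\ge 5$ that exponent is not even in the range.) The paper closes the loop in the opposite order: test the backward (dual) problem against $\sL_\infty$ data, use a local $\sL_2\to\sL_\infty$ boundedness estimate for the \emph{dual solution} (Lemma~\ref{lem3a}) to obtain $\norm{\tilde{\vec K}{}^\epsilon(\cdot,y,\cdot)}_{\sL_1(Q_{+}(X_0,R))}\le CR^2$ by duality, then apply a local $\sL_1\to\sL_\infty$ estimate for the \emph{kernel itself} (Lemma~\ref{lem3c}, which is the $p\ge 1$ version of Lemma~\ref{lem3a}, not Lemma~\ref{lem:ihp}) to get the pointwise bound 5) first. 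Only then are the Caccioppoli-type cutoff estimate \eqref{eq11.19a}, the weak-type bound 2), and the $\sL_p$ bounds 1), 3), 4) derived from 5). Your proposal has no mechanism to obtain the initial $\sL_1$ local bound on the kernel.

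Second, your proof of \eqref{eq5.23} is incomplete. You propose to split $\vec\psi=\vec\psi(x_0)+(\vec\psi-\vec\psi(x_0))$ and control the remainder using estimate 5). But 5) only holds on $\abs{X-\hat Y}_\sP<d_y/2$ and gives no decay away from the pole; moreover, the dominating function $\abs{X-\hat Y}_\sP^{-n}$ has $L^1_y$-norm diverging like $\log(1/\sqrt t)$ as $t\downarrow 0$, so it cannot serve as a uniform majorant for the dominated convergence argument you need. The paper instead derives an off-diagonal exponential decay estimate from the differential inequality \eqref{eq10.17jw} (the Davies--Gaffney-type $I(t)=\int e^{2\phi}\abs{\vec u}^2$ argument, which in the elasticity setting requires \eqref{eq1.03a} and the symmetry \eqref{eq1.02a}); this exponential decay, not estimate 5), is what makes the localization in $y$ work. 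Without this ingredient the argument for the pointwise initial-condition \eqref{eq5.23} does not close.
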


\begin{remark}			\label{rmk3.7af}
It will be clear from the proof that besides the estimates 1) - 6) in Theorem~\ref{thm1}, we also have
\begin{enumerate}[{\em 1)}]
\setcounter{enumi}{6}
\item
$\norm{\tilde{\vec K}(\cdot,y,\cdot)}_{\sL_{2(n+2)/n}(\cQ \setminus Q_{+}(\hat{Y},r))} \le C r^{-n/2},\quad \forall r \in (0,d_y]$.

\item
$\tri{\tilde{\vec K}(\cdot,y,\cdot)}_{\cQ \setminus Q_{+}(\hat{Y},r)} \le C r^{-n/2},\quad \forall r \in (0,d_y]$.
\end{enumerate}
Here, we use the notation
\[
\tilde{\vec K}(x,y,t)=
\begin{cases} \vec K(x,y,t) & \text{if $D\neq\emptyset$,}
\\
\vec K(x,y,t)- \pi_\cR(\delta_y \vec I)(x) &\text{if $D = \emptyset$.}
\end{cases}
\]
where $\pi_\cR$ is as defined in \eqref{pr}; see also \eqref{eq5.22pj}.
Moreover, if $\Omega$ is such that it admits a bounded linear trace operator from $ W^{1,2}(\Omega)$ to $L^2(\partial\Omega)$, then it can be shown that
for $\vec f \in \sL_{q_1,r_1}(\Omega\times (0,T))^n$ and $\vec g \in \sL_{q_2,r_2}(N\times (0,T))^n$, where $q_k$ and $r_k$ ($k=1,2$) are subject to the conditions of \cite[Theorem~5.1, p. 170]{LSU}, the function $\vec u$ defined by
\[
\vec u(x,t)=\int_{0}^t\!\!\!\int_\Omega \vec K(x,y,t-s) \vec f(y,s)\,dy\,ds+ \int_{0}^t\!\!\!\int_\Omega \vec K(x,y,t-s) \vec g(y,s)\,dS_y\,ds
\]
is a unique weak solution in $\sV^{1,0}_2(\Omega\times (0,T))^n$ of the problem
\[
\left\{
\begin{aligned}
\sL\vec u= \vec f\quad &\text{in }\; \Omega\times (0,T)\\
\vec u= 0 \quad&\text{on }\;  D \times (0,T)\\
\traction(\vec u)= \vec g \quad&\text{on }\; N \times (0,T)\\
\vec u(\cdot, 0)=0 \quad&\text{on }\; \Omega.
\end{aligned}
\right.
\]
The proof is similar to that of the representation formula \eqref{eq13.37r} and is omitted.
\end{remark}

\begin{theorem}			\label{thm3}
Assume H1 and H3. 
There exists the heat kernel $\vec K(x,y,t)$ for \eqref{MP} and it  satisfies all the properties stated in Theorem~\ref{thm1}.
Moreover, for $x, y \in \Omega$ and $t>0$, we have the Gaussian bound
\begin{equation}		\label{eq3.07ktx}
\abs{\vec K(x,y,t)}\le \frac{C}{\left(\sqrt{t}\wedge \diam \Omega \right)^n}\exp\left\{\frac{-\vartheta \abs{x-y}^2}{t}\right\},
\end{equation}
where $C=C(n, \kappa_1, \kappa_2, \Omega, D, \mu_0, A_0)$ and $\vartheta=\vartheta(\kappa_1,\kappa_2)>0$.
Furthermore, for $X=(x,t)$ and $X'=(x',t')$ in $\cQ$ satisfying
$\abs{X'-X}_{\sP} < \frac{1}{2} \left(\abs{X-\hat{Y}}_{\sP} \wedge \diam \Omega\right)$,
we have
\begin{multline}			\label{eq22.00k}
\abs{\vec K(x',y,t')-\vec K(x,y,t)} \le C \left\{\frac{\abs{X'-X}_{\sP}}{\abs{X-\hat{Y}}_{\sP}\wedge \diam \Omega}\right\}^{\mu_1}\\
\times \frac{1}{\left(\sqrt{t}\wedge \diam \Omega\right)^n} \exp\left\{-\frac{\vartheta \abs{x-y}^2}{4t}\right\},
\end{multline}
where $\mu_1$ is as in Lemma~\ref{lem:lhp}.
\end{theorem}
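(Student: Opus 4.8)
The plan is to combine Theorem~\ref{thm1} with the up-to-the-boundary regularity of Lemma~\ref{lem:lhp} and Davies' exponential perturbation method. Since H3 implies H2 (the lemma preceding Lemma~\ref{lem:lhp}) and since $(\epsilon,\delta)$-domains satisfy \eqref{typea}, both Theorem~\ref{thm1} and Lemma~\ref{lem:lhp} are available; in particular the heat kernel exists, is unique and symmetric, and satisfies all the properties listed in Theorem~\ref{thm1}, so only \eqref{eq3.07ktx} and \eqref{eq22.00k} remain. The first thing I would establish is the \emph{global on-diagonal bound} $\abs{\vec K(x,y,t)}\le C(\sqrt t\wedge\diam\Omega)^{-n}$. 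For $\vec\psi\in L^2(\Omega)^n$ the function $\vec u=\int_\Omega\vec K(\cdot,y,t)\vec\psi(y)\,dy$ solves \eqref{eq10.08a}; testing with $\vec u$ and using $\sB(\vec u,\vec u)\ge0$ shows that $s\mapsto\norm{\vec u(\cdot,s)}_{L^2(\Omega)}$ is non-increasing. Applying the local boundedness part of Lemma~\ref{lem:lhp} on $Q_{-}((x,t),r)$ with $r=\tfrac14(\sqrt t\wedge\diam\Omega)<\sqrt t$, together with $\norm{\vec u(\cdot,s)}_{L^2}\le\norm{\vec\psi}_{L^2}$, gives $\norm{K_t}_{L^2\to L^\infty}\le C(\sqrt t\wedge\diam\Omega)^{-n/2}$ for the operator $K_t\colon\vec\psi\mapsto\vec u(\cdot,t)$; since $K_t$ is self-adjoint (by \eqref{eq13.26e}), duality and the semigroup property upgrade this to $\norm{K_t}_{L^1\to L^\infty}\le C(\sqrt t\wedge\diam\Omega)^{-n}$, whence the claim, the range $t\ge(\diam\Omega)^2$ being covered by splitting $\vec K(x,y,t)=\int_\Omega\vec K(x,z,t-\tau)\vec K(z,y,\tau)\,dz$ with $\tau=\tfrac12(\diam\Omega)^2$ and using $\norm{K_s}_{L^2\to L^2}\le1$ and $\abs{\Omega}\le C(\diam\Omega)^n$. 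When $D=\emptyset$ one carries the rigid-motion component $\pi_\cR(\delta_y\vec I)$ along; it contributes only an $O((\diam\Omega)^{-n})$ term, consistent with the bound.

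For the off-diagonal decay, fix a Lipschitz $\phi$ on $\bR^n$ with $\norm{D\phi}_{L^\infty}\le1$ and $\rho\in\bR$, and test the weak formulation of \eqref{eq10.08a} (for $\vec u=K_t\vec\psi$) with $e^{2\rho\phi}\vec u$, which is an admissible test function because $e^{2\rho\phi}$ is smooth, positive and bounded on $\bar\Omega$ and preserves the Dirichlet condition on $D$. After the first-order (in $\rho$) cross terms cancel, one is left with
\begin{equation*}
\tfrac12\tfrac{d}{ds}\bignorm{e^{\rho\phi}\vec u(\cdot,s)}_{L^2}^2+\int_\Omega\sB(e^{\rho\phi}\vec u,e^{\rho\phi}\vec u)\,dx=\rho^2\int_\Omega e^{2\rho\phi}\,a^{\alpha\beta}_{ij}\,(D_\beta\phi)\,u^j\,(D_\alpha\phi)\,u^i\,dx,
\end{equation*}
and, discarding the nonnegative term $\int_\Omega\sB(e^{\rho\phi}\vec u,e^{\rho\phi}\vec u)\ge0$ and bounding the right-hand side by $\kappa_2\rho^2\norm{e^{\rho\phi}\vec u}_{L^2}^2$, Gronwall's inequality gives the weighted estimate $\bignorm{e^{\rho\phi}K_t\vec\psi}_{L^2}\le e^{\kappa_2\rho^2 t}\bignorm{e^{\rho\phi}\vec\psi}_{L^2}$. (No Korn inequality and no lower-order correction are needed here since the good term is simply thrown away; the same computation is valid for $D=\emptyset$.) Combining this weighted $L^2$ estimate with the on-diagonal bound of the previous step by the now-standard argument of Davies --- interpolating the weight over a few sub-intervals of $(0,t)$ to produce a weighted $L^1\to L^\infty$ bound --- one obtains $\abs{\vec K(x,y,t)}\le e^{-\rho(\phi(x)-\phi(y))}C(\sqrt t\wedge\diam\Omega)^{-n}e^{\kappa_2\rho^2 t}$ for $t\le(\diam\Omega)^2$; choosing $\phi(\cdot)=\min\{\abs{\cdot-y},\abs{x-y}\}$ and optimizing in $\rho$ yields \eqref{eq3.07ktx} with $\vartheta=\vartheta(\kappa_1,\kappa_2)>0$, the range $t\ge(\diam\Omega)^2$ being covered by the on-diagonal bound together with $\abs{x-y}\le\diam\Omega$.

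The estimate \eqref{eq22.00k} follows by the same mechanism, now invoking the \emph{H\"older} part of Lemma~\ref{lem:lhp}: since $\vec K(\cdot,y,\cdot)$ is a weak solution of the homogeneous parabolic mixed problem away from $(y,0)$, for $\rho\simeq\abs{X-\hat Y}_{\sP}\wedge\diam\Omega$ chosen so that $X'\in\tfrac12 Q_{-}(X,\rho)$ (possible by the hypothesis $\abs{X'-X}_{\sP}<\tfrac12(\abs{X-\hat Y}_{\sP}\wedge\diam\Omega)$) one has
\[
\abs{\vec K(x',y,t')-\vec K(x,y,t)}\le A_1\,\rho^{-\mu_1-(n+2)/2}\,\bignorm{\vec K(\cdot,y,\cdot)}_{\sL_2(Q_{-}(X,\rho)\cap\cQ)}\,\abs{X'-X}_{\sP}^{\mu_1},
\]
and one bounds the $\sL_2$-norm by integrating the Gaussian bound \eqref{eq3.07ktx} over $Q_{-}(X,\rho)$, on which $\abs{z-y}\ge\tfrac12\abs{x-y}$ and $s\ge\tfrac12 t$ once $\rho$ is taken small enough; this produces precisely the right-hand side of \eqref{eq22.00k}, the factor $\tfrac14$ in the exponent absorbing the comparison of parabolic distances. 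A short case analysis in the relative sizes of $\abs{x-y}$, $\sqrt t$ and $\diam\Omega$ reduces everything to this principal case (where $\sqrt t\simeq\sqrt{t'}$), the remaining cases being handled directly from \eqref{eq3.07ktx} applied to each term.

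The technically delicate points are: (i) leaving the interior estimates of Theorem~\ref{thm1} for the \emph{global} on-diagonal bound --- because the elliptic system admits neither a maximum principle nor an $L^1$-contraction, this must be routed entirely through $L^2$ and the up-to-the-boundary De Giorgi--Nash--Moser estimate of Lemma~\ref{lem:lhp}; (ii) the bookkeeping in the pure Neumann case $D=\emptyset$, where the semigroup does not decay and the rigid-motion projection $\pi_\cR(\delta_y\vec I)$ must be carried through all the $L^2$ inequalities (harmlessly, since it is $O((\diam\Omega)^{-n})$ and the Gaussian regime only requires $t\lesssim(\diam\Omega)^2$); and (iii) the passage from the weighted $L^2$ estimate to the weighted $L^1\to L^\infty$ bound, which is the heart of Davies' argument. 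I expect (i) and (iii) to be where the real work lies.
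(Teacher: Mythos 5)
Your proposal is correct and follows essentially the same route as the paper: both use Lemma~\ref{lem:lhp} for the global (up-to-the-boundary) local boundedness, the exponential perturbation differential inequality \eqref{eq10.17jw} (your version via completing the square in $\sB(e^{\rho\phi}\vec u,e^{\rho\phi}\vec u)$ is a cleaner equivalent of the paper's Young-inequality computation), Davies' method to upgrade to the Gaussian bound (the paper delegates this step to \cite[Theorem~3.1]{CDK2} with $R_{\max}=\diam\Omega$ rather than writing it out), and then the H\"older estimate \eqref{eq2.11sc} combined with a case analysis on $\abs{x-y}$ vs.\ $\sqrt t$ vs.\ $\diam\Omega$ for \eqref{eq22.00k}.
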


\section{Applications}		\label{sec:apps}
\subsection{Some examples}		\label{sec4.1}
\begin{enumerate}[1.]
\item
If the coefficients are constant, then it is well known that H2 holds with $\mu_0=1$ and $A_0=A_0(n, \kappa_1, \kappa_2)$.
In fact, it is also known that if the coefficients belong to the VMO class, then H2 holds with $\mu_0$ and $A_0$ depending on the BMO modulus of the coefficients as well as on $n, \kappa_1, \kappa_2$.
Therefore, the conclusions of Theorem~\ref{thm1} are valid in these cases.

\item
If the coefficients belong to the VMO class, the domain $\Omega$ is of class $C^1$, and $\bar D \cap \bar N =\emptyset$, then it is known that H3 holds.
Therefore, the conclusions of Theorem~\ref{thm3} are valid in this case.

\item
If $n=2$, then it is well known that H2 holds with $\mu_0=\mu_0(\kappa_1, \kappa_2)$ and $A_0=A_0(\kappa_1, \kappa_2)$.
Therefore, the conclusions of Theorem~\ref{thm1} are valid.
In fact, if $\Omega$ is a Lipschitz domain and $D$ is a (possibly
empty) set satisfying the \emph{corkscrew condition}, i.e.,  for each
$ x\in \partial D$ (where the boundary is taken with respect to
$\partial \Omega$) and $ r \in (0, r_0)$, we may find  $ x_r \in D$ so
that $\abs{x- x_r} \le r$ and $\dist (x_r, \partial \Omega \setminus
D) \ge M^{-1} r$, where $r_0>0$ and $M>0$ are constants, then it is known that H3 holds; see \cite{TKB}. 
Therefore, the conclusions of Theorem~\ref{thm3} are valid in this case.
\end{enumerate}

\subsection{Green's function for the elliptic system}		\label{sec4.2}

We say that an $n\times n$ matrix valued function $\vec G(x,y)$ is the Green's function of $L$ for \eqref{MP} if it satisfies the following properties:
\begin{enumerate}[i)]
\item
$\vec G(\cdot,y) \in W^{1,1}_{loc}(\Omega)$ and $\vec G(\cdot,y) \in W^{1,2}(\Omega\setminus B(y,r))$ for all $y\in\Omega$ and $r>0$.
In the case when $D=\emptyset$, we require $\int_\Omega \vec v(x)^T \vec G(x,y)\,dx =0$ for any $\vec v \in \cR$.
\item
$\vec G(\cdot, y)$ is a weak solution of
\[
-L\vec G(\cdot,y)=\delta_y \vec I - \vec T_y\text{ in }\Omega,\quad \vec G(\cdot,y)=0\text{ on }D,\quad \traction(\vec G(\cdot,y))=0\text{ on }N
\]
in the  sense that we have the identity
\[
\int_{\Omega}a^{\alpha\beta}_{ij} \frac{\partial}{\partial x_\beta} G_{jk}(\cdot,y) \frac{\partial \phi^i}{\partial x_\alpha}\,dx = \phi^k(y).
\]
for any $\vec \phi =(\phi^1,\ldots, \phi^n)^T \in C^\infty(\bar\Omega)^n\cap \vec V$; see \eqref{eq2.04sj} and \eqref{eqv} for the definition of $\vec T_y$ and $\vec V$.

\item
For any $\vec f \in C^\infty_c(\bar\Omega)^n \cap \vec V$, the function $\vec u$ defined by
\begin{equation}	\label{choej}
\vec u(x)=\int_\Omega \vec G(y,x)^T \vec f(y)\,dy
\end{equation}
is the weak solution in $\vec V$ of the problem
\begin{equation}		\label{hee}
\left\{
\begin{array}{lll}
L \vec u = \vec f  &\text{in}& \Omega, \\
\vec u=0 & \text{on}& D,\\
\traction(\vec u) =0  & \text{on} & N.
\end{array} \right. 
\end{equation}
\end{enumerate}
We note that with aid of the second Korn inequality \eqref{eq2.03a}, which is valid for any $\vec u \in \vec V$, the unique solvability of the problem \eqref{hee} in the space $\vec V$ is an immediate consequence of Lax-Milgram lemma. 
We also note that the property iii) of the above definition together with the requirement
\[
\int_\Omega \vec v(x)^T \vec G(x,y)\,dx =0,\quad \forall \vec v \in \cR
\]
gives the uniqueness of the Green's function.

\begin{theorem}		\label{thm4}
Assume the conditions H1 and H2.
Then, there exists a unique  Green's function $\vec G(x,y)$ for \eqref{MP}.
We have
\begin{equation}
\label{eq:E23}
\vec G(y,x)= \vec G(x,y)^T
\end{equation}
and thus by \eqref{choej}, for any $\vec f \in C^\infty(\bar \Omega)^n \cap \vec V$, we find
\[
\vec u(x)=\int_\Omega \vec G(x,y) \vec f(y)\,dy
\]
is a unique weak solution in $\vec V$ of the problem \eqref{hee}.
If we assume H3 instead of H2, then for any $x, y \in \Omega$, we have
\begin{enumerate}[i)]
\item
$n=2$
\begin{equation}						\label{eq13.03k}
\abs{\vec G(x,y)} \le C\left\{1+\ln \left(\frac{\diam \Omega}{\abs{x-y}}\right)\right\},
\end{equation}

\item
$n\ge 3$
\begin{equation}						\label{eq13.03l}
\abs{\vec G(x,y)} \le C\abs{x-y}^{2-n},
\end{equation}
\end{enumerate}
and moreover, for any $x,y \in\Omega$ satisfying $\abs{x-x'}<\frac{1}{2}\abs{x-y}$, we have
\begin{equation}				\label{eq15.13g}
\abs{\vec G(x',y)-\vec G(x,y)} \le C  \abs{x'-x}^{\mu_1} \abs{x-y}^{2-n-\mu_1}.
\end{equation}
In the above, $C$ is a constant depending on the prescribed parameters and $\diam \Omega$ and $\mu_1 \in (0,1)$ is as in Lemma~\ref{lem:lhp}.
\end{theorem}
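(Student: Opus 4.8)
The plan is to build the Green's function as the time integral of the modified heat kernel of Theorem~\ref{thm1}, namely to set $\vec G(x,y):=\int_0^\infty \tilde{\vec K}(x,y,t)\,dt$ with $\tilde{\vec K}$ as in Remark~\ref{rmk3.7af}, and to transfer every property of $\vec G$ from those of $\vec K$. The first task is convergence of this improper integral. Fix $y$ and put $R_0=d_y\wedge\diam\Omega$. On $(0,R_0^2)$, estimates 1) and 3) of Theorem~\ref{thm1} together with estimates 7) and 8) of Remark~\ref{rmk3.7af}, applied on the dyadic parabolic shells $Q_{+}(\hat Y,2^{-k}R_0)\setminus Q_{+}(\hat Y,2^{-k-1}R_0)$ and combined through Minkowski's integral inequality, give $\int_0^{R_0^2}\norm{\tilde{\vec K}(\cdot,y,t)}_{W^{1,1}(\Omega)}\,dt<\infty$ and $\int_0^{R_0^2}\norm{\tilde{\vec K}(\cdot,y,t)}_{W^{1,2}(\Omega\setminus B(y,r))}\,dt<\infty$ for every $r>0$. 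On $t\ge R_0^2$ I would use exponential decay of the modified semigroup: for $\vec\psi\in\vec V$ and $\vec u(t)=\int_\Omega\tilde{\vec K}(\cdot,y,t)\vec\psi(y)\,dy$ the energy identity gives $\tfrac12\tfrac{d}{dt}\norm{\vec u(t)}_{L^2(\Omega)}^2=-\int_\Omega\sB(\vec u,\vec u)\le-c\norm{\vec u(t)}_{L^2(\Omega)}^2$ by \eqref{eq2.08q} and the Friedrichs (or, for $D=\emptyset$, Poincar\'e) inequality on $\vec V$, hence $\norm{\tilde{\vec K}(\cdot,y,t)}_{L^2(\Omega)}\le Ce^{-ct}$ for $t\ge R_0^2$; re-inserting this into the energy inequality \eqref{enieq} on slabs $\Omega\times(t,t+1)$ controls $\int_t^{t+1}\norm{D_x\tilde{\vec K}(\cdot,y,s)}_{L^2(\Omega)}\,ds$ by $Ce^{-ct}$, so $\int_{R_0^2}^\infty\norm{\tilde{\vec K}(\cdot,y,t)}_{W^{1,2}(\Omega)}\,dt<\infty$. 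Thus $\vec G(\cdot,y)\in W^{1,1}_{loc}(\Omega)\cap W^{1,2}(\Omega\setminus B(y,r))$, and when $D=\emptyset$ the relation $\int_\Omega\vec v^T\tilde{\vec K}(\cdot,y,t)\,dx=0$ for $\vec v\in\cR$ passes to the limit, giving the normalization in property~i).

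For property ii) I would test \eqref{eq2.04x} against $\vec\phi(x)\chi_T(t)$, where $\vec\phi\in C^\infty(\bar\Omega)^n\cap\vec V$ and $\chi_T$ is a smooth cutoff equal to $1$ on $[0,T]$ and to $0$ past $T+1$, and let $T\to\infty$: the time term $-\int_T^{T+1}\chi_T'(t)\bigl(\int_\Omega\tilde K_{ik}(\cdot,y,t)\phi^i\bigr)dt$ vanishes by the exponential decay above, the elliptic term tends to $\int_\Omega a^{\alpha\beta}_{ij}\partial_\beta G_{jk}(\cdot,y)\,\partial_\alpha\phi^i\,dx$ (when $D=\emptyset$ one uses that rigid displacements are annihilated by $\sB$ and are $L^2$-orthogonal to $\vec\phi\in\vec V$, so $\vec K$ may be replaced by $\tilde{\vec K}$), and the right side is $\phi^k(y,0)=\phi^k(y)$; this is precisely the identity in ii), the $\vec T_y$-term being invisible against $\vec\phi\perp\cR$. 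Property iii) then follows either by combining part c) of the heat-kernel definition with Fubini and the limit $T\to\infty$, or directly from ii) and the Lax--Milgram uniqueness of \eqref{hee} in $\vec V$ (via \eqref{eq2.03a}). The symmetry \eqref{eq:E23} is obtained by integrating \eqref{eq13.26e} in $t$, noting that $\pi_\cR(\delta_y\vec I)(x)$ is symmetric under $(x,y)\mapsto(y,x)^T$ because $\cR$ carries a real orthonormal basis; uniqueness of $\vec G$ then follows from iii) and the normalization against $\cR$, exactly as noted after the definition. When $D\neq\emptyset$ all of this simplifies, since $\tilde{\vec K}=\vec K$ and $\vec T_y=0$.

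Now assume H1 and H3, so that H2 holds and Theorem~\ref{thm3} supplies the Gaussian bound \eqref{eq3.07ktx} together with \eqref{eq22.00k}. Fix $x\ne y$, put $R=\abs{x-y}\wedge\diam\Omega$, and split $\int_0^\infty\abs{\tilde{\vec K}(x,y,t)}\,dt$ over $(0,R^2)$, $(R^2,(\diam\Omega)^2)$ and $((\diam\Omega)^2,\infty)$, discarding empty ranges. On $(0,(\diam\Omega)^2)$ one has $\abs{\tilde{\vec K}(x,y,t)}\le C(\sqrt t\wedge\diam\Omega)^{-n}e^{-\vartheta\abs{x-y}^2/t}+C$, the additive constant being the bounded correction $\pi_\cR(\delta_y\vec I)(x)$ present only when $D=\emptyset$; the substitution $s=\abs{x-y}^2/t$ bounds $\int_0^{R^2}$ by $C\abs{x-y}^{2-n}$, the middle integral $\int_{R^2}^{(\diam\Omega)^2}t^{-n/2}\,dt$ equals $C\abs{x-y}^{2-n}$ for $n\ge3$ and $C\ln(\diam\Omega/\abs{x-y})$ for $n=2$, and the additive constant contributes $O((\diam\Omega)^2)$, absorbed by letting $C$ depend on $\diam\Omega$ as allowed in the statement; the tail $\int_{(\diam\Omega)^2}^\infty$ is controlled by the exponential decay from the first paragraph. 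This yields \eqref{eq13.03l} and \eqref{eq13.03k}. The H\"older bound \eqref{eq15.13g} is obtained the same way by integrating \eqref{eq22.00k} in $t$ with $X=(x,t)$, $X'=(x',t)$ (so the parabolic increment is $\abs{x-x'}$), which inserts an extra factor $(\abs{x-x'}/\abs{x-y})^{\mu_1}$; on the narrow band of $t$ on which the admissibility hypothesis $\abs{X'-X}_{\sP}<\tfrac12(\abs{X-\hat Y}_{\sP}\wedge\diam\Omega)$ of \eqref{eq22.00k} fails, one instead bounds $\tilde{\vec K}(x,y,t)$ and $\tilde{\vec K}(x',y,t)$ separately by \eqref{eq3.07ktx} and observes that there $\abs{x-x'}^{\mu_1}$ already dominates the relevant power of the scale.

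I expect the main obstacle to be the exponential decay of the modified heat semigroup underpinning the first two paragraphs: the $L^2$-decay is cheap, coming from the energy identity and the coercivity of $\sB$ on $\vec V$ (hence ultimately from the second Korn inequality and Friedrichs/Poincar\'e), but upgrading it to the $W^{1,2}(\Omega)$-decay and to the $W^{1,1}_{loc}$ and $W^{1,2}(\Omega\setminus B(y,r))$ membership required in property~i) demands chaining it carefully with the local parabolic regularity of Lemma~\ref{lem:ihp} and the energy inequality \eqref{enieq}, with constants uniform in $t$. A lesser, purely bookkeeping difficulty is that the heat-kernel estimates of Theorem~\ref{thm1} and Remark~\ref{rmk3.7af} are stated only for radii $r\le d_y$ and near the pole, so they must be stitched to the full time integral by interior parabolic estimates in the region away from $\hat Y$.
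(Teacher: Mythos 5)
Your proposal is correct and follows essentially the same approach as the paper: define $\vec G(x,y)=\int_0^\infty\tilde{\vec K}(x,y,t)\,dt$, establish convergence via near-pole estimates plus exponential $L^2$-decay of the modified semigroup from the coercivity of $\sB$ on $\vec V$, transfer properties i)--iii) and the symmetry from the heat kernel, and derive the pointwise and H\"older bounds under H3 by splitting the time integral and invoking the Gaussian estimates \eqref{eq3.07ktx} and \eqref{eq22.00k}. The paper organizes the argument through the auxiliary object $\hat{\vec K}(x,y,t)=\int_0^t\tilde{\vec K}(x,y,s)\,ds$ (Lemma~\ref{lem4.10a}) and a weak-compactness passage to the limit for property iii), relying on \cite{DK09} for several quantitative estimates that you sketch more explicitly, but the decomposition, the role of the spectral gap, and the time-integral of the parabolic H\"older estimate are the same.
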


\begin{corollary}
Let $n=2$ and assume H1.
Then, there exists the Green's function for \eqref{MP} that satisfies \eqref{eq:E23}.
Moreover, if $\Omega$ is a Lipschitz domain and $D$ satisfies the corkscrew condition described in Section~\ref{sec4.1}, then the estimates \eqref{eq13.03k}, \eqref{eq15.13g} hold, and $\mu_1$ and $C$ are constants determined by  $\kappa_1$, $\kappa_2$, $\Omega$, and $D$ .
\end{corollary}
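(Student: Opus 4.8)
The plan is to deduce this corollary directly from Theorem~\ref{thm4}, using the observations recorded in item~3 of Section~\ref{sec4.1}; there is no new analytic content, only a verification that the hypotheses H2 and H3 of Theorem~\ref{thm4} come for free when $n=2$.

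First, for the existence and symmetry assertion. Since $n=2$, it is classical (see item~3 of Section~\ref{sec4.1}) that every weak solution of $L\vec u=0$ is interior H\"older continuous with the estimate \eqref{IH} holding for some $\mu_0=\mu_0(\kappa_1,\kappa_2)$ and $A_0=A_0(\kappa_1,\kappa_2)$; that is, H2 is automatic. Together with the standing assumption H1, Theorem~\ref{thm4} (equivalently, Theorem~\ref{thm1} at the level of the heat kernel) then produces a unique Green's function $\vec G(x,y)$ for \eqref{MP} satisfying the symmetry relation \eqref{eq:E23} and the representation formula for solutions of \eqref{hee}. This already gives the first sentence of the corollary.

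Next, assume in addition that $\Omega$ is a Lipschitz domain and that $D$ satisfies the corkscrew condition described in Section~\ref{sec4.1}. By the result of Taylor et al.~\cite{TKB} recalled there, the Dirichlet property H3 holds in this setting, now with $\mu_0$ and $A_0$ depending on $\kappa_1$, $\kappa_2$, $\Omega$, and $D$. Invoking the second part of Theorem~\ref{thm4} (which requires H1 and H3) yields the logarithmic bound \eqref{eq13.03k}, since we are in dimension $n=2$, and the H\"older estimate \eqref{eq15.13g} with exponent $\mu_1$ furnished by Lemma~\ref{lem:lhp}. Finally, since $n=2$ is fixed and $\mu_0,A_0$ are themselves determined by $\kappa_1,\kappa_2,\Omega,D$ in this case, the constants $C$ and the exponent $\mu_1$ coming out of Theorem~\ref{thm4} and Lemma~\ref{lem:lhp} reduce to quantities depending only on $\kappa_1$, $\kappa_2$, $\Omega$, and $D$ (with $\diam\Omega$ absorbed into the dependence on $\Omega$), as claimed. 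The only point requiring care is this bookkeeping of parameter dependence; there is no genuine obstacle, as the substantive work is entirely contained in Theorem~\ref{thm4} and in the planar regularity facts H2/H3.
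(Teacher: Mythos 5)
Your proposal matches the paper's own proof exactly: both deduce the corollary by invoking Example~3 of Section~\ref{sec4.1} to obtain H2 (automatically, for $n=2$) and H3 (under the Lipschitz and corkscrew hypotheses), and then applying Theorem~\ref{thm4}. You have merely spelled out in more detail the parameter bookkeeping that the paper leaves implicit; the approach and content are identical.
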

\begin{proof}
Follows from Example~3 in Section~\ref{sec4.1} and Theorem~\ref{thm4}.
\end{proof}

\section{Proofs of main theorems}		\label{sec:pf}
\subsection{Proof of Theorem~\ref{thm1}}
In the proof, we denote by $C$ a constant depending on the prescribed parameters $n, \kappa_1, \kappa_2, \mu_0, A_0$ as well as on $\Omega$ and $D$; if it depends also on some other parameters such as $p$, it will be written as $C_p$, etc.

We fix a $\Phi \in C_c^\infty(\bR^{n})$ such that $\Phi$ is supported in $B(0,1)$, $0\le \Phi \le 2$, and $\int_{\bR^{n}} \Phi=1$.
Let $y\in \Omega$ be fixed but arbitrary.
For $\epsilon>0$, we define
\[
\Phi_{y,\epsilon}(x)=\epsilon^{-n}\Phi((x-y)/\epsilon)
\]
and let $\vec v_\epsilon=\vec v_{\epsilon, y,k}$ be a unique weak solution in $\sV^{1,0}_2(\Omega\times(0,T))^n$ of the problem
\begin{equation} \label{eq10.03a}
\left\{
\begin{array}{ll}
\vec u_t- L \vec u = 0  & \text{in }\; \Omega \times (0,T),\\
\vec u=0 & \text{on }\;  D\times (0,T),\\
\traction(\vec u) =0 & \text{on }\; N \times (0,T),\\
\vec u(\cdot, 0) = \Phi_{y,\epsilon} \vec e_k  &  \text{on }\; \Omega,
\end{array}
\right. 
\end{equation}
where $\vec e_k$  is the $k$-th unit column vector in $\bR^n$; see Lemma~\ref{lem:pre}.
By the uniqueness, we find that $\vec v_\epsilon$ does not depend on a particular choice of $T$ and thus by setting $\vec v_\epsilon(x,t) = 0$ for $t<0$ and letting $T\to \infty$, we may assume the $\vec v_\epsilon$ is defined on the entire $\Omega\times (-\infty,\infty)$.
We define \emph{the mollified heat kernel} $\vec K^\epsilon(x,y,t)$ to be an $n\times n$ matrix valued function whose $k$-th column is $\vec v_{\epsilon,y,k}(x,t)$; i.e.,
\[
K^\epsilon_{jk}(x,y,t)=v^j_\epsilon(x,t)=v^j_{\epsilon, y,k}(x,t).
\]
For $\vec f \in \sC^\infty_c(\bar \Omega\times (-\infty,\infty))^n$, fix $a, b$ so that $a<0<b$ and $\supp \vec f \subset \bar\Omega\times (a,b)$.
Let $\vec u$ be a weak solution in $\sV_2^{1,0}(\Omega\times(a,b))^n$ of the backward problem
\[
\left\{\begin{array}{ll}
-\vec u_t - L \vec u =\vec f \quad & \text{in }\; \Omega \times (a,b)\\
\vec u=0 & \text{on }\;  D\times (a,b)\\
\traction(\vec u) =0  & \text{on }\; N \times (a,b)\\
\vec u(\cdot, b) = 0 &  \text{on }\; \Omega.
\end{array}
\right. 
\]
Then, it is easy to see that we have
\begin{equation}		\label{eq11.05b}
\int_\Omega \Phi_{y,\epsilon}(x) u^k(x,0)\,dx=\int_0^b\!\!\! \int_\Omega K^\epsilon_{ik}(x,y,t) f^i(x,t)\,dx\,dt.
\end{equation}
Next, we define $\tilde{\vec K}{}^\epsilon(x,y,t)$ by
\begin{equation}	\label{eq5.09ck}
\tilde{\vec K}{}^\epsilon(x,y,t) :=
\begin{cases} \vec K^\epsilon(x,y,t) & \text{if $D\neq\emptyset$,}
\\
\vec K^\epsilon(x,y,t)-1_{[0,\infty)}(t) \,\pi_\cR(\Phi_{y,\epsilon} \vec I)(x) &\text{if $D = \emptyset$,}
\end{cases}
\end{equation}
where $\pi_\cR(\Phi_{y,\epsilon} \vec I)(x)$ is an $n\times n$ matrix whose $k$-th column is $\pi_\cR(\Phi_{y,\epsilon} \vec e_k)(x)$.
We set $\tilde{\vec v}_\epsilon=\tilde{\vec v}_{\epsilon,y,k}$ to be the $k$-th column of $\tilde{\vec K}{}^\epsilon(\cdot,y,\cdot)$.
It is easy to verify that $\tilde{\vec v}_\epsilon(\cdot,t) \in \vec V$ for a.e. $t>0$.
Therefore, for any $T>0$, it is the weak solution of the problem (see Section~\ref{sec:weaksol} and Lemma~\ref{lem:pre})
\begin{equation} \label{eq10.03ap}
\left\{
\begin{array}{ll}
\vec u_t- L \vec u = 0  & \text{in }\; \Omega \times (0,T)\\
\vec u=0 & \text{on }\;  D\times (0,T)\\
\traction(\vec u) =0 & \text{on }\; N \times (0,T)\\
\vec u(\cdot, 0) = \vec \psi_{\epsilon, y,k}  &  \text{on }\; \Omega\\
\vec u(\cdot,t) \in \vec V& \text{for a.e.}\;  t\in (0,T),
\end{array}
\right. 
\end{equation}
where we denote
\[
\vec \psi_{\epsilon,y,k}=
\begin{cases} \Phi_{\epsilon}\,\vec e_k & \text{if $D\neq \emptyset$,}
\\
\Phi_{y,\epsilon}\,\vec e_k -\pi_\cR(\Phi_{y,\epsilon}\,\vec e_k) &\text{if $D = \emptyset$}.
\end{cases}
\]
By the energy inequality, we get (see Lemma~\ref{lem:pre})
\begin{equation}		\label{eq11.03ap}
\tri{\tilde{\vec v}_{\epsilon}}_{\Omega\times (-\infty,\infty)}\le  C \norm{\vec \psi_{\epsilon,y,k}}_{L^2(\Omega)} \le C \epsilon^{-n/2}.
\end{equation}
Let $\vec f$ be a smooth function supported in $Q_{+}(X_0,R) \subset \Omega\times (-\infty,\infty)$.
Fix $b>t_0+R^2$ and let $\tilde{\vec u}$ be the weak solution of the backward problem
\begin{equation}			\label{eq5.06jj}
\left\{\begin{array}{ll}
-\vec u_t - L \vec u =\vec f \quad & \text{in }\; \Omega \times (a,b)\\
\vec u=0 & \text{on }\;  D\times (a,b)\\
\traction(\vec u) =0  & \text{on }\; N \times (a,b)\\
\vec u(\cdot, b) = 0 &  \text{on }\; \Omega\\
\vec u(\cdot,t) \in \vec V& \text{for a.e.}\;  t\in (a,b).
\end{array}
\right. 
\end{equation}
The unique solvability of the above problem is similar to Lemma~\ref{lem:pre} and by setting $\tilde{\vec u}(x,t)=0$ for $t>b$ and letting $a\to -\infty$, we may again assume that $\tilde{\vec u}$ is defined on $\Omega\times (-\infty,\infty)$.
Then, similar to \eqref{enieq}, we have
\begin{equation}		\label{eq11.06b}
\tri{\tilde{\vec u}}_{\Omega\times (-\infty,\infty)}\le C\norm{\vec f}_{\sL_{2(n+2)/(n+4)}(Q_{+}(X_0,R))}.
\end{equation}
By using H\"{o}lder's inequality, we derive from \eqref{eq11.06b} that
\[
\norm{\tilde{\vec u}}_{\sL_2(Q_{+}(X_0,R))}\le C R^{3+n/2} \norm{\vec f}_{\sL_{\infty}(Q_{+}(X_0,R))}.
\]
Then by Lemma~\ref{lem3a} below and the above estimate, we obtain
\begin{equation}			\label{eq11.07b}
\abs{\tilde{\vec u}}_{0; Q_{+}(X_0,R/2)} \le C R^{2}\norm{\vec f}_{\sL_\infty(Q_{+}(X_0,R))}.
\end{equation}

\begin{lemma}			\label{lem3a}
H2 implies that $\tilde{\vec u}$ is continuous in $Q_{+}(X_0,R/2)$ and satisfies the estimate
\begin{equation}				\label{eq16.01f}
\abs{\tilde{\vec u}}_{0; \frac{1}{2}Q} \le C\left(R^{-(n+2)/2} \norm{\tilde{\vec u}}_{\sL_2(Q)}+ R^{2}\norm{\vec f}_{\sL_\infty(Q)}\right),
\end{equation}
where we denote $\alpha Q=Q_{+}(X_0,\alpha R)$.
In fact, the same conclusion is true if $\tilde{\vec u}$ is a weak solution in $\sV_2(Q)^n$ of $-\vec u_t -L \vec u=\vec f$  (or $\vec u_t - L \vec u = \vec f$) with  $\vec f \in \sL_\infty(Q)^n$.
\end{lemma}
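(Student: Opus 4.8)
Here is a plan for proving Lemma~\ref{lem3a}.

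The plan is to run a Campanato‑type iteration on parabolic cylinders, absorbing the forcing $\vec f$ one scale at a time against the interior H\"older decay of homogeneous solutions supplied by Lemma~\ref{lem:ihp}. Consider first the backward equation $-\tilde{\vec u}_t-L\tilde{\vec u}=\vec f$ on $Q=Q_{+}(X_0,R)=B(x_0,R)\times(t_0,t_0+R^2)$ (the forward case $\vec u_t-L\vec u=\vec f$ being symmetric, with $Q_{-}$ in place of $Q_{+}$); for this equation the natural local cylinders are the future cylinders $Q_{+}$, and one uses the time‑reflected forms of Lemmas~\ref{lem:pre} and \ref{lem:ihp}, all valid since the coefficients are independent of $t$. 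The key preliminary point is that, with $r:=R/4$, one has $Q_{+}(X_1,r)\subset Q$ for \emph{every} $X_1=(x_1,t_1)\in\tfrac{1}{2}Q=Q_{+}(X_0,R/2)$: $\abs{x_1-x_0}<R/2$ gives $B(x_1,r)\subset B(x_0,R)$, and $t_1<t_0+R^2/4$ gives $t_1+r^2<t_0+R^2$. Thus a single radius $r\sim R$ serves uniformly on $\tfrac{1}{2}Q$, which is exactly what lets the estimate be phrased with $R$ rather than a point‑dependent scale.

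Fix $X_1\in\tfrac{1}{2}Q$. For $0<\rho\le r$ I split, on $Q_{+}(X_1,\rho)=B(x_1,\rho)\times(t_1,t_1+\rho^2)$, $\tilde{\vec u}=\vec v_\rho+\vec h_\rho$, where $\vec v_\rho\in\sV^{1,0}_2(Q_{+}(X_1,\rho))^n$ is the solution — furnished by (the time‑reflected form of) Lemma~\ref{lem:pre} on the ball $B(x_1,\rho)$ with full Dirichlet data and vanishing data at the top time — of $-(\vec v_\rho)_t-L\vec v_\rho=\vec f$, and $\vec h_\rho:=\tilde{\vec u}-\vec v_\rho$ is then a weak solution of the homogeneous system $-(\vec h_\rho)_t-L\vec h_\rho=0$ in $Q_{+}(X_1,\rho)$. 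For $\vec v_\rho$, the energy inequality \eqref{enieq} — which on a ball, for $W^{1,2}_0$ data, holds with a constant depending only on $n,\kappa_1,\kappa_2$ — together with $\norm{\vec v_\rho}_{\sL_2(Q_{+}(X_1,\rho))}\le\rho\,\tri{\vec v_\rho}_{Q_{+}(X_1,\rho)}$ and $\norm{\vec f}_{\sL_{2(n+2)/(n+4)}(Q_{+}(X_1,\rho))}\le C\rho^{(n+4)/2}\norm{\vec f}_{\sL_\infty(Q)}$ gives
\[
\Bigl(\fint_{Q_{+}(X_1,\rho)}\abs{\vec v_\rho}^2\Bigr)^{1/2}\le C\rho^2\norm{\vec f}_{\sL_\infty(Q)}.
\]
For $\vec h_\rho$, applying Lemma~\ref{lem:ihp} to $\vec h_\rho-\vec c$ (any $\vec c\in\bR^n$; it solves the same homogeneous system) yields, for $0<\theta\le\tfrac{1}{2}$,
\[
\osc_{Q_{+}(X_1,\theta\rho)}\vec h_\rho\le C\,\theta^{\mu_1}\Bigl(\fint_{Q_{+}(X_1,\rho)}\abs{\vec h_\rho-\vec c}^2\Bigr)^{1/2}.
\]

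Set $\Psi(\rho):=\inf_{\vec c\in\bR^n}\bigl(\fint_{Q_{+}(X_1,\rho)}\abs{\tilde{\vec u}-\vec c}^2\bigr)^{1/2}$. Taking $\vec c$ above to be a near‑minimizer for $\Psi(\rho)$, writing $\vec h_\rho-\vec c=(\tilde{\vec u}-\vec c)-\vec v_\rho$, bounding $\Psi(\theta\rho)\le\osc_{Q_{+}(X_1,\theta\rho)}\vec h_\rho+\bigl(\fint_{Q_{+}(X_1,\theta\rho)}\abs{\vec v_\rho}^2\bigr)^{1/2}$, and using $\fint_{Q_{+}(X_1,\theta\rho)}\abs{\vec v_\rho}^2\le\theta^{-(n+2)}\fint_{Q_{+}(X_1,\rho)}\abs{\vec v_\rho}^2$, one obtains the recursion
\[
\Psi(\theta\rho)\le C_0\,\theta^{\mu_1}\Psi(\rho)+C_1\,\theta^{-(n+2)/2}\rho^2\norm{\vec f}_{\sL_\infty(Q)}\quad(0<\rho\le r).
\]
Fixing $\theta$ so small that $C_0\theta^{\mu_1}\le\theta^{\mu_1/2}$ and iterating over $\rho=r,\theta r,\theta^2 r,\dots$ (the resulting geometric series converge since $0<\mu_1/2<2$), one gets
\[
\Psi(\rho)\le C\Bigl(\tfrac{\rho}{r}\Bigr)^{\mu_1/2}\bigl(\Psi(r)+r^2\norm{\vec f}_{\sL_\infty(Q)}\bigr)\quad(0<\rho\le r),
\]
with $C$ uniform over $X_1\in\tfrac{1}{2}Q$. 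By the parabolic Campanato/Morrey criterion this shows $\tilde{\vec u}$ is H\"older continuous — hence continuous — on $\tfrac{1}{2}Q$; and telescoping the averages $(\tilde{\vec u})_{Q_{+}(X_1,\theta^k r)}$, whose consecutive differences are controlled by $\Psi(\theta^k r)$, gives
\begin{multline*}
\abs{\tilde{\vec u}(X_1)}\le\Bigl(\fint_{Q_{+}(X_1,r)}\abs{\tilde{\vec u}}^2\Bigr)^{1/2}+C\bigl(\Psi(r)+r^2\norm{\vec f}_{\sL_\infty(Q)}\bigr)\\
\le C\,r^{-(n+2)/2}\norm{\tilde{\vec u}}_{\sL_2(Q)}+C\,r^2\norm{\vec f}_{\sL_\infty(Q)},
\end{multline*}
which is \eqref{eq16.01f}, since $r=R/4$ and $Q_{+}(X_1,r)\subset Q$.

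The main obstacle is conceptual rather than computational. For general elliptic \emph{systems} there is no maximum principle nor De Giorgi--Nash--Moser estimate, so the forcing term cannot be controlled by a barrier or by a Moser iteration, and the inhomogeneous piece $\vec v_\rho$ — even with $\sL_\infty$ right‑hand side and zero boundary data — is \emph{not} known a priori to be bounded; it must never be estimated in $\sL_\infty$, only in $\sL_2$ on a single scale. All the decay that drives the iteration comes from the H\"older continuity of the homogeneous part $\vec h_\rho$, which is the only place hypothesis H2 (via Lemma~\ref{lem:ihp}) enters. The one geometric point to watch is that the comparison cylinders may be taken of a single size comparable to $R$ throughout $\tfrac{1}{2}Q$; as noted, this works because for this equation one compares on future cylinders, and these stay inside $Q=Q_{+}(X_0,R)$.
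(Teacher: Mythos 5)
Your Campanato‑type perturbation argument for the \emph{second} assertion of the lemma --- that a weak solution in $\sV_2(Q)^n$ of $-\vec u_t - L\vec u=\vec f$ (or $\vec u_t - L\vec u=\vec f$) with $\vec f\in\sL_\infty(Q)^n$ is bounded on $\tfrac12 Q$ with the stated estimate --- is sound: the decomposition $\tilde{\vec u}=\vec v_\rho+\vec h_\rho$ on future cylinders, the $\rho$‑independent energy estimate for the Dirichlet problem on a ball, the oscillation decay of the homogeneous part via (the time‑reflected form of) Lemma~\ref{lem:ihp}, and the iteration of $\Psi$ all check out. This is, in effect, the content the paper outsources to \cite[Section~3.2]{CDK}, and writing it out in full is legitimate. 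The uniform choice $r=R/4$ and the inclusion $Q_{+}(X_1,r)\subset Q$ for $X_1\in\tfrac12 Q$ are correctly verified, and the needed interior condition $\rho\le d(x_1)$ for Lemma~\ref{lem:ihp} is automatic from $B(x_0,R)\subset\Omega$.

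The gap is that you never establish the \emph{first} assertion, which is what the paper actually proves in the appendix. There, $\tilde{\vec u}$ is not an arbitrary local weak solution of $-\vec u_t - L\vec u=\vec f$; it is the unique solution of the global backward problem \eqref{eq5.06jj}, in which $\vec f$ is interpreted only as an element of $L^1((a,b);\vec V')$, and the test functions $\vec\phi$ in the weak formulation are constrained (when $D=\emptyset$) to satisfy $\ip{\vec\phi(\cdot,t),\vec v}=0$ for every $\vec v\in\cR$ and every $t$. Hence one cannot test against arbitrary $\vec\phi\in\sC^\infty_c(Q)^n$, and $\tilde{\vec u}$ is \emph{not} a priori a local weak solution of $-\vec u_t - L\vec u=\vec f$ in the sense required by your iteration. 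The paper closes this exactly by replacing $\vec f$ with $\tilde{\vec f}(x,t):=\vec f(x,t)-\pi_\cR\bigl[\vec f(\cdot,t)\bigr](x)$, observing that $\tilde{\vec f}=\vec f$ as elements of $L^1((a,b);\vec V')$, that $\norm{\tilde{\vec f}}_{\sL_\infty(Q)}\le C\norm{\vec f}_{\sL_\infty(Q)}$ by finite‑dimensionality and orthogonality of $\pi_\cR$, and then, by uniqueness, that $\tilde{\vec u}$ coincides with the solution of the genuine problem \eqref{PMixed} driven by $\tilde{\vec f}$ --- so that $\tilde{\vec u}$ is a bona fide weak solution in $\sV_2(Q)^n$ of $-\vec u_t - L\vec u=\tilde{\vec f}$, to which the local estimate applies. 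Without this reduction (trivial when $D\neq\emptyset$, but essential when $D=\emptyset$), the first sentence of Lemma~\ref{lem3a} is not proved. You should prepend this reduction to your argument.
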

\begin{proof}
See Appendix \ref{proof:lem3a}. 
\end{proof}

Note that similar to \eqref{eq11.05b}, we have the identity
\begin{equation}		\label{eq11.05bt}
\int_\Omega \Phi_{y,\epsilon}(x) \tilde u^k(x,0)\,dx=\int_{-\infty}^\infty \int_\Omega \tilde K^\epsilon_{ik}(\cdot,y,\cdot) f^i\,dX.
\end{equation}
If $B(y,\epsilon)\times \set{0}\subset Q_{+}(X_0,R/2)$, then \eqref{eq11.05bt} together with \eqref{eq11.07b} yields
\[
\Abs{\iint_{Q_{+}(X_0,R)}\!\!\! \tilde K^\epsilon_{ik}(\cdot,y,\cdot) f^i\,dX} \le
 \abs{\tilde{\vec u}}_{0; Q_{+}(X_0,R/2)} \le CR^{2}\norm{\vec f}_{\sL_{\infty}(Q_{+}(X_0,R))}.
\]
Therefore, by duality, it follows that we have
\begin{equation}					\label{eq11.09a}
\norm{\tilde{\vec K}{}^\epsilon(\cdot,y,\cdot)}_{\sL_1(Q_{+}(X_0,R))}\le CR^{2}
\end{equation}
provided $0<R<d_y$ and $B(y,\epsilon)\times \set{0}\subset Q_{+}(X_0, R/2)$.
For $X\in \cQ $ such that $0<d:= \abs{X-\hat{Y}}_\sP< d_y/6$, if we set $r=d/3$, $X_0=(y, -2d^2)$, and $R=6d$, then it is easy to see that for $\epsilon<d/3$, we have
\[
B(y,\epsilon)\times\set{0}\subset Q_{+}(X_0,R/2), \quad Q_{-}(X,r)\subset Q_{+}(X_0,R),
\]
and also that $\tilde{\vec v}_\epsilon=\tilde{\vec v}_{\epsilon,y,k}$ is a weak solution in $\sV_2(Q_{-}(X,r))$ of $\vec u_t-L \vec u=0$.

\begin{lemma}		\label{lem3c}
H2 implies that
for any $p>0$, we have
\[
\abs{\vec u}_{0; Q_{-}(X,r/2)} \le C_p r^{-(n+2)/p} \norm{\vec u}_{\sL_p(Q_{-}(X,r))}.
\]
\end{lemma}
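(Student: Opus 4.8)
The plan is to prove the estimate first for the exponent $p=2$, using only Lemma~\ref{lem:ihp}, and then to bootstrap to arbitrary $p>0$ by a standard interpolation-plus-iteration argument. Throughout, $\vec u$ denotes a weak solution in $\sV_2(Q_{-}(X,r))^n$ of $\vec u_t-L\vec u=0$ on an interior cylinder $Q_{-}(X,r)$ (so that $B(x,r)\subset\Omega$), as in the situation where the lemma is applied.

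For the case $p=2$, I would fix $\tfrac12\le\sigma<\tau\le1$ and an arbitrary point $Y=(y,s)\in Q_{-}(X,\sigma r)$, and check, by a short computation with the parabolic distance, that the sub-cylinder $Q_{-}(Y,(\tau-\sigma)r)$ lies inside $Q_{-}(X,\tau r)$; since we are in the interior, the hypotheses of Lemma~\ref{lem:ihp} hold there, so $\vec u$ extends continuously to the closure of $Q_{-}(Y,(\tau-\sigma)r/2)$ with
\[
[\vec u]_{\mu_1,\mu_1/2;\,Q_{-}(Y,(\tau-\sigma)r/2)}\le C\bigl((\tau-\sigma)r\bigr)^{-\mu_1-(n+2)/2}\norm{\vec u}_{\sL_2(Q_{-}(X,\tau r))}.
\]
Then I would bound $\abs{\vec u(Y)}$ by the average of $\abs{\vec u(Y)-\vec u(Z)}$ over $Z\in Q_{-}(Y,(\tau-\sigma)r/2)$ plus the average of $\abs{\vec u(Z)}$ there; using the H\"older seminorm bound above, the volume estimate $\abs{Q_{-}(Y,(\tau-\sigma)r/2)}\sim\bigl((\tau-\sigma)r\bigr)^{n+2}$, and Cauchy--Schwarz, this would give
\[
\abs{\vec u}_{0;\,Q_{-}(X,\sigma r)}\le\frac{C}{\bigl((\tau-\sigma)r\bigr)^{(n+2)/2}}\,\norm{\vec u}_{\sL_2(Q_{-}(X,\tau r))},\qquad\tfrac12\le\sigma<\tau\le1,
\]
and in particular, taking $\sigma=\tfrac12$ and $\tau=1$, the lemma for $p=2$.

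To pass to general $p$, for $p\ge2$ I would simply combine the previous estimate with H\"older's inequality on $Q_{-}(X,r)$. For $0<p<2$, I would set $\Phi(\theta):=\abs{\vec u}_{0;\,Q_{-}(X,\theta r)}$ (finite for $\theta<1$ by interior continuity), interpolate $\norm{\vec u}_{\sL_2(Q_{-}(X,\tau r))}^2\le\Phi(\tau)^{2-p}\norm{\vec u}_{\sL_p(Q_{-}(X,r))}^p$ into the last display, and apply Young's inequality (with exponents $\tfrac{2}{2-p}$ and $\tfrac2p$, valid since $0<p<2$) to arrive at
\[
\Phi(\sigma)\le\tfrac12\Phi(\tau)+\frac{C_p}{\bigl((\tau-\sigma)r\bigr)^{(n+2)/p}}\norm{\vec u}_{\sL_p(Q_{-}(X,r))},\qquad\tfrac12\le\sigma<\tau\le1.
\]
A standard iteration (hole-filling) lemma, applied say on $[\tfrac12,\tfrac34]$ where $\Phi$ is a priori finite, then gives $\Phi(\tfrac12)\le C_p r^{-(n+2)/p}\norm{\vec u}_{\sL_p(Q_{-}(X,r))}$, which is the desired inequality.

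I expect the argument to be essentially routine; the only point requiring genuine care is the geometric bookkeeping in the $p=2$ step — arranging the sub-cylinders on which Lemma~\ref{lem:ihp} is invoked so that they lie inside $Q_{-}(X,\tau r)$ while still reaching up to the evaluation point $Y$, so that $\abs{\vec u(Y)}$ is genuinely controlled through the continuous extension of $\vec u$ to the closure of a sub-cylinder of size comparable to $(\tau-\sigma)r$.
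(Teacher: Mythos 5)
Your proposal is correct and follows essentially the same route as the paper: the paper's one-line proof invokes the $p=2$ sup bound from Lemma~\ref{lem3a} (with $\vec f=0$) and then refers to the standard argument of Giaquinta \cite[pp.~80--82]{Gi93} for the passage to arbitrary $p>0$, which is exactly the interpolation--Young--iteration scheme you spell out. The only cosmetic difference is that you derive the $p=2$ base estimate directly from Lemma~\ref{lem:ihp} rather than quoting Lemma~\ref{lem3a}, but that is the same interior H\"older-to-sup step in slightly different clothing, and your geometric bookkeeping with the nested $Q_{-}$ cylinders, the restriction of the iteration to a compact subinterval such as $[\tfrac12,\tfrac34]$ where $\Phi$ is a priori finite, and the Young exponents $\tfrac{2}{2-p}$, $\tfrac{2}{p}$ for $0<p<2$ are all correct.
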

\begin{proof}
It follows from the estimate \eqref{eq16.01f} in Lemma~\ref{lem3a} together with a standard argument described in \cite[pp. 80--82]{Gi93}.
\end{proof}

Note that by Lemma~\ref{lem3c} and \eqref{eq11.09a}, we obtain
\[
\abs{\tilde{\vec v}_\epsilon(X)}\le Cr^{-n-2} \norm{\tilde{\vec v}_\epsilon}_{\sL_1(Q_{-}(X,r))}
\leq Cr^{-n-2}\norm{\tilde{\vec v}_\epsilon}_{\sL_1(Q_{+}(X_0,R))} \le C d^{-n},
\]
That is, for $X=(x,t)\in \cQ$ satisfying $0<\abs{X-\hat{Y}}_\sP<d_y/6$, we have 
\begin{equation}					\label{eq11.16a}
\abs{\tilde{\vec K}{}^\epsilon(x,y,t)} \le  C \abs{X-\hat{Y}}_{\sP}^{-n},\quad \forall \epsilon \le \tfrac{1}{3}\abs{X-\hat{Y}}_{\sP}.
\end{equation}
Next, we claim that for $0<R \le d_y$, we have
\begin{equation}			\label{eq11.19a}
\tri{\tilde{\vec K}{}^\epsilon(\cdot,y,\cdot)}_{\cQ \setminus  Q_{+}(\hat{Y},R)} \le C R^{-n/2}, \quad \forall \epsilon>0.
\end{equation}
To prove \eqref{eq11.19a}, we only need to consider the case when $R>6 \epsilon$.
Indeed, if $R\le 6 \epsilon$, then \eqref{eq11.03ap} yields
\[
\tri{\tilde{\vec K}{}^\epsilon(\cdot,y,\cdot)}_{\cQ \setminus  Q_{+}(\hat{Y},R)}
\le \tri{\tilde{\vec K}{}^\epsilon(\cdot,y,\cdot)}_{\Omega\times (-\infty,\infty)} \le C\epsilon^{-n/2} \le CR^{-n/2}.
\]
Fix a cut-off function $\zeta\in \sC^\infty_c(Q(\hat{Y},R))$ such that
\begin{equation}			\label{wse.eq3aa}
\zeta\equiv 1\;\text{ on }\; Q_{+}(\hat{Y}, R/2),\quad
0\le \zeta \le 1,\quad \abs{D_x \zeta}\le 4 R^{-1}, \quad \abs{\zeta_t}\le 16 R^{-2}.
\end{equation}
By using the second Korn inequality \eqref{eq2.08q} and \eqref{eq11.16a}, we derive from \eqref{eq10.03ap} that
\begin{align}
\nonumber
\sup_{t \ge 0}\int_{\Omega} \abs{(1-\zeta) \tilde{\vec v}_\epsilon&(x,t)}^2\,dx+\iint_{\cQ} \abs{D_x ((1-\zeta) \tilde{\vec v}_\epsilon)}^2 \,dx dt\\
\nonumber
&\le C \iint_{\cQ} \left(\abs{D_x \zeta}^2+\abs{(1-\zeta)\zeta_t}\right) \abs{\tilde{\vec v}_\epsilon}^2 \,dx dt\\
&\le C R^{-2} \iint_{\set{R/2<\abs{X-\hat{Y}}_{\sP}<R}} \abs{X-\hat{Y}}_{\sP}^{-2n}\,dX \le CR^{-n},
			\label{eq11.20a}
\end{align}
which implies the desired estimate \eqref{eq11.19a}.
In fact, we obtain from \eqref{eq11.20a} that 
\begin{equation}		\label{eq11.21a}
\tri{(1-\zeta) \tilde{\vec K}{}^\epsilon(\cdot,y,\cdot)}_{\cQ} \le C R^{-n/2},
\quad\forall\epsilon>0.
\end{equation}
We claim that for $0<R \le d_y$, we have
\begin{equation}			\label{eq5.11sj}
\norm{\tilde{\vec K}{}^\epsilon(\cdot,y,\cdot)}_{\sL_{2(n+2)/n}(\cQ\setminus \bar Q_{+}(\hat{Y},R))}\le C R^{-n/2}.
\end{equation}
Indeed, set $\cQ_{(1)}:= \Omega\times(R^2,\infty)$ and $\cQ_{(2)}:= (\Omega\setminus \bar B(y,R))\times(0,R^2)$ and note that
by \eqref{eq5.13d} and \eqref{eq11.19a} we have
\[
\norm{\tilde{\vec K}{}^\epsilon(\cdot,y,\cdot)}_{\sL_{2(n+2)/n}(\cQ_{(1)})} \le C \gamma \tri{\tilde{\vec K}{}^\epsilon(\cdot,y,\cdot)}_{\cQ_{(1)}}
\le C\gamma R^{-n/2}
\]
and similarly, by \eqref{paraemb} and \eqref{eq11.21a}, we have
\[
\norm{\tilde{\vec K}{}^\epsilon(\cdot,y,\cdot)}_{\sL_{2(n+2)/n}(\cQ_{(2)})}
\le C (2\gamma+1)^{\frac{1}{n+2}} \gamma R^{-n/2}.
\]
By combining the above two inequalities, we get \eqref{eq5.11sj}.
\begin{lemma}		\label{wse.lem1}
For any $y\in \Omega$ and $\epsilon>0$, we have
\begin{align}
\label{wse.eq2e}
\abs{\set{X=(x,t)\in\cQ \colon \abs{\tilde{\vec K}{}^\epsilon(x,y,t)}&>\lambda}} \le C\lambda^{-\frac{n+2}{n}}, \quad \forall \lambda>d_y^{-n},\\
\label{eq2.15f}
\abs{\set{X=(x,t) \in \cQ \colon \abs{D_x \tilde{\vec K}{}^\epsilon(x,y,t)}&>\lambda}} \le C \lambda^{-\frac{n+2}{n+1}},\quad \forall \lambda> d_y^{-(n+1)}.
\end{align}
Also, for any $y\in \Omega$, $0<R\le d_y$, and $\epsilon>0$, we have
\begin{align}
\label{wse.eq2h}
\norm{\tilde{\vec K}{}^\epsilon(\cdot,y,\cdot)}_{\sL_p(Q_{+}(\hat{Y},R))}&\le C_{p} R^{-n+(n+2)/p},  \quad \forall p\in [1,\tfrac{n+2}{n}),\\
\label{wse.eq2i}
\norm{D_x \tilde{\vec K}{}^\epsilon(\cdot,y,\cdot)}_{\sL_p(Q_{+}(\hat{Y},R))} &\le C_{p} R^{-n-1+(n+2)/p},  \quad \forall p\in[1,\tfrac{n+2}{n+1}).
\end{align}
\end{lemma}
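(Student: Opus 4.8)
The plan is to extract all four estimates from the two $\epsilon$-uniform background bounds already in place: the energy-type inequality $\tri{\tilde{\vec K}{}^\epsilon(\cdot,y,\cdot)}_{\cQ\setminus Q_{+}(\hat{Y},R)}\le CR^{-n/2}$ from \eqref{eq11.19a} (which in particular controls $\norm{D_x\tilde{\vec K}{}^\epsilon(\cdot,y,\cdot)}_{\sL_2(\cQ\setminus Q_{+}(\hat{Y},R))}$), and the parabolic Sobolev bound $\norm{\tilde{\vec K}{}^\epsilon(\cdot,y,\cdot)}_{\sL_{2(n+2)/n}(\cQ\setminus\bar Q_{+}(\hat{Y},R))}\le CR^{-n/2}$ from \eqref{eq5.11sj}, both valid for all $0<R\le d_y$. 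Since $\tilde{\vec K}{}^\epsilon(\cdot,y,\cdot)$ vanishes for $t<0$ we may throughout regard it as defined on $\cQ$. No additional regularity (in particular not the pointwise bound \eqref{eq11.16a}) enters, and every constant produced will be independent of $\epsilon$ because its only inputs are.

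For the weak-type inequalities \eqref{wse.eq2e} and \eqref{eq2.15f} I would split at a single critical cylinder. Given $\lambda>d_y^{-n}$, put $r=\lambda^{-1/n}\le d_y$; then $\set{X\in\cQ\colon\abs{\tilde{\vec K}{}^\epsilon(x,y,t)}>\lambda}$ is the union of its part inside $Q_{+}(\hat{Y},r)$, of measure at most $\abs{Q_{+}(\hat{Y},r)}\le Cr^{n+2}=C\lambda^{-(n+2)/n}$, and its part outside, which by Chebyshev's inequality and \eqref{eq5.11sj} has measure at most $\lambda^{-2(n+2)/n}\norm{\tilde{\vec K}{}^\epsilon(\cdot,y,\cdot)}_{\sL_{2(n+2)/n}(\cQ\setminus\bar Q_{+}(\hat{Y},r))}^{2(n+2)/n}\le C\lambda^{-2(n+2)/n}r^{-(n+2)}=C\lambda^{-(n+2)/n}$; summing gives \eqref{wse.eq2e}. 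The gradient estimate \eqref{eq2.15f} is the same computation with $r=\lambda^{-1/(n+1)}\le d_y$ (using $\lambda>d_y^{-(n+1)}$) and with the $\sL_2$ consequence of \eqref{eq11.19a} in place of \eqref{eq5.11sj}: the inside contributes $Cr^{n+2}=C\lambda^{-(n+2)/(n+1)}$ and the outside $\lambda^{-2}(Cr^{-n/2})^2=C\lambda^{-2}r^{-n}=C\lambda^{-(n+2)/(n+1)}$.

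For the local $\sL_p$ bounds \eqref{wse.eq2h} and \eqref{wse.eq2i} I would decompose $Q_{+}(\hat{Y},R)$ into the dyadic shells $\cQ_j:=Q_{+}(\hat{Y},2^{-j}R)\setminus Q_{+}(\hat{Y},2^{-j-1}R)$ for $j\ge0$. Each $\cQ_j$ has $\abs{\cQ_j}\le C(2^{-j}R)^{n+2}$ and lies in $\cQ\setminus Q_{+}(\hat{Y},2^{-j-1}R)$ with $2^{-j-1}R\le d_y$, so on $\cQ_j$ one may interpolate, via Hölder's inequality, the trivial measure bound against \eqref{eq5.11sj} (resp. against the $\sL_2$ bound for the gradient): for $p<\tfrac{2(n+2)}{n}$ this gives $\norm{\tilde{\vec K}{}^\epsilon(\cdot,y,\cdot)}_{\sL_p(\cQ_j)}\le C(2^{-j}R)^{-n/2}(2^{-j}R)^{(n+2)/p-n/2}=C(2^{-j}R)^{(n+2)/p-n}$, and for $p<2$, $\norm{D_x\tilde{\vec K}{}^\epsilon(\cdot,y,\cdot)}_{\sL_p(\cQ_j)}\le C(2^{-j}R)^{(n+2)/p-(n+1)}$. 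Summing $p$-th powers over $j\ge0$ produces the geometric series $R^{(n+2)-np}\sum_j 2^{-j((n+2)-np)}$ and $R^{(n+2)-p(n+1)}\sum_j 2^{-j((n+2)-p(n+1))}$, which converge exactly for $p<\tfrac{n+2}{n}$ and $p<\tfrac{n+2}{n+1}$ respectively; taking $p$-th roots yields \eqref{wse.eq2h} and \eqref{wse.eq2i}.

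All of this is elementary; the only thing requiring care is the exponent bookkeeping, which must reproduce exactly the admissible ranges of $p$ and the thresholds $d_y^{-n}$ and $d_y^{-(n+1)}$ on $\lambda$ — these thresholds are precisely what make the critical radius $r$ lie in $(0,d_y]$ so that the background estimates apply — together with the observation that each shell $\cQ_j$ must be paired with the inner radius $2^{-j-1}R$, not $2^{-j}R$, when invoking \eqref{eq11.19a} and \eqref{eq5.11sj}.
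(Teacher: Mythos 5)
Your proof is correct, and for the weak-type estimates \eqref{wse.eq2e}, \eqref{eq2.15f} you use the same argument the paper outsources to \cite[Lemmas~3.3 and 3.4]{CDK}: pick the critical radius $r=\lambda^{-1/n}$ (resp.\ $\lambda^{-1/(n+1)}$), bound the level set inside $Q_+(\hat Y,r)$ by the measure of the cylinder, and bound it outside by Chebyshev applied to \eqref{eq5.11sj} (resp.\ the $\sL_2$ gradient consequence of \eqref{eq11.19a}); the thresholds $\lambda>d_y^{-n}$, $\lambda>d_y^{-(n+1)}$ are exactly what force $r\le d_y$. Where you depart from the paper is in the local $\sL_p$ bounds: the paper derives \eqref{wse.eq2h} and \eqref{wse.eq2i} from the weak-type estimates \eqref{wse.eq2e} and \eqref{eq2.15f} by the layer-cake formula $\norm{f}_{\sL_p}^p=p\int_0^\infty\lambda^{p-1}\abs{\{\abs f>\lambda\}}\,d\lambda$, split at $\lambda=R^{-n}$ (resp.\ $R^{-n-1}$), with the tail integral converging precisely for $p<\tfrac{n+2}{n}$ (resp.\ $p<\tfrac{n+2}{n+1}$). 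You instead bypass the weak-type estimates entirely and go back to the strong annular bounds \eqref{eq5.11sj}, \eqref{eq11.19a}, interpolating on dyadic shells $\cQ_j=Q_+(\hat Y,2^{-j}R)\setminus Q_+(\hat Y,2^{-j-1}R)$ via H\"older and summing the resulting geometric series. Both routes are standard and yield the same ranges of $p$; yours is slightly more self-contained since the $\sL_p$ bounds do not logically depend on the weak-type part, while the paper's route economizes by reusing the weak-type estimates it proves anyway. Your exponent bookkeeping (in particular pairing each shell with the \emph{inner} radius $2^{-j-1}R$ when invoking the background bounds) is correct throughout.
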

\begin{proof}
We derive \eqref{wse.eq2h} and \eqref{wse.eq2i}, respectively, from
\eqref{wse.eq2e} and  \eqref{eq2.15f}, which in turn follow  from
\eqref{eq5.11sj} and \eqref{eq11.19a}, respectively; see \cite[Lemmas~3.3 and 3.4]{CDK}.
\end{proof}

\begin{lemma}	\label{lem5.18ah}
Suppose $\set{\vec u_k}_{k=1}^\infty$ is a sequence in
$\sV_2(\cQ)^n$ such that $\sup_k \tri{\vec u_k}_{\cQ}\le A<\infty$,
then there exists $\vec u \in \sV_2(\cQ)^n$ satisfying $\tri{\vec
  u}_{\cQ}\le A$ and a subsequence $\vec u_{k_j}$ that converges to
$\vec u$ weakly in  $\sW^{1,0}_2(\Omega \times (0,T))^n$ for any
$T>0$.
Moreover, if each $\vec u_k(\cdot,t) \in \vec V$ for a.e. $t\in
(0,\infty)$, then we also have   $\vec u(\cdot, t) \in \vec V $ for
a.e. $t\in (0, \infty)$.
\end{lemma}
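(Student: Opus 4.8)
The plan is to build $\vec u$ by a diagonal weak-compactness argument over the finite slabs $Q_T=\Omega\times(0,T)$ and then to recover $\tri{\vec u}_\cQ\le A$ by lower semicontinuity. First I would note that $\tri{\vec u_k}_\cQ\le A$ forces $\norm{D_x\vec u_k}_{\sL_2(Q_m)}\le A$ and $\norm{\vec u_k}_{\sL_2(Q_m)}^2=\int_0^m\norm{\vec u_k(\cdot,t)}_{L^2(\Omega)}^2\,dt\le mA^2$ for every positive integer $m$, so $\set{\vec u_k}$ is bounded in the Hilbert space $\sW^{1,0}_2(Q_m)^n$. Extracting weakly convergent subsequences and diagonalizing over $m$ produces a single subsequence, still written $\vec u_k$, converging weakly in $\sW^{1,0}_2(Q_m)^n$ for every $m$; since the restriction maps between slabs are weakly continuous the limits are consistent and glue to a function $\vec u$ with $\vec u\in\sW^{1,0}_2(Q_T)^n$ and $\vec u_k\rightharpoonup\vec u$ in $\sW^{1,0}_2(\Omega\times(0,T))^n$ for every $T>0$.

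To get the norm bound I would treat the two parts of $\tri{\cdot}_\cQ^2$ separately, namely the gradient energy $\norm{D_x\cdot}_{\sL_2(\cQ)}^2$ and the time-supremum $\esssup_t\norm{\cdot(\cdot,t)}_{L^2(\Omega)}^2$. Passing to a further subsequence so that both of these quantities converge along $\vec u_k$, to limits $C$ and $B$ with $B+C\le A^2$, I would use weak lower semicontinuity of the $L^2$ norm on each $Q_T$ and then send $T\to\infty$ to obtain $\norm{D_x\vec u}_{\sL_2(\cQ)}^2\le C$. For the time-supremum I would observe that $\set{\vec u_k}$ is bounded in $L^\infty(0,T;L^2(\Omega))^n=\bigl(L^1(0,T;L^2(\Omega))^n\bigr)^*$ and already converges weakly to $\vec u$ in $\sL_2(Q_T)^n\subset L^1(0,T;L^2(\Omega))^n$, so its weak-$*$ limit there is again $\vec u$; weak-$*$ lower semicontinuity of the norm then gives $\esssup_{t\in(0,T)}\norm{\vec u(\cdot,t)}_{L^2(\Omega)}^2\le B$, and letting $T\to\infty$ and adding, $\tri{\vec u}_\cQ^2\le B+C\le A^2$. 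In particular $\vec u\in\sV_2(\cQ)^n$.

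For the final assertion I would use that $\vec V$ is a closed linear subspace of $W^{1,2}(\Omega)^n$ in both cases (when $D=\emptyset$ because $\vec u\mapsto\ip{\vec u,\vec v}$ is a bounded linear functional on $W^{1,2}(\Omega)^n$ for each $\vec v\in\cR$). Fixing $T$, I would introduce $\sK_T:=\set{\vec w\in\sW^{1,0}_2(Q_T)^n\colon \vec w(\cdot,t)\in\vec V\text{ for a.e. }t\in(0,T)}$, a linear subspace that is norm-closed — from a strongly convergent sequence in $\sK_T$ one extracts a subsequence converging in $W^{1,2}(\Omega)^n$ for a.e.\ $t$ and invokes closedness of $\vec V$ — hence weakly closed. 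Since $\vec u_k|_{Q_T}\in\sK_T$ and $\vec u_k\rightharpoonup\vec u$ in $\sW^{1,0}_2(Q_T)^n$, we conclude $\vec u(\cdot,t)\in\vec V$ for a.e.\ $t\in(0,T)$, and taking the union over $T=1,2,\dots$ of the exceptional null sets finishes the proof.

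The step I expect to be the main obstacle is obtaining the sharp constant in $\tri{\vec u}_\cQ\le A$: weak convergence in $\sW^{1,0}_2$ does not by itself allow passage to the limit in the time-supremum part of the norm, so the weak-$*$ argument in $L^\infty(0,T;L^2(\Omega))$ — run along a subsequence on which both the gradient energy and the time-supremum converge — is the crux; the remaining ingredients are standard functional analysis.
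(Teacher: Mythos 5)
Your proof is correct and self-contained. The paper delegates this lemma entirely to \cite{CDK}, Lemma A.1, so a line-by-line comparison is not possible, but the three ingredients you use are exactly the standard ones: diagonal weak compactness over the slabs $Q_m$, weak-$*$ sequential compactness and lower semicontinuity of the norm in $L^\infty(0,T;L^2(\Omega))^n = \bigl(L^1(0,T;L^2(\Omega))^n\bigr)^*$ to handle the $\esssup$ part of $\tri{\cdot}_\cQ$, and norm-closedness (hence weak closedness, since it is a subspace) of $\sK_T=\set{\vec w \in \sW^{1,0}_2(Q_T)^n : \vec w(\cdot,t)\in\vec V \text{ for a.e.\ } t}$. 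You are also right to flag the $\esssup$ term as the only nonroutine step — weak $\sW^{1,0}_2$-convergence gives nothing pointwise in $t$, and the $L^\infty$-weak-$*$ identification with the $\sL_2$-weak limit is precisely what rescues the sharp constant $A$ rather than some $CA$.
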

\begin{proof}
See \cite[Lemma~A.1]{CDK}.
\end{proof}
The above two lemmas contain all the ingredients for the construction of a function $\tilde{\vec K}(\cdot,y,\cdot)$ such that for a sequence $\epsilon_\mu$ tending to zero, we have
\begin{align}
\nonumber
\tilde{\vec K}{}^{\epsilon_\mu}(\cdot,y,\cdot) &\rightharpoonup \tilde{\vec K}(\cdot,y,\cdot) \;\text{ weakly in }\, \sW^{1,0}_q(Q_{+}(\hat{Y},d_y))^{n^2},\\
\label{eq5.27a}
(1-\zeta)\tilde{\vec K}{}^{\epsilon_\mu}(\cdot,y,\cdot) &\rightharpoonup (1-\zeta)\tilde{\vec K}(\cdot,y,\cdot) \; \text{ weakly in }\,\sW^{1,0}_2(\Omega\times (0,T))^{n^2},
\end{align}
where $1<q<\frac{n+2}{n+1}$, $\zeta$ is as in \eqref{wse.eq3aa} with $R=\bar d_Y/2$, and $T>0$ is arbitrary.
It is routine to check that $\tilde{\vec K}(\cdot,y,\cdot)$ satisfies the same estimates as in Lemma~\ref{wse.lem1} as well as \eqref{eq11.19a} and \eqref{eq5.11sj}; see \cite[Section~4.2]{CDK}.
Note that by Lemma~\ref{lem5.18ah} we have $\tilde{\vec K}(\cdot,t,y) \in \vec V$ for a.e. $t>0$.
We define $\vec{K}(x,y,t)$ by
\begin{equation}	\label{ntilde}
\vec K(x,y,t):=
\begin{cases} \tilde{\vec K}(x,y,t) & \text{if $D\neq\emptyset$,}
\\
\tilde{\vec K}(x,y,t)+1_{[0,\infty)}(t) \,\pi_\cR(\delta_y \vec I)(x) &\text{if $D = \emptyset$.}
\end{cases}
\end{equation}
where $\pi_\cR(\delta_y \vec I)$ is an $n\times n$ matrix valued function whose $k$-th column is
\begin{multline}			\label{eq5.22pj}
\pi_\cR(\delta_y \vec e_k)=\sum_{i=1}^{N}\, \omega_i^k(y) \vec \omega_i,\\
\text{where }\;\vec \omega_i=(\omega_i^1,\ldots, \omega_i^n)^T \in \cR\;\text{ and }\; \int_\Omega \vec \omega_i\cdot \vec \omega_j\,dx = \delta_{ij}. 
\end{multline}
Then, it is easy to see that $\vec K(x,y,t)$ satisfies the estimates
1) - 4) in Theorem \ref{thm1} because $\tilde{\vec K}(x,y,t)$ satisfies all of them as we noted above, and $\cR$ is a finite dimensional vector space so that all norms over $\cR$ are equivalent.
For example, to see the estimate 3) in the theorem holds, observe that
\begin{multline*}
\norm{D\vec \omega_i}_{\sL^p(Q_{+}(\hat{Y},r))} \le r \norm{\vec \omega_i}_{W^{1,2}(B(y,r))} \abs{Q_{+}(\hat{Y}, r)}^{\frac{1}{p}-\frac{1}{2}}\\
 \le C r^{-n/2+(n+2)/p} \le C (\diam \Omega)^{n/2+1} r^{-n-1+(n+2)/p}.
\end{multline*}
Since $\pi_R(\delta_y \vec e_k)\in \cR$, it is a weak solution of
$\vec u_t -L \vec u=0$.
Therefore, by repeating the  proof for \eqref{eq11.16a}, we find $\vec K(x,y,t)$ satisfies the estimate \eqref{eq3.07c}, while the estimate \eqref{eq3.08d} is obtained from \eqref{eq3.07c} and Lemma~\ref{lem:ihp}.
We have thus shown that $\vec K(x,y,t)$ satisfies all the estimates 1)
- 6) in  Theorem \ref{thm1}.

We now prove that $\vec K(x,y,t)$ satisfies all the properties stated in Section~\ref{sec2.4} so that it is indeed the heat kernel for \eqref{MP}.
First, note that the property a) is clear from 1), 3) in the theorem and 8) in Remark~\ref{rmk3.7af}.
To verify the property b), first note that by \eqref{eq5.09ck}  together with \eqref{eq5.27a} and \eqref{ntilde}, we have
\begin{align}
\nonumber
\vec K^{\epsilon_\mu}(\cdot,y,\cdot) &\rightharpoonup \vec K(\cdot,y,\cdot) \;\text{ weakly in }\, \sW^{1,0}_q(Q_{+}(\hat{Y},d_y))^{n^2},\\
\label{eq5.28bt}
(1-\zeta)\vec K^{\epsilon_\mu}(\cdot,y,\cdot) &\rightharpoonup (1-\zeta)\vec K(\cdot,y,\cdot) \; \text{ weakly in }\,\sW^{1,0}_2(\Omega\times (0,T))^{n^2}, 
\end{align}
for any $T>0$.
Next, suppose $\vec \phi=(\phi^1,\ldots,\phi^n)^T$ is supported in $\bar\Omega \times (0,T)$ and note that by \eqref{eq10.03a} we have
\begin{multline*}
\int_\Omega \Phi_{y, \epsilon}(x) \phi^k(x,0)\,dx= \int_0^T\!\!\!\int_{\Omega} - K^{\epsilon_\mu}_{ik}(x,y,t)\frac{\partial}{\partial t} \phi^i(x,t)\,dx\,dt\\
+\int_0^T\!\!\!\int_{\Omega} a^{\alpha\beta}_{ij} \frac{\partial}{\partial x_\beta}   K^{\epsilon_\mu}_{jk}(x,y,t) \frac{\partial}{\partial x_\alpha} \phi^i(x,t) \,dx\,dt.
\end{multline*}
By writing $\vec \phi= \eta \vec \phi + (1-\eta)\vec \phi$,  where $\eta \in \sC^\infty_c(Q(\hat{Y}, d_y))$ satisfying $\eta =1$ on $Q(\hat{Y}, d_y/2)$, and using \eqref{eq5.28bt}, and taking $\mu\to \infty$ in the above, we get the identity \eqref{eq2.04x}; see \cite[p. 1662]{CDK} for the details.
To verify the property c), let us denote $\hat{\vec f}(x,t)=\vec f(x,-t)$ and let $\hat{\vec u}$ be a unique weak solution in $\sV^{1,0}_2(\Omega\times (-T,0))^n$ of the backward problem
\[
\left\{
\begin{array}{ll}
-\vec u_t - L \vec u = \hat{\vec f} \quad & \text{in }\; \Omega \times (-T,0)\\
\vec u=0 & \text{on }\;  D\times (-T,0)\\
\traction(\vec u) =0  & \text{on }\; N \times (-T,0)\\
\vec u(\cdot, 0) = 0  &  \text{on }\; \Omega.
\end{array} \right. 
\]
By letting $T\to \infty$, we may assume that $\hat{\vec u}$ is defined on $\Omega\times (-\infty,0)$.
Then, similar to \eqref{eq11.05b}, for $t>0$, we have
\[
\int_\Omega \Phi_{x,\epsilon}(y) \hat u^k(y,-t)\,dy=\int_{-t}^0 \int_\Omega K^\epsilon_{ik}(y,x,s+t) \hat f^i(y,s)\,dy\,ds.
\]
We note H2 implies, similar to Lemma~\ref{lem3a}, that $\hat{\vec u}$ is continuous in $\Omega\times (-\infty,0)$.
By writing $\vec f=\zeta \vec f+ (1-\zeta) \vec f$ and use \eqref{eq5.28bt} to get
\[
\hat u^k(x,-t)=\int_{0}^{t} \!\!\! \int_\Omega K_{ik}(y,x,t-s) f^i(y,s)\,dx\,ds
\]
If we set $\vec u(x,t)=\hat{\vec u}(x,-t)$, then it becomes be a weak solution in $\sV_2(\Omega\times (0,T))^n$ of the problem \eqref{2.13vv}, and thus by the uniqueness the property c) is confirmed.
Therefore, we have shown that $\vec K(x,y,t)$ is indeed the heat kernel for \eqref{MP}.

Now, we prove the identity \eqref{eq13.26e}.
Let
\begin{equation}			\label{eq13.27c}
\hat{K}^\delta_{il}(y,x,s)=\hat{v}^i_{\delta,x,l}(y,s)=v_{\delta,x,l}^i(y,t-s)=K^\delta_{il}(y,x,t-s)
\end{equation}
and
\[
\hat{\Phi}_{x,\delta}(y,s)=\Phi_{x,\delta}(y,t-s),
\]
where $\vec v_{\delta,x,l}$ and $\Phi_{x,\delta}$ are as above.
Observe that $\hat{\vec v}_{\delta,x,l}(y,s)$ is, for any $-T<t$, a unique weak solution in $\sV^{1,2}_0(\Omega\times (-T,t))^n$ of the problem
\begin{equation}			\label{eq3.31e}
\left\{
\begin{array}{ll}
-\vec v_s- L \vec v =0 \; & \text{in }\; \Omega \times (-T, t)\\
\vec v=0 & \text{on }\;  D\times (-T, t)\\
\traction(\vec v) =0 & \text{on }\; N \times (-T, t)\\
\vec v(\cdot, t) = \hat{\Phi}_{x,\delta}\, \vec e_l &  \text{on }\; \Omega.
\end{array}
\right. 
\end{equation}
Then, similar to \eqref{eq11.05b}, we have
\begin{equation}			\label{eq3.31w}
\int_{\Omega} \hat{K}^\delta_{kl}(\cdot,x,0) \Phi_{y,\epsilon} =\int_\Omega K^\epsilon_{lk}(\cdot,y,t) \hat{\Phi}_{x,\delta}.
\end{equation}
By repeating the proof of \cite[Lemma~3.5]{CDK}, we obtain \eqref{eq13.26e} from \eqref{eq3.31w} as well as the the following representation of the mollified heat kernel:
\[
\vec K^\epsilon(x,y,t)=\int_\Omega \vec K(z,x,t)^T \Phi_{y,\epsilon}(z)\,dz.
\]
In particular, by continuity of $\vec K(\cdot,x,t)$ and \eqref{eq13.26e}, we have 
\begin{equation}		\label{eq3.786}
\lim_{\epsilon \to 0} \vec K^\epsilon(x,y,t)=\vec K(x,y,t).
\end{equation}

Now, we turn to the proof of the formula \eqref{eq10.06g}.
Let $\vec u$ be the weak solution in $\sV^{1,0}_2(\Omega\times(0,T))^n$ of the problem \eqref{eq10.08a}.
Let $X=(x,t)\in \Omega\times (0,T)$ be fixed but arbitrary and let $\hat{\vec v}_\delta=\hat{\vec v}_{\delta,x,l}$ be as in \eqref{eq13.27c}.
Then, it follows from the equations \eqref{eq10.08a} and \eqref{eq3.31e} that  for sufficiently small $\delta$, we have
\[
\int_\Omega \psi^i(y) \hat{v}_\delta^i(y,0)\,dy=\int_\Omega u^l(y,t) \hat \Phi_{\delta,x}(y)\,dy.
\]
Therefore, by using \eqref{eq13.27c}, we obtain
\begin{equation}	\label{eqn:3.56}
\int_\Omega  K^{\epsilon_\mu}_{il}(y,x,t) \psi^i(y)\,dy=\int_\Omega u^l(y,t) \hat \Phi_{\epsilon_\mu,x}(y)\,dy.
\end{equation}
By \eqref{eq11.19a} and \eqref{eq3.786} with $x$ in place of $y$, we find by the dominated convergence theorem that
\[
\lim_{\mu\to \infty} \int_\Omega  K^{\epsilon_\mu}_{il}(y,x,t) \psi^i(y)\,dy=\int_\Omega  K_{il}(y,x,t) \psi^i(y)\,dy.
\]
By Lemma~\ref{lem3a}, we find that $\vec u$ is continuous at $X=(x,t)$.
Therefore, by taking the limit $\mu\to\infty$ in \eqref{eqn:3.56} and
using \eqref{eq13.26e}  we obtain \eqref{eq10.06g}.

Finally, let $\vec u$ be a weak solution $\sV^{1,0}_2(\Omega\times(0,T))^n$ of the problem \eqref{eq10.08a} and $\phi$ be a Lipschitz function on $\Omega$ satisfying $\abs{\nabla \phi} \le K$ a.e. for some $K>0$.
Denote
\[
I(t):= \int_{\Omega} e^{2\phi}\abs{\vec u(x,t)}^2\,dx.
\]
Then $I'(t)$ satisfies for a.e. $t>0$ the differential inequality
\begin{align}
\nonumber
I'(t) & =-2\int_\Omega \left\{e^{2\phi} \sB(\vec u,\vec u)+2e^{2\phi} a^{\alpha\beta}_{ij} \frac{\partial u^j}{\partial x_\beta} \frac{\partial \phi}{\partial x_\alpha} u^i \right\}\,dx\\
\nonumber
& \le \int_\Omega \left\{-2 \kappa_1 e^{2\phi}\abs{\strain(\vec u)}^2 + 4\kappa_2 e^{2 \psi} \abs{\strain(\vec u)} \abs{\nabla \phi}\abs{\vec u} \right\}\,dx\\
\nonumber
& \le  \int_\Omega \left\{-2\kappa_1 e^{2\phi} \abs{\strain(\vec u)}^2
+ 2\kappa_1 e^{2 \phi} \abs{\strain(\vec u)}^2+ 2(\kappa_2^2/\kappa_1) K^2 e^{2\phi} \abs{\vec u}^2 \right\} \,dx\\
\label{eq10.17jw}
& \le 2(\kappa_2^2/\kappa_1) K^2 I(t).
\end{align}
Then, by repeating the argument in \cite[Section~4.4]{CDK}, we obtain the formula \eqref{eq5.23}.
The theorem is proved.
\hfill\qedsymbol

\subsection{Proof of Theorem~\ref{thm3}}
By Lemma~\ref{lem:lhp} and a remark preceding it, we observe that the conditions H1 and H3 imply the condition H2 and the condition (LB) in \cite{CDK2}.
Then, by Theorem~\ref{thm1}, the heat kernel $\vec K(x,y,t)$ exists and by using \eqref{eq10.17jw} and repeating the proof of \cite[Theorem~3.1]{CDK2} with $R_{max}=\diam \Omega$, we obtain the Gaussian bound \eqref{eq3.07ktx}.
Also, by \eqref{eq2.11sc}, for $X=(x,t)$ and $X'=(x',t')$ in $\cQ$ satisfying
\[
2\abs{X'-X}_{\sP} <r \le r_0  := \abs{X-\hat{Y}}_{\sP} \wedge \diam \Omega,
\]
we have
\begin{equation}		\label{eq17.24}
\abs{\vec K(x',y,t')-\vec K(x,y,t)}
\le C \abs{X'-X}_{\sP}^{\mu_1}\, r^{-\mu_1-(n+2)/2} \norm{\vec K(\cdot,y,\cdot)}_{\sL_2(Q_{-}(X,r)\cap \cQ)}
\end{equation}
Note that the estimate \eqref{eq3.07ktx} implies that for $0<s \le (\diam \Omega)^2$, we have
\begin{equation}					\label{eq5.18b}
\abs{\vec K(z,y,s)} \le C \left\{\abs{z-y} \wedge \sqrt{s}\right\}^{-n}.
\end{equation}
From the above the estimate, we obtain the estimate \eqref{eq22.00k} by repeating the proof of  \cite[Theorem~3.7]{CDK}.
More precisely, we consider the following three possible cases.
\begin{enumerate}[i)]
\item
Case {$\abs{x-y} \le \sqrt{t}<\diam \Omega$}:
In this case, we have
\[
r_0=\sqrt{t}=\abs{X-\hat{Y}}_{\sP}\quad\text{and}\quad\abs{x-y}^2/t \le 1.
\]
If $\abs{X'-X}_{\sP} < r_0/8$, then we take $r=r_0/4$ in \eqref{eq17.24} and use \eqref{eq5.18b} to get
\[
\abs{\vec K(x',y,t')-\vec K(x,y,t)}\le  C \abs{X'-X}_{\sP}^{\mu_1}\, r^{-n-\mu_1},
\]
which  implies \eqref{eq22.00k}.
If $r_0/8 \le \abs{X'-X}_{\sP} \le r_0/2$, then we have
\[
\abs{x'-y}\le 3r_0/2\quad\text{and}\quad r_0/2 \le \sqrt{t'} \le r_0 <\diam \Omega
\]
and thus, by \eqref{eq3.07ktx} we get
\[
\abs{\vec K(x',y,t')-\vec K(x,y,t)} \le \abs{\vec K(x',y,t')}+\abs{\vec K(x,y,t)} \le C r_0^{-n},
\]
which also implies \eqref{eq22.00k}.

\item
Case {$\sqrt{t}< \abs{x-y}$}:
In this case, $r_0=\abs{x-y} < \diam \Omega$.
Similar to \cite[Eq.~(5.22)]{CDK2}, for all $(z,s) \in Q_{-}(X,r_0/2)\cap \cQ$, we have
\begin{equation}			\label{eq5.26rv}
\abs{\vec K(z,y,s)} \le C t^{-n/2} \exp\left\{-\vartheta \abs{x-y}^2/4t\right\}.
\end{equation}
If $\abs{X'-X}_{\sP} < r_0/4$, then we take $r=r_0/2$ in \eqref{eq17.24} and use \eqref{eq5.26rv} to get
\[
\abs{\vec K(x',y,t')-\vec K(x,y,t)} \le C \abs{X'-X}_{\sP}^{\mu_1} r^{-\mu_1} t^{-n/2} \exp \left\{-\vartheta \abs{x-y}^2/4t \right\}.
\]
If $r_0/4 \le \abs{X'-X}_{\sP} \le r_0/2$, then use \eqref{eq3.07ktx} and \eqref{eq5.26rv} to get
\begin{equation*}
\abs{\vec K(x',y,t')-\vec K(x,y,t)}
\le C t^{-n/2} \exp\left\{-\vartheta \abs{x-y}^2/4t\right\}.
\end{equation*}
Therefore, we also obtain \eqref{eq22.00k} in this case.

\item
Case {$\diam \Omega \le \sqrt{t}$}:
In this case $r_0=d := \diam \Omega$, and the desired estimate \eqref{eq22.00k} becomes
\begin{equation}			\label{eq22.15}
\abs{\vec K(x',y,t')-\vec K(x,y,t)} \le C \abs{X'-X}_{\sP}^{\mu_1} \,d^{-n-\mu_1} \exp\left\{-\vartheta\abs{x-y}^2/4t\right\}.
\end{equation}
Since $t\ge d^2$, for all $(z,s)\in Q_{-}(X,r_0/2)\cap \cQ$, we have
\begin{equation}		\label{eq18.21}
\exp\left\{-\vartheta\abs{z-y}^2/s\right\}
\le e^{\vartheta/4} \exp\left\{-\vartheta\abs{x-y}^2/2t\right\}.
\end{equation}
If $\abs{X'-X}_{\sP} < r_0/4$, then we take $r=r_0/2$ in \eqref{eq17.24} and  use \eqref{eq18.21} to obtain \eqref{eq22.15}.
If $r_0/4 \le \abs{X'-X}_{\sP} \le r_0/2$, then by \eqref{eq3.07ktx} and \eqref{eq18.21}
\[
\abs{\vec K(x',y,t')-\vec K(x,y,t)}
\le C d^{-n} \exp\left\{-\vartheta \abs{x-y}^2/2t\right\}.
\]
Therefore, we also obtain \eqref{eq22.00k} in this case.
\end{enumerate}

The theorem is proved.
\hfill\qedsymbol

\subsection{Proof of Theorem~\ref{thm4}}

Assuming H1 and H2, we construct the Green's function $\vec G(x,y)$ for \eqref{MP} as follows.
Note that \eqref{eq2.03a} implies that there is a constant $\varrho$ such that for any $\vec u \in \vec V$, we have
\begin{equation}				\label{eq002v}
\norm{\vec u}_{L^2(\Omega)}\le \varrho \norm{\strain(\vec u)}_{L^2(\Omega)}.
\end{equation}
By utilizing \eqref{eq002v} and following the proof of \cite[Lemma~3.2]{DK09}, we get that for $x,y\in \Omega$ with $x\neq y$, we have
\[
\int_0^\infty \abs{\tilde{\vec K}(x,y,t)}\,dt<\infty,
\]
where $\tilde{\vec K}(\cdot,y,\cdot)$ is as in the proof of Theorem~\ref{thm1}.
We then define
\begin{equation}				\label{eq12.02b}
\vec G(x,y):= \int_0^\infty \tilde{\vec K}(x,y,t)\,dt.
\end{equation}
Then the symmetry relation \eqref{eq:E23} is an immediate consequence of \eqref{eq13.26e} once we show that $\vec G(x,y)$ is the Green's function.
We shall prove below that $\vec G(x,y)$ indeed enjoys the properties stated in Section~\ref{sec4.2}.
Denote
\[
\hat{\vec K}(x,y,t):= \int_0^t \tilde{\vec K}(x,y,s)\,ds
\]
so that we have
\[
\vec G(x,y)=\lim_{t\to \infty} \hat{\vec K}(x,y,t).
\]

\begin{lemma}			\label{lem4.10a}
The following holds uniformly for all $t>0$ and $y\in\Omega$.
\begin{enumerate}[i)]
\item
$\norm{\hat{\vec K}(\cdot,y,t)}_{L^p(B(y,d_y))} \le C_p (d_y^2+\varrho^2) d_y^{n/p-n},\quad \forall p \in \bigl[1,\frac{n+2}{n}\bigr)$.

\item
$\norm{\hat{\vec K}(\cdot,y,t)}_{L^{2(n+2)/n}(\Omega\setminus B(y,r))}
\le C(r^2+\varrho^2) r^{-\frac{n(n+4)}{2(n+2)}},\quad \forall r \in (0, d_y]$.

\item
$\norm{D\hat{\vec K}(\cdot,y,t)}_{L^p(B(y,d_y))}\le C_p (d_y^2+\varrho^2) d_y^{-1-n+n/p},\quad \forall p \in  \bigl[1,\frac{n+2}{n+1}\bigr)$.

\item
$\norm{D\hat{\vec K}(\cdot,y,t)}_{L^2(\Omega\setminus B(y,r))} \le C (r^2+\varrho^2) r^{-1-n/2},\quad \forall r \in (0, d_y]$.
\end{enumerate}
\end{lemma}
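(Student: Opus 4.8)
The plan is to obtain each of the four bounds for $\hat{\vec K}(\cdot,y,t)=\int_0^t \tilde{\vec K}(\cdot,y,s)\,ds$ by splitting the time integral at the scale dictated by the spatial region, using the pointwise/$L^p$ estimates for $\tilde{\vec K}$ already established in the proof of Theorem~\ref{thm1} (items 1)--5), together with 7), 8) of Remark~\ref{rmk3.7af}) for the ``near'' part, and an energy/decay argument for the ``far'' part in time. The key point that makes everything converge is the energy decay $I(t)=\int_\Omega |\vec u(x,t)|^2\,dx$, which for a solution of the homogeneous problem \eqref{eq10.08a} starting from data $\vec\psi$ satisfies, by \eqref{eq2.08q} and Friedrichs' inequality, $I'(t)\le -c\,\varrho^{-2} I(t)$, hence $I(t)\le e^{-ct/\varrho^2} I(0)$. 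Applied with $\vec\psi$ a mollifier this controls $\|\tilde{\vec K}(\cdot,y,s)\|_{\sL_2}$ for large $s$, and after interpolating with the gradient bound one gets integrable-in-$s$ decay; the factor $\varrho^2$ in the statement is exactly the price of this Poincaré/Korn step, which is why the bounds are stated as $(r^2+\varrho^2)$ rather than $r^2$.

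For i) and iii), I would write $\hat{\vec K}(\cdot,y,t)=\int_0^{d_y^2}+\int_{d_y^2}^{t}$. On $(0,d_y^2)$, Minkowski's integral inequality together with estimate 1) (resp.\ 3)) of Theorem~\ref{thm1} applied on annular cylinders $Q_+(\hat Y,2^{-j}d_y)\setminus Q_+(\hat Y,2^{-j-1}d_y)$, summed over $j$, gives $\|\int_0^{d_y^2}\tilde{\vec K}(\cdot,y,s)\,ds\|_{L^p(B(y,d_y))}\le C_p\, d_y^{\,2+n/p-n}$ for $p<\frac{n+2}{n}$ (resp.\ the analogous bound with exponent $-1-n+n/p$ for $Dx$); here one uses that $p<\frac{n+2}{n}$ makes the geometric series converge, exactly as in \cite[Section~4]{CDK}. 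On $(d_y^2,t)$ one uses interior parabolic regularity (Lemma~\ref{lem3a}/Lemma~\ref{lem3c}) to pass from the $\sL_2$-norm to the sup-norm of $\tilde{\vec K}$ on $B(y,d_y)$, and then the exponential energy decay $\|\tilde{\vec K}(\cdot,y,s)\|_{\sL_2(\Omega\times(s,2s))}\le C\,d_y^{-n/2}e^{-cs/\varrho^2}$ (valid for $s\ge d_y^2$, coming from \eqref{eq11.19a} as the base case and the semigroup property plus $I'\le -c\varrho^{-2}I$) to see $\int_{d_y^2}^\infty\|\tilde{\vec K}(\cdot,y,s)\|_{L^\infty(B(y,d_y))}\,ds\le C\varrho^2 d_y^{-n}$, which is dominated by the $(d_y^2+\varrho^2)d_y^{n/p-n}$ on the right since $p\ge 1$. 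Combining the two ranges gives i) and iii).

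For ii) and iv), the spatial region is now $\Omega\setminus B(y,r)$ with $r\le d_y$, so I would split the time integral at $s=r^2$. On $(0,r^2)$ we are in the region $\cQ\setminus Q_+(\hat Y,r)$, where Remark~\ref{rmk3.7af}~7)--8) give $\|\tilde{\vec K}(\cdot,y,s)\|_{\sL_{2(n+2)/n}}\le Cr^{-n/2}$ and $\tri{\tilde{\vec K}(\cdot,y,s)}\le Cr^{-n/2}$ uniformly; a Minkowski-in-$s$ argument over $(0,r^2)$ then produces the factor $r^2$ times these, i.e.\ $r^{2-n/2}$ for the $L^{2(n+2)/n}$-norm and $r^{2}\cdot r^{-1-n/2}$ type expression for the $L^2$-norm of the gradient — matching the stated exponents $-\frac{n(n+4)}{2(n+2)}$ and $-1-n/2$ after a short power count. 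On $(r^2,\infty)$ one again invokes the exponential energy decay, now in the form $\tri{\tilde{\vec K}(\cdot,y,s)}_{\cQ\setminus Q_+(\hat Y,r)}\le C r^{-n/2}e^{-c(s-r^2)/\varrho^2}$ for $s>r^2$, integrates in $s$ to pick up a factor $\varrho^2$, and applies \eqref{eq5.13d} (the global parabolic embedding) for part ii); for iv) one directly uses the gradient part of $\tri{\cdot}$. Adding the two contributions yields the $(r^2+\varrho^2)$ prefactor.

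\textbf{Main obstacle.} The delicate step is the uniform-in-$t$ exponential decay of $\tri{\tilde{\vec K}(\cdot,y,\cdot)}$ and of the sup-norm of $\tilde{\vec K}$ on the relevant regions for large time: one must combine the semigroup property of the (subtracted) heat kernel, the differential inequality $I'(t)\le -c\varrho^{-2}I(t)$ that comes from \eqref{eq2.08q} and \eqref{eq002v}, and the already-proven short-time bounds \eqref{eq11.19a}, \eqref{eq5.11sj}, while being careful in the case $D=\emptyset$ that the subtraction of $\pi_\cR(\delta_y\vec I)$ is precisely what restores the Poincaré inequality and hence the decay. This is essentially the argument of \cite[Lemma~3.2]{DK09} and \cite[Section~4]{CDK2}, adapted to the elasticity system via the second Korn inequality, so I expect the write-up of the lemma's proof to be a careful reprise of those references rather than anything genuinely new; the routine Minkowski-inequality power counting in each of i)--iv) is straightforward once the decay is in hand.
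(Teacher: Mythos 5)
The paper's proof of this lemma is a one-line citation to \cite[Lemma~3.23]{DK09}, and your proposal is a faithful reconstruction of exactly the argument that reference carries out: split the time integral at the scale set by the spatial region ($d_y^2$ for i), iii); $r^2$ for ii), iv)), control the near-time contribution via Minkowski's inequality and the short-time $\sL_p$/triple-norm bounds already proved for $\tilde{\vec K}$ (Theorem~\ref{thm1} items 1), 3), 5) and Remark~\ref{rmk3.7af} items 7), 8)), and control the far-time contribution by the exponential energy decay $I'(t)\le -c\varrho^{-2}I(t)$ coming from \eqref{eq2.08q}, \eqref{eq002v} and (in the case $D=\emptyset$) the subtraction of the rigid-motion projection that restores the Poincar\'e inequality. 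Your power counting checks out in all four cases, and your identification of the $\varrho^2$ factor as the price of the far-time decay is precisely right; the only small wrinkle is that for part iii) the far-time gradient control must go through a Caccioppoli-type $L^2$ estimate rather than the sup-norm (no pointwise gradient bound is available under H2 alone), but this is a routine adjustment and you implicitly acknowledge it by invoking the triple norm for iv). So this is the same approach as the paper's cited proof.
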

\begin{proof}
See \cite[Lemma~3.23]{DK09}.
\end{proof}

By the above lemma, elements $G_{ij}(x,y)$ of $\vec G(x,y)$ satisfy
\[
G_{ij}(\cdot,y) \in W^{1,1}(\Omega)\quad\text{and}\quad G_{ij}(\cdot, y) \in W^{1,2}(\Omega\setminus B(y,r))\;\text{ for any $r>0$}.
\]
Recall that columns of $\tilde{\vec K}(\cdot,y,t)$ are members of $\vec V$; see Lemma~\ref{lem5.18ah}.
Therefore, in the case when $D=\emptyset$, for any $\vec v \in \cR$, we have
\[
\int_\Omega \vec v(x)^T \tilde{\vec K}(x,y,t)\,dx =0
\]
and thus, we also have
\[
\int_\Omega \vec v(x)^T \vec G(x,y)\,dx =0.
\]
We have shown that $\vec G(x,y)$ satisfies the property i) in Section~\ref{sec4.2}.
For the proof of the property ii) in Section~\ref{sec4.2}, we refer to \cite[Section~3.2]{DK09}.
Finally, we show that the property iii) in Section~\ref{sec4.2} also holds.
Let $\vec f \in C^\infty(\bar\Omega)^n \cap \vec V$ and $\vec u$ be defined by the formula \eqref{choej}.
The integral \eqref{choej} is absolutely convergent by the property i)
of section \ref{sec4.2}.
Similarly, Lemma~\ref{lem4.10a} implies that
\[
\vec v(x,t) :=  \int_\Omega \hat{\vec K}(x,y,t) \vec f(y)\,dy
\]
is well defined.
Observe that
\begin{equation}		\label{eq:E29x}
\vec v(x,t)= \int_0^t \!\!\!\int_\Omega \tilde{\vec K}(x,y,s) \vec f(y)\,dy\,ds = \int_0^t \!\!\!\int_\Omega \tilde{\vec K}(x,y,t-s) \vec f(y) \,dy \,ds.
\end{equation}
Therefore, we have
\begin{align}
\label{eq:E29a}
\lim_{t\to\infty} \vec v(x,t)&=\int_\Omega \vec G(x,y) \vec f(y)\,dy=\vec u(x)\\
\label{eq:E29i}
\vec v_t(x,t)&=\int_\Omega \tilde{\vec K}(x,y,t) \vec f(y)\,dy.
\end{align}
By \eqref{ntilde}, the assumption that $\vec f \in \vec V$, and \eqref{eq13.37r}, we find from \eqref{eq:E29x} that $\vec v$ is, for any $T>0$, the weak solution in $\sV^{1,0}_2(\Omega \times (0,T))^n$ of the problem
\[
\left\{
\begin{array}{ll}
\vec v_t - L \vec v = \vec f \quad & \text{in }\; \Omega \times (0,T)\\
\vec v=0 & \text{on }\;  D\times (0,T)\\
\traction(\vec v) =0  & \text{on }\; N \times (0,T)\\
\vec v(\cdot, 0) = 0  & \text{on }\; \Omega.
\end{array} \right.
\]
By a similar reasoning, \eqref{eq:E29i} and the representation formula \eqref{eq10.06g} implies that $\vec v_t$ is, for any $T>0$, the weak solution in $\sV^{1,0}_2(\Omega\times(0,T))^n$ of the problem \eqref{eq10.08a} with $\vec \psi=\vec f$.
Then, we have (see \cite[Eq.~(3.42)]{DK09})
\begin{equation}		\label{eq:E29b}
\norm{\vec v_t(\cdot,t)}_{L^2(\Omega)} \le C e^{-\kappa_1\varrho^{-2}t}\norm{\vec f}_{L^2(\Omega)}, \quad \forall t>0.
\end{equation}
Observe that by \eqref{eq:E29x}, \eqref{eq:E29i}, and the assumption that $\vec f \in \vec V$, we have
\[
\vec v(\cdot, t) \in \vec V\quad\text{and}\quad \vec v_t(\cdot, t) \in \vec V\quad\text{for a.e. $t>0$}.
\]
Also, note that for any $\vec \phi=(\phi^1,\ldots,\phi^n)^T \in \vec V$ and for a.e. $t>0$, we have
\begin{equation}		\label{eq:E35}
\int_\Omega a^{\alpha\beta}_{ij} \frac{\partial v^j}{\partial x_\beta}(\cdot, t) \frac{\partial \phi^i}{\partial x_\alpha}\,dx= \int_\Omega f^i\phi^i \,dx -\int_\Omega v^i_t(\cdot\,t)\phi^i\,dx.
\end{equation}
Then, by setting $\vec \phi=\vec v(\cdot,t)$ in \eqref{eq:E35} and using \eqref{eq002v}, for a.e. $t>0$, we have
\begin{multline*}
\norm{\strain(\vec v(\cdot,t))}_{L^2(\Omega)}^2 \le C \left(\norm{\vec f}_{L^2(\Omega)}+\norm{\vec v_t(\cdot,t)}_{L^2(\Omega)}\right) \norm{\vec v(\cdot,t)}_{L^2(\Omega)} \\
\le C \norm{\vec f}_{L^2(\Omega)} \norm{\strain(\vec v(\cdot,t))}_{L^2(\Omega)},
\end{multline*}
where we have used \eqref{eq:E29b}.
Therefore, by \eqref{eq2.03a}, for a.e. $t>0$, we have
\[
\norm{\vec v(\cdot,t )}_{W^{1,2}(\Omega)} \le C \norm{\vec f}_{L^2(\Omega)}.
\]
Then, by the weak compactness of the space $W^{1,2}(\Omega)^n$ together with the fact that $\vec V$ is weakly closed in $W^{1,2}(\Omega)^n$, we find that there is a sequence $\set{t_m}_{m=1}^\infty$ tending to infinity and $\tilde{\vec u} \in \vec V$ such that
\[
\vec v(\cdot, t_m) \rightharpoonup \tilde{\vec u}\quad\text{weakly in }\;W^{1,2}(\Omega)^n.
\]
By \eqref{eq:E29a}, we must have $\vec u=\tilde{\vec u}\in \vec V$ and thus, for all $\vec \phi \in \vec V$, we get
\begin{equation}
\label{eq:E36}
\lim_{m\to\infty}\int_\Omega a^{\alpha\beta}_{ij} \frac{\partial v^j}{\partial x_\beta}(\cdot, t_m) \frac{\partial \phi^i}{\partial x_\alpha}\,dx = \int_\Omega a^{\alpha\beta}_{ij} \frac{\partial u^j}{\partial x_\beta} \frac{\partial \phi^i}{\partial x_\alpha}\,dx.
\end{equation}
Then, by \eqref{eq:E36}, \eqref{eq:E29b}, and \eqref{eq:E35}, for any $\vec \phi \in \vec V$, we obtain
\[
\int_\Omega a^{\alpha\beta}_{ij} \frac{\partial u^j}{\partial x_\beta} \frac{\partial \phi^i}{\partial x_\alpha}\,dx= \int_\Omega f^i \phi^i\,dx,
\]
which shows $\vec u$ is a weak solution in $\vec V$ of the problem \eqref{hee}; see the remark that appears above Theorem~\ref{thm4}.
Therefore, we verified that $\vec G(x,y)$ defined by the formula \eqref{eq12.02b} also satisfies the property iii) in Section~\ref{sec4.2}, and thus it is indeed the Green's function for \eqref{MP}.

Next, we assume H3 instead of H2 and proceed to prove the second part of the theorem.
In the rest of the proof, we shall denote
\[
{d}:= \diam\Omega.
\]
By Theorem~\ref{thm3}, we have the Gaussian bound \eqref{eq3.07ktx}.
In particular,  for $X=(x,t)\in \cQ$ satisfying $\sqrt{t}\le \diam \Omega$, we have
\begin{equation}					\label{eq618z}
\abs{\tilde{\vec K}(x,y,t)} \le C\abs{X-\hat{Y}}_{\sP}^{-n}.
\end{equation}
Similar to \cite[Eq.~(6.17)]{CDK2}, we have
\begin{equation}							\label{eq12.27g}
\abs{\tilde{\vec K}(x,y,t)} \le C r^{-n}e^{-\kappa_1\varrho^{-2} (t-2r^2)},\quad 
 t \ge 2r^2,\quad 0<r\le d.
 \end{equation}
We set $r := \frac{1}{2}\min(\varrho, d)$.
If $0<\abs{x-y} \le  r$, then by \eqref{eq12.02b}, we have
\begin{equation}		\label{eq6.21ee}
\abs{\vec G(x,y)} \le \int_0^{\abs{x-y}^2} + \int_{\abs{x-y}^2}^{2r^2}+\int_{2 r^2}^\infty \abs{\tilde{\vec K}(x,y,t)}\,dt =: I_1+I_2+I_3.
\end{equation}
It then follows from \eqref{eq618z} and \eqref{eq12.27g} that
\begin{align*}
I_1 &\le C\int_0^{\abs{x-y}^2}\abs{x-y}^{-n}\,dt \le C\abs{x-y}^{2-n},\\
I_2&\le C\int_{\abs{x-y}^2}^{2r^2}t^{-n/2}\,dt\le
\begin{cases}  C+C\ln (r/\abs{x-y}) & \text{if $n=2$,}
\\
C\abs{x-y}^{2-n} &\text{if $n\ge 3$.}
\end{cases} \\
I_3&\le C\int_{2r^2}^\infty r^{-n} e^{-\kappa_1\varrho^{-2} (t-2r^2)}\,dt\le C \varrho^2 r^{-n}.
\end{align*}
Combining all together we get that if $0<\abs{x-y}\le r$, then
\begin{equation}			\label{eq15.08b}
\abs{\vec G(x,y)} \le
\begin{cases}  C\left(1+(\varrho/r)^2+ \ln (r/d)+ \ln (d/\abs{x-y}) \right) & \text{if $n=2$,}\\
C\left(1+(\varrho/r)^2\right)\abs{x-y}^{2-n} &\text{if $n\ge 3$.}
\end{cases}
\end{equation}
In the case when $\abs{x-y} \ge r$, we estimate by \eqref{eq618z} and \eqref{eq12.27g} that
\begin{multline}		\label{eq6.23ww}
\abs{\vec G(x,y)} \le \int_0^{2r^2}\abs{\tilde{\vec K}(x,y,t)}\,dt+ \int_{2r^2}^\infty \abs{\tilde{\vec K}(x,y,t)}\,dt\\
\le C \int_0^{2r^2} \!\!\! r^{-n}\,dt + C \int_{2r^2}^\infty \!\!\! \ r^{-n} e^{-\kappa_1\varrho^{-2}(t-2r^2)}\,dt  \le Cr^{2-n}+ C \varrho^2 r^{-n}.
\end{multline}
By \eqref{eq15.08b} and \eqref{eq6.23ww}, we get \eqref{eq13.03k} and \eqref{eq13.03l}.
Finally, we turn to the proof of the estimate \eqref{eq15.13g}.
Because we assume H3, the conclusions of Theorem~\ref{thm3} are valid.
By \eqref{eq22.00k} and the definition \eqref{ntilde}, if $\abs{X-\hat{Y}}_{\sP} \le d$, then we have
\begin{multline}							\label{eq618ak}
\abs{\tilde{\vec K}(x',y,t)-\tilde{\vec K}(x,y,t)}\le \abs{\vec K(x',y,t)-\vec K(x,y,t)} + C\abs{x-x'} \\
\le C {\abs{x'-x}}^{\mu_1} \abs{X-\hat{Y}}_{\sP}^{-n-\mu_1}\quad\text{whenever }\;\abs{x-x'}<\tfrac{1}{2}\abs{x-y}.
\end{multline}
We claim that for $0<r \le d$ and $t>3r^2$, we have
\begin{equation}							\label{eq12.39p}
\abs{\tilde{\vec K}(x',y,t)-\tilde{\vec K}(x,y,t)} \le C \abs{x'-x}^{\mu_1}  r^{-n-\mu_1} e^{-\kappa_1\varrho^{-2} (t-2r^2)}
\end{equation}
whenever $\abs{x-x'}<\tfrac{1}{2}\abs{x-y}$.
Assume the claim \eqref{eq12.39p} for the moment.
Similar to \eqref{eq6.21ee}, in the case when $0<\abs{x-y} \le r:=  \frac{1}{2}\min(\varrho,{d})$, we get
\begin{multline*}
\abs{\vec G(x',y)-\vec G(x,y)} \le \\
\int_0^{\abs{x-y}^2} + \int_{\abs{x-y}^2}^{3r^2}+\int_{3 r^2}^\infty \abs{\tilde{\vec K}(x',y,t)-\tilde{\vec K}(x,y,t)}\,dt =:   I_1+I_2+I_3.
\end{multline*}
It follows from \eqref{eq618ak} that
\begin{align*}
I_1  &\le  C \abs{x'-x}^{\mu_1} \int_0^{\abs{x-y}^2} \abs{x-y}^{-n-\mu_1}\,dt \le C \abs{x'-x}^{\mu_1} {\abs{x-y}}^{2-n-\mu_1},\\
I_2  & \le C \abs{x'-x}^{\mu_1} \int_{\abs{x-y}^2}^\infty t^{-n/2-\mu_1/2}\,dt \le C \abs{x'-x}^{\mu_1} \abs{x-y}^{2-n-\mu_1}.
\end{align*}
Also, by \eqref{eq12.39p}, we obtain
\begin{multline*}
I_3 \le C r^{-n-\mu_1} \abs{x'-x}^{\mu_1} \int_{2r^2}^\infty  e^{-\kappa_1\varrho^{-2} (t-2r^2)}\,dt \\
\le  C \varrho^2 r^{-n-\mu_1} \abs{x'-x}^{\mu_1}
\le C \left(\frac{\varrho}{r}\right)^2 \abs{x'-x}^{\mu_1} \abs{x-y}^{2-n-\mu_1}.
\end{multline*}
Combining the above estimates together, we obtain \eqref{eq15.13g} when $0<\abs{x-y} \le r$.
In the case when $\abs{x-y} \ge r$, by using \eqref{eq618ak} and \eqref{eq12.39p}, we estimate
\begin{align*}
\abs{\vec G(x',y)-\vec G(x,y)} & \le \int_0^{4r^2}+\int_{4r^2}^\infty \abs{\tilde{\vec K}(x',y,t)-\tilde{\vec K}(x,y,t)}\,dt \\
& \le C \abs{x'-x}^{\mu_1} r^{2-n-\mu_1} +C \varrho^2 r^{-n-\mu_1} \abs{x'-x}^{\mu_1} \\
& \le  C(1+(\varrho/r)^2) \abs{x'-x}^{\mu_1} (r/{d})^{2-n-\mu_1} {d}^{2-n-\mu_1} \\
& \le  C(1+(\varrho/r)^2)  (r/{d})^{2-n-\mu_1}  \abs{x'-x}^{\mu_1} \abs{x-y}^{2-n-\mu_1}.
\end{align*}
Therefore, we also obtain \eqref{eq15.13g} when $\abs{x-y} \ge r$.
It only remains for us to prove the claim \eqref{eq12.39p}. The strategy is similar to the proof of \eqref{eq22.00k}.
Note that each column of $\tilde{\vec K}(\cdot,y,t)$ is a weak solution in $\sV_2(Q)$ of $\vec u_t - L \vec u=0$ provided that $Q\Subset \cQ\setminus \set{\hat{Y}}$.
Therefore, similar to \eqref{eq17.24}, for $0<r\le d$ and $t>3r^2$, we have
\[
\abs{\tilde{\vec K}(x',y,t)-\tilde{\vec K}(x,y,t)} \le C \abs{x'-x}^{\mu_1} r^{-n/2-1-\mu_1} \norm{\tilde{\vec K}(\cdot,y,\cdot)}_{\sL_2(Q_{-}(X,r)\cap \cQ)}
\]
whenever $\abs{x-x'} < r/2$.
Then by \eqref{eq12.27g}, we obtain \eqref{eq12.39p}.
\hfill\qedsymbol

\section{Appendix}

\subsection{Proof of Lemma~\ref{lem:lhp}}
\label{proof:lem:lhp}
We first show H3 implies H2.
Suppose $\vec u$ is a weak solution of $L \vec u=0$ in $B(x_0,r)\subset \Omega$.
By a well-known theorem of  Morrey \cite[Theorem~3.5.2]{Morrey}, we have
\[
[\vec u]_{\mu_0; B(x_0,r/2)}^2 \le C r^{2-n-2\mu_0}\norm{D \vec u}_{L^2(B(x_0,3r/4))}^2.
\]
By using the second Korn inequality, we get  Caccioppoli's inequality for $\vec u$, that is, for any $\vec \lambda \in \bR^n$ and $0<\rho \le  r/2$, we have
\[
\int_{B(x_0,\rho)} \abs{D\vec u}^2 \,dx \le C\rho^{-2} \left(1+(\diam \Omega)^2\right) \int_{B(x_0,2\rho)} \abs{\vec u-\vec \lambda}^2\,dx.
\]
Then, we get the estimate \eqref{IH} from \eqref{dirichlet}.

Next, we prove the estimate \eqref{eq2.11sc}.
We first establish a global version of \cite[Lemma~4.3]{Kim}.
For $\lambda \ge 0$, we denote by $L^{2,\lambda}(U)$ the linear space of functions $u \in L^2(U)$ such that
\[
\norm{u}_{L^{2,\lambda}(U)}=\left\{\sup_{\substack{x\in U\\0<r<\diam U}} r^{-\lambda} \int_{B(x,r)\cap U} \abs{u}^2\,dx\right\}^{1/2}<\infty.
\]
\begin{lemma}		\label{lem6.13apx}
Let $\vec f \in L^{2,\lambda}(B\cap\Omega)^n$, where $\lambda \ge 0$ and $B=B(x_0,R)$ with $x_0\in \bar\Omega$ and $0<R < \diam \Omega$.
Suppose $\vec u$ is a weak solution of
\[
\left\{
\begin{array}{lll}
L \vec u = \vec f  &\text{in}& B \cap \Omega, \\
\vec u=0 & \text{on}& B \cap D,\\
\traction(\vec u) =0  & \text{on} & B \cap N.
\end{array} \right. 
\]
If we assume H3, then, for $0 \le \gamma<\gamma_0 := \min(\lambda+4, n+2\mu_0)$, we have
\begin{equation}		\label{eq16.15ss}
r^{2-\gamma}\int_{B(x,r)\cap \Omega} \abs{D \vec u}^2\,dx \le C \left( \int_{B\cap \Omega} \abs{D \vec u}^2\,dx + \norm{\vec f}_{L^{2,\lambda}(B\cap \Omega)}^2 \right)
\end{equation}
uniformly for all $x \in \frac{1}{2}B \cap \bar \Omega$ and $0<r \le R/2$.
Here, $\frac{1}{2}B=B(x_0, R/2)$ and $C$ is a constant depending only on $n, \kappa_1, \kappa_2, \mu_0, A_0, \lambda, \gamma$, and $\Omega$.
We may take $\gamma=\gamma_0$ in \eqref{eq16.15ss} if $\gamma_0 <n$.
Moreover, if $\gamma<n$, then $\vec u \in L^{2,\gamma}(\frac{1}{2}B\cap \Omega)$ and
\begin{equation}			\label{eq16.16kk}
\norm{\vec u}_{L^{2,\gamma}(\frac{1}{2}B\cap \Omega)} \le C \left( \norm{\vec u}_{L^2(B\cap \Omega)} + \norm{D\vec u}_{L^2(B\cap \Omega)} + \norm{\vec f}_{L^{2,\lambda}(B\cap \Omega)}\right).
\end{equation}
\end{lemma}
\begin{proof}
Let $x\in B(x_0,R/2) \cap \partial\Omega$ and $0<r\le r_0 \wedge(R/2)$, where $r_0>0$ is such that
\begin{equation}	\label{app6.17}
\Omega\setminus \bigcup_{x\in \partial\Omega} B(x,r_0) \supset B(y,\delta_0)\quad\text{for some $y\in \Omega$ and $\delta_0>0$}.
\end{equation}
Denote
\[
\tilde N := N\cap B(x,r),\quad \tilde D := \partial(B(x,r)\cap \Omega) \setminus \tilde N
\]
and let $\vec v$ be a unique weak solution of the problem
\[
L\vec v = \vec f\;\text{ in }\; B(x,r)\cap \Omega,\quad \vec v =0\;\text{ on }\;\tilde D,\quad \traction(\vec v) =0\;\text{ on }\; \tilde N.
\]
By testing with $\vec v$  and using the Sobolev inequality, we get
\[
\int_{B(x,r)\cap \Omega} \abs{\strain(\vec v)}^2\,dy \le C \left( \int_{B(x,r)\cap \Omega} \abs{\vec f}^q\,dy\right)^{1/q} \left( \int_{B(x,r)\cap \Omega} \abs{\vec v}^{p}\,dy\right)^{1/p},
\]
where $1/p+1/q=1$ and $q=2n/(n+2)$ if $n \ge 3$ and $q=2/(\alpha+1)$ if $n=2$, where $\alpha \in [\mu_0,1)$.
 We extend $\vec v$ to $\Omega$ by setting $\vec v=0$ in $\Omega\setminus B(x,r)$.
Note that $\vec v\in W^{1,2}(\Omega)^n$ and $D\vec v=0$ on $\Omega\setminus B(x,r)$.
By the Sobolev inequality and H\"{o}lder's inequality, we then obtain
\[
\norm{\strain(\vec v)}_{L^2(\Omega)}^2 \le C r^\alpha \norm{\vec f}_{L^2(B\cap \Omega)}\norm{\vec v}_{W^{1,2}(\Omega)}.
\]
By the assumption \eqref{app6.17}, we have
\[
c \norm{D \vec v}_{L^2(\Omega)} \le \norm{\strain(\vec v)}_{L^2(\Omega)}, \quad \norm{\vec v}_{W^{1,2}(\Omega)} \le C \norm{D \vec v}_{L^2(\Omega)},
\]
for some constants $c, C>0$ independent of $\vec v$, $x$, and $r$.
Therefore, we have
\[
\int_{B(x,r)\cap \Omega} \abs{D \vec v}^2\,dy \le C r^{\lambda+2\alpha} \norm{\vec f}_{L^{2,\lambda}(B\cap \Omega)}^2.
\]
Let $\vec w:= \vec u-\vec v$.
Then, $\vec w$ is a weak solution of $L \vec w =0$ in $B(x,r)\cap \Omega$.
For $0<\rho <r$, we have
\begin{align*}
\int_{B(x,\rho)\cap \Omega} \abs{D\vec u}^2\,dy &\le 2 \int_{B(x,\rho)\cap \Omega} \abs{D\vec v}^2\,dy +  2 \int_{B(x,\rho)\cap \Omega} \abs{D\vec w}^2\,dy\\
&\le C  r^{\lambda+2\alpha} \norm{\vec f}_{L^{2,\lambda}(B\cap \Omega)}^2+C\left(\frac{\rho}{r}\right)^{n-2+2\mu_0} \int_{B(x,r)\cap \Omega} \abs{D\vec u}^2\,dy.
\end{align*}
Thus, by \cite[Lemma~2.1, p. 86]{Gi83}, we obtain \eqref{eq16.15ss} for $x\in B(x_0,R/2)\cap \partial\Omega$ and $0<r\le r_0 \wedge(R/2)$.
The general case of the estimate \eqref{eq16.15ss} is obtained by combining this case and the interior case, which is already covered by \cite[Lemma~4.3]{Kim}.
The estimate \eqref{eq16.16kk} is an easy consequence of the estimate \eqref{eq16.15ss} and the Poincar\'{e}'s inequality; see \cite[Proposition~1.2, p. 68]{Gi83}.
\end{proof}
Let $\vec u$ be a weak solution in $\sV_2(Q\cap \cQ)^n$ of \eqref{eq2.10tt}, where $Q=Q_{-}(X_0,r)$ with $X_0=(x_0,t_0) \in\cQ$ and $0<r \le \sqrt{t_0}\wedge \diam \Omega$.
Then, we have (see \cite[Lemma~4.2]{Kim})
\begin{align*}
\esssup_{t_0-(r/2)^2\le t\le t_0} \int_{B(x_0,r/2) \cap \Omega} \abs{\vec u_t}^2(x,t)\,dx &\le C r^{-6} \iint_{Q\cap \cQ} \abs{\vec u}^2\,dX,\\
\esssup_{t_0-(r/2)^2\le t\le t_0} \int_{B(x_0,r/2) \cap \Omega} \abs{D_x \vec u}^2(x,t)\,dx &\le C r^{-4} \iint_{Q\cap \cQ} \abs{\vec u}^2\,dX.
\end{align*}
Also, we have (cf. \cite[Lemma~8.6]{CDK2})
\[
\int_{Q\cap \cQ} \abs{\vec u -\vec \lambda}^2\,dX \le Cr^2 \int_{Q\cap \cQ} \abs{D_x \vec u}^2\,dX;\quad \vec \lambda:= \fint_{Q\cap \cQ} \vec u\,dX.
\]
By using Lemma~\ref{lem6.13apx} and the above inequalities, we repeat the proof of \cite[Theorem~3.3]{Kim} with obvious modifications to get
\begin{equation}		\label{lex}
[\vec u]_{\mu_1,\mu_1/2; \frac{1}{2}Q \cap \cQ} \le A_1 r^{-\mu_1-(n+2)/2} \norm{\vec u}_{\sL_2(Q\cap \cQ)}.
\end{equation}
Finally, we show that the above estimate and the condition \eqref{typea}  implies
\begin{equation}		\label{lex2}
\abs{\vec u}_{0; \frac{1}{2}Q \cap \cQ} \le A_1 r^{-(n+2)/2} \norm{\vec u}_{\sL_2(Q\cap \cQ)}.
\end{equation}
For $Y\in \frac{1}{4}Q\cap \cQ$ and $Z\in \cQ$ satisfying $\abs{Z-Y}_{\sP}<r/4$, we have
\[
\abs{\vec u(Y)}^2 \le 2^{1-4\mu_1} r^{2\mu_1} [\vec u]_{\mu_1,\mu_1/2;,\frac{1}{2}Q\cap\cQ}^2+2\abs{\vec u(Z)}^2.
\]
By taking the average over $Z$ and using \eqref{lex}, we get
\[
\abs{\vec u(Y)}^2 \le \left(2^{1-4\mu_1}A_1^2 +\beta^{-1} 2^{2n+5}\right) r^{-n-2}\norm{\vec u}_{\sL_2(Q\cap \cQ)}^2.
\]
By adopting a covering argument, we obtain \eqref{lex2}, and thus \eqref{eq2.11sc}.
\hfill\qedsymbol

\subsection{Proof of Lemma~\ref{lem3a}}
\label{proof:lem3a}
Recall that $\tilde{\vec u}$ is the weak solution of the problem \eqref{eq5.06jj}, where $\vec f$ is understood as an element of $L^1((a,b);\vec V')$.
We define
\[
\tilde{\vec f}(x,t) := \vec f(x,t)- \pi_\cR[\vec f(\cdot,t)](x),\quad (x,t)\in Q:= \Omega\times (a,b).
\]
Note that $\vec f = \tilde{\vec f}$ in $L^1((a,b); \vec V')$.
Also, for all $t\in(a,b)$, we have
\[
\norm{\pi_\cR\vec f(\cdot, t)}_{L^\infty(\Omega)} \le C \norm{\pi_\cR\vec f(\cdot, t)}_{L^2(\Omega)} \le C \norm{\vec f(\cdot,t)}_{L^2(\Omega)}\le C \abs{\Omega}^{1/2} \norm{\vec f}_{\sL_\infty(Q)},
\]
where we used the fact that all norms in $\cR$ are equivalent and that $\pi_\cR$ is an orthogonal projection.
Therefore, we have
\begin{equation}			\label{eq16.00}
\norm{\tilde{\vec f}}_{\sL_\infty(Q)} \le C \norm{\vec f}_{\sL_\infty(Q)}.
\end{equation}
Let $\vec u$ be the weak solution in $\sV_2^{1,0}(Q)^n$ of the problem
\[
\left\{\begin{array}{ll}
-\vec u_t - L \vec u =\tilde{\vec f} \quad & \text{in }\; \Omega \times (a,b)\\
\vec u=0 & \text{on }\;  D\times (a,b)\\
\traction(\vec u) =0  & \text{on }\; N \times (a,b)\\
\vec u(\cdot, b) = 0 &  \text{on }\; \Omega.
\end{array}
\right. 
\]
Then it is easy to see that $\vec u(\cdot,t) \in \vec V$ for a.e. $t\in (a,b)$ and thus $\vec u$ is a weak solution of the problem \eqref{eq5.06jj}.
Therefore, by the uniqueness, we conclude that $\tilde{\vec u} = \vec u$.
In particular, $\tilde{\vec u}$ is a weak solution in $\sV_2(Q)^n$ of $-\vec u_t+ L\vec u= \tilde{\vec f}$.
Because of the inequality \eqref{eq16.00}, it is then enough to prove the second part of the lemma and we refer to \cite[Section~3.2]{CDK} for its proof.
\hfill\qedsymbol

\end{document}